\newtheorem{theorem}{Theorem}[section]
\newtheorem{proposition}[theorem]{Proposition}
\newtheorem{lemma}[theorem]{Lemma}
\newtheorem{corollary}[theorem]{Corollary}
\theoremstyle{definition}
\newtheorem{definition}[theorem]{Definition}
\theoremstyle{remark}
\newtheorem{remark}[theorem]{Remark}
\theoremstyle{remark}
\theoremstyle{remark}
\def\le{\leqslant}
\begin{document}

\title{On Characters of Inductive  Limits of  Symmetric Groups
}


\author{ {\bf Artem Dudko}  \\
                    University of Toronto,  Toronto,  Canada  \\
          artem.dudko@utoronto.ca \\
         {\bf Konstantin Medynets} \\
         Ohio State University, Columbus, Ohio, USA \\
         medynets@math.ohio-state.edu }

\date{}

\maketitle

\begin{abstract}
In the paper we completely  describe characters  (central positive-definite functions) of simple locally finite groups that can be represented as inductive limits of (products of) symmetric groups under block diagonal embeddings. Each such  group $G$ defines an infinite graph (Bratteli diagram) that encodes the  embedding scheme. The group $G$ acts on the space $X$  of infinite paths of the associated Bratteli diagram by changing initial edges of paths.  Assuming the  finiteness of the set of ergodic measures for the system $(X,G)$, we establish that each indecomposable character $\chi :G \rightarrow \mathbb C$ is  uniquely defined by the formula $\chi(g) = \mu_1(Fix(g))^{\alpha_1}\cdots \mu_k(Fix(g))^{\alpha_k}$, where $\mu_1,\ldots,\mu_k$ are $G$-ergodic measures, $Fix(g) = \{x\in X: gx = x\}$, and $\alpha_1,\ldots,\alpha_k\in  \{0,1,\ldots,\infty\}$.  
 We illustrate our results  on the group of rational permutations of the unit interval.
\end{abstract}

\noindent{\small MSC: 20C32,  20B27,  37B05. \\
Key words: infinite symmetric groups, characters,  factor representations,  locally finite groups, Cantor minimal systems, full groups, Bratteli diagrams.}

\section{Introduction} The representation theory of groups, originally motivated by the study of finite groups, eventually turned into a powerful framework for formulating and solving problems from numerous areas of mathematics and theoretical physics.  A later development of the representation theory of infinite groups connected the group theory to such  mathematical areas as  the free probability theory, random matrices, algebraic geometry, and  many others.  This was mainly   established via the group  $S(\infty)$ of all finitary permutations of the set of natural numbers, see the discussion and references in  the monograph \cite{Kerov:book}.

The classification of characters for the group $S(\infty)$ was obtained   by Thoma \cite{thoma:1964} in 1964. Later  Kerov and Vershik \cite{vershik_kerov:1981_1, vershik_kerov:1981} showed that  indecomposable characters of $S(\infty)$ can be found as  weak limits of characters of groups $S(n)$.  They called this construction the {\it ergodic approach or the asymptotic theory of characters}. We note that the application of the ergodic approach in \cite{vershik_kerov:1981} provided a very neat interpretation of Thoma's parameters as  frequencies of rows and columns in the  Young diagram.
We also mention     a recent preprint \cite{gohm_kostler:2010}, where the authors give a purely operator algebra proof of the classification of characters for $S(\infty)$, and the paper \cite{Okounkov:1997} by Okounkov, who used  the Olshanski semigroup approach to give another proof  of Thoma's theorem.

In the present paper we are interested in the theory of characters for groups that can be represented as inductive limits of  symmetric groups under diagonal embeddings. One of the simplest examples of such a group can be given as follows.
 Consider the sequence of sets $X_n = \{0,\ldots,2^n-1\}$ and the symmetric groups $S(X_n)$.  The set $X_{n+1}$ can be represented as a disjoint union of two copies of $X_n$. Then each element $s\in S(X_n)$  can be embedded into $S(X_{n+1})$ by making it act on each copy of $X_n$ as $s$.  Define the group $S(2^\infty)$  as the inductive limit  of groups $S(X_n)$ under this embedding scheme.  The characters of $S(2^\infty)$ were completely described by the first-named author in \cite{dudko:2011}.     We would like to mention that the technique used  in \cite{dudko:2011}, which is further developed in the present paper, is significantly different from the ergodic approach of Vershik and Kerov and is solely based on the study of  actions of $S(2^\infty)$ and {\it not} on the analysis of weak limits of characters.

 Characters of  groups similar to $S(2^\infty)$ were also studied by Leinen and Puglisi in \cite{LeinenPuglisi:2004}, who applied a version of the ergodic approach to describe characters for inductive limits of alternating groups $A(X_n)$  with the embedding scheme allowing the set $X_{n+1}$ to be covered by multiple  copies of $X_n$.

The group   $S(2^\infty)$ can be represented as the group of homeomorphisms of $\{0,1\}^\mathbb N$ preserving the tails of sequences or as the  group permuting initial segments of infinite paths of  the Bratteli diagram corresponding to the 2-odometer. Many groups covered in \cite{LeinenPuglisi:2004} can be also seen as groups of permutations of Bratteli diagrams (corresponding to odometers).

Following this observation,   we will consider groups acting on  initial segments of infinite paths of {\it general} Bratteli diagram,  see Definition \ref{DefinitionFullGroup}. We will refer to these groups as {\it full groups}\footnote{These groups are also called AF (approximately finite) full groups, see \cite{matui:2006}.} of Bratteli diagrams.
 Algebraically, full groups are  inductive limits of products of symmetric groups under  block diagonal embeddings (similar to those used in $S(2^\infty)$), see Section \ref{SubsectionInductiveLimits} for the details.    We notice that full groups are countable and locally finite. The algebraic structure of full groups and groups isomorphic to them were studied in  \cite{kroshko_sushchansky:1998, lavrenyuk_sushchanskii:2009,LavrenyukNekrashevych:2007,matui:2006,bezuglyi_medynets:2008,zalesski:1995}. A  discussion of the connection between full groups and the classification of general locally finite groups can be found in \cite{lavrenyuk_sushchanskii:2009,LavrenyukNekrashevych:2007}.

 Consider the full group $G$ of a Bratteli diagram $B$. Then the group $G$ acts on the space $X$ of infinite paths of $B$. The set $X$ can be endowed with a topology that turns it into a Cantor set. We will assume that the group $G$ is simple, which implies that the dynamical system $(X,G)$ is minimal (every $G$-orbit is dense). Fix an indecomposable character  $\chi:G\rightarrow \mathbb C$, i.e. a central positive-definite function that cannot be represented as a convex combination of similar functions. The main goal of the paper is to find a closed-form expression for $\chi$. Our approach to this problem is based on the study of action of the group $G$ on the path-space $X$. In our proofs, we  use some ideas  of \cite{dudko:2011}.

Applying the GNS-construction to the character $\chi$, one can find a   finite-type factor representation\footnote{The term ``finite-type'' means that the von Neumann algebra generated by the representation of the group $G$ is of finite type, i.e. either of type $II_1$ or $I_n$ with $n<\infty$, see the details of this classifications in \cite[Chapter 5]{Tak}.}  $\pi$ of the group $G$ in a Hilbert space $H$ and a vector $\xi\in H$ such that $\chi(g)  = (\pi(g)\xi,\xi)$, see the details in Section \ref{SubsectionCharacters}. Denote by $\mathcal M_\pi$ the $W^*$-algebra generated by the unitary operators $\pi(G)$.  Observe that $\mathcal M_\pi$ has a unique normalized trace $tr$ given by $tr(T) = (T\xi,\xi)$.

 The first main result of our paper is the proof of the following theorem establishing that the value of the character $\chi$ at the element $g\in G$ depends only on the set $supp(g) = \{x\in X : g(x)\neq x\}$. For a clopen set $A$, denote by $G(A)$ the subgroup of all elements $g\in G$ supported by the set $A$.  Let $P^A$ stand for the projection on the subspace $\{h\in H : \pi(g)h = h\mbox{ for all }g\in G(A)\}$.

\begin{theorem}\label{TheoremTracePreliminaries} Let $G$ be the simple full group of a Bratteli diagram. Then for every $g\in G$, we have that $P^{supp(g)}\in\mathcal M_\pi$ and $$\chi(g) = tr(P^{supp(g)}).$$
\end{theorem}

Observe that  groups $G(A)$   play a similar role in $G$ as the finite symmetric  groups $S(n)$ do in $S(\infty)$.

 Assume  that  the dynamical system $(X,G)$ has only a finitely many of ergodic measures $\mu_1,\ldots,\mu_k$. Under this assumption,  we will establish the following result.

 \begin{theorem}\label{TheoremFormulaPreliminaries} For each indecomposable character $\chi$  of the simple full group $G$ there exist parameters $\alpha_1,\ldots,\alpha_k\in \{0,1,\ldots,\infty\}$  such that $$\chi(g) = \mu_1(Fix(g))^{\alpha_1}\cdots \mu_k(Fix(g))^{\alpha_k}$$ for every $g\in G$. Here $Fix(g) = \{x\in X : g(x) = x\}$.
  \end{theorem}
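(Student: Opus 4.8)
The plan is to pass everything through Theorem~\ref{TheoremTracePreliminaries}, which reduces the problem to the study of the single set function $d(A):=tr(P^{A})$ on clopen sets $A\subseteq X$: by that theorem $\chi(g)=d(supp(g))$, and since $\mu_{i}(Fix(g))=1-\mu_{i}(supp(g))$, the assertion is exactly that $d(A)=\prod_{i}(1-\mu_{i}(A))^{\alpha_{i}}$ for the right exponents. Two elementary properties of $d$ come for free: monotonicity --- $A\subseteq B$ gives $G(A)\subseteq G(B)$, hence the range of $P^{B}$ is contained in that of $P^{A}$, so $P^{B}\le P^{A}$ and $d(B)\le d(A)$ --- and the normalization $d(\varnothing)=1$ (as $G(\varnothing)=\{e\}$ and $P^{\varnothing}=I$).

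The second step is to show that $d(A)$ depends only on the vector $\bar\mu(A):=(\mu_{1}(A),\dots,\mu_{k}(A))$ and is coordinatewise non-increasing in it. The key input is a purely Bratteli-diagram fact about full groups with only finitely many ergodic (equivalently, invariant) measures: if $\mu_{i}(A)\le\mu_{i}(B)$ for all $i$ then $h(A)\subseteq B$ for some $h\in G$, so in particular $\bar\mu(A)=\bar\mu(B)$ forces $h(A)=B$ for some $h\in G$. (With finitely many ergodic measures all of them are simultaneously visible on the vertices of a suitably telescoped diagram, which makes this a bookkeeping exercise; I expect it appears as a separate lemma.) Because $\chi$ is central, $\pi(h)P^{A}\pi(h)^{-1}=P^{h(A)}$, so $d$ is conjugation invariant; combined with the inclusion statement this produces a well-defined, coordinatewise non-increasing function $D$ with $d(A)=D(\bar\mu(A))$, defined on the set of realizable measure vectors, which is dense in $[0,1]^{k}$ by mutual singularity of the $\mu_{i}$. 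A bounded monotone function on a dense subset of $[0,1]^{k}$ has one-sided limits everywhere, so $D$ may be regarded as defined on all of $[0,1]^{k}$, with $D(\bar 0)=1$.

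The heart of the argument is a multiplicativity property of $D$. I would fix a clopen set with measure vector $\bar t$ and, for $N$ in a cofinal family, split it into clopen pieces $A_{1},\dots,A_{N}$ of equal measure vector $\bar t/N$; Step~2 then provides a copy of $S(N)$ inside $G$ that cyclically permutes the $A_{j}$, so that $G$ contains the wreath product $G(A_{1})\wr S(N)$, and the projections $P^{A_{1}},\dots,P^{A_{N}}$ pairwise commute. Restricting $\chi$ to these wreath products and letting $N\to\infty$, the subgroups $G(A_{j})$ are exchangeable, so a noncommutative de Finetti theorem of Kostler--Speicher type applies: an $S(\infty)$-exchangeable family of $W^{*}$-subalgebras of a tracial von Neumann algebra is, conditionally on its tail algebra, tensor-independent and identically distributed. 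Feeding this into Theorem~\ref{TheoremTracePreliminaries} --- under which the support of a product of elements with pairwise disjoint supports has measure vector equal to the sum of theirs, so that $d$ of a disjoint union is $D$ of the sum --- collapses the exchangeability into a functional equation for $D$. In the reparametrization $f(\bar u):=D(1-e^{-u_{1}},\dots,1-e^{-u_{k}})$ this equation becomes genuine multiplicativity, $f(\bar u+\bar v)=f(\bar u)f(\bar v)$ on $(0,\infty)^{k}$; a non-increasing multiplicative function with $f(\bar 0)=1$ is $f(\bar u)=\prod_{i}e^{-\alpha_{i}u_{i}}$ with each $\alpha_{i}\in[0,\infty]$, i.e.\ $D(\bar t)=\prod_{i}(1-t_{i})^{\alpha_{i}}$. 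Finally, that $\alpha_{i}\in\{0,1,2,\dots\}\cup\{\infty\}$ I would obtain by identifying $\alpha_{i}$ with a genuine multiplicity: unwinding Theorem~\ref{TheoremTracePreliminaries}, $\alpha_{i}=\lim\mu_{i}(A)^{-1}\,tr(I-P^{A})$ along clopen sets $A$ whose remaining measures tend to $0$, and through the finite subgroups $G_{n}\subseteq G$ this is a limit of honest representation-theoretic multiplicities in $\pi|_{G_{n}}$, hence a non-negative integer or $+\infty$. Substituting $\bar t=\bar\mu(supp(g))$ gives the claimed formula.

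The main obstacle is the third step: turning the exchangeability of the subgroups $G(A_{1}),\dots,G(A_{N})$ inside $G(A_{1})\wr S(N)$ into the clean multiplicative functional equation for $D$, which requires controlling the ``non-trivial'' parts $I-P^{A_{j}}$ --- far from mutually orthogonal once some $\alpha_{i}\ge 2$ --- tightly enough to exclude anything beyond a product of powers. Pinning the exponents down as honest multiplicities, hence forcing integrality, is the other delicate point; I expect both are where the bulk of the work lies and where the techniques of \cite{dudko:2011} are used.
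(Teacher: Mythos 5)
Your overall skeleton matches the paper's: reduce via Theorem \ref{TheoremTracePreliminaries} to the set function $A\mapsto tr(P^A)$, show it factors through the vector of measures, extract a multiplicative functional equation, solve Cauchy's equation, and force integrality of the exponents. But two of your steps have genuine problems.

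First, the multiplicativity mechanism. If the subalgebras generated by $G(A_1),\dots,G(A_N)$ for disjoint $A_j$ were tensor-independent in the trace (which is what an exchangeability/de Finetti argument with trivial tail would produce), you would get $tr(P^{A}P^{B})=tr(P^{A})\,tr(P^{B})$ for disjoint $A,B$; but $P^{A}P^{B}=P^{A\cup B}$ by Proposition \ref{PropositionPropertiesProj}, and the target formula gives $tr(P^{A\cup B})=\prod_i(1-\mu_i(A)-\mu_i(B))^{\alpha_i}$, which differs from $\prod_i(1-\mu_i(A))^{\alpha_i}(1-\mu_i(B))^{\alpha_i}$ as soon as some $\alpha_i\geq 1$ and both sets have positive $\mu_i$-measure. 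So exchangeability of disjoint pieces cannot yield the functional equation you need: the correct multiplicativity is in the complement (Fix) measures and comes from \emph{intersections}, not disjoint unions. The paper's device is to take clopen $G_n$-invariant sets $B_n$ with $\mu_i(B_n)\to b_i$; then $P^{X\setminus B_n}$ commutes with $\pi(G_n)$, so its weak limit is central, hence the scalar $\varphi(b)$, while $\mu_i(A\cap B_n)\to\mu_i(A)b_i$ (Lemma \ref{LemmaGInvar}), giving $\varphi(a\cdot b)=\lim tr(P^{X\setminus A}P^{X\setminus B_n})=\varphi(a)\varphi(b)$. Relatedly, your well-definedness claim that $\bar\mu(A)=\bar\mu(B)$ forces $A$ and $B$ to be $G$-equivalent is false — the paper explicitly remarks that equality of all invariant measures does not imply $G$-equivalence, and Lemma \ref{LemmaMain} requires strict inequalities; well-definedness of $D$ is instead obtained from the continuity estimate of Lemma \ref{LemmaContinuityofPhi}.

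Second, integrality. The quantity $\mu_i(A)^{-1}\,tr(I-P^A)$ does converge to $\alpha_i$ along the sets you describe, but nothing identifies it with a multiplicity in $\pi|_{G_n}$: it is a ratio of the trace of a projection in a $II_1$ factor by a measure of a clopen set, and neither is quantized. The paper instead builds, inside $G$, copies of $S(2^n)$ supported on sets concentrated on $\mu_i$, shows the restriction of $\chi$ tends to $s\mapsto(card(fix(s))/2^n)^{\alpha_i}$, assembles these into a character of $S(2^\infty)$, and invokes Proposition 12 of \cite{dudko:2011}, which says such a function is positive-definite only when $\alpha_i$ is an integer. Some positivity argument of this kind is unavoidable; the limit-of-multiplicities heuristic does not supply it.
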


If $\alpha_i = \infty$ for some $i$, then the character $\chi$ is regular, i.e. $\chi(1) = 1$ and $\chi(g) = 0$ for all $g\neq 1$.   Theorem \ref{TheoremFormulaPreliminaries} shows that the description of characters for each given full group   is reduced to the description of ergodic measures on the corresponding Bratteli diagram. We  mention the following papers \cite{bezuglyi_kwiatkowski_medynets_solomyak:2010,bezuglyi_kwiatkowski_medynets_solomyak:2011, ferenczi_fisher_talet:2009,Mela:2006,petersen_varchenko:2010} devoted to the study of  ergodic measures on Bratteli diagrams.

    We  observe that the characters  of  the group $S(\infty)$ that
 take only non-negative values can be also
obtained as a measure of fixed points from certain
actions of $S(\infty)$, see \cite{vershik_kerov:1981_1}.  The description of characters in terms of measures of fixed points also appears in \cite{dudko:2011}, \cite{goryachko_petrov:2010}, and  \cite{LeinenPuglisi:2004}\footnote{Formally, the characters in \cite[Theorem 3.2]{LeinenPuglisi:2004} are given as functions that depend on the cardinality of the set of fixed points. However, these functions can be interpreted as  powers of the measure of fixed points.}, see also  \cite[Chapter 9]{grigorchuk:2011} for characters of certain finitely generated groups.

Theorem \ref{TheoremFormulaPreliminaries} classifies characters
for simple full groups and, as a result, all $II_1$-factor
representations of $G$ up to quasi-equivalence.
Observe that  there are Bratteli diagrams with
 non-simple full groups. Such groups might have characters
 taking complex values, which   cannot
  be exclusively determined by invariant measures.
However, we think that characters of arbitrary full groups can be described as a product of characters ``built by measures'' and some homomorphisms into the unit circle,
 cf. \cite{vershik:2011}.
 Denote by $D(G)$ the commutator subgroup of $G$.
Notice that $D(G)$ is a simple group if the dynamical system $(X,G)$
is minimal, see \cite{matui:2006} or \cite{lavrenyuk_sushchanskii:2009}.

\bigskip\noindent{\bf Conjecture.} {\it Let $G$ be the full group (not necessarily simple) of a Bratteli diagram.  Then
 every indecomposable character $\chi$ of the group $G$ has the form
$$\chi(g) = \rho([g])\cdot \mu_1(Fix(g))
^{\alpha_1}\cdots \mu_k(Fix(g))^{\alpha_k}
\mbox{ for }g\in G,$$ where $\alpha_1,\ldots,\alpha_k\in \{0,1,\ldots,\infty\}$,
  $\mu_1,\ldots,\mu_k$ are some ergodic $G$-invariant measures,
$[g]$ is the image of $g$ in the (Abelian) quotient group $H = G/D(G)$,
and $\rho$ is a homomorphism from the  group $H$ into the unit
circle $\{z\in\mathbb{C}:|z|=1\}$. }

\bigskip The structure of the paper is the following. In Section \ref{SectionPreliminaries}, we fix the notations and introduce the main definitions of the paper. Subsection \ref{SubsectionBratteliDiagrams} is devoted to Bratteli diagrams. Here we give a formal definition of Bratteli diagrams and their full groups. We prove that the full group is simple if and only if the Bratteli diagram can be telescoped to have an even number of edges between any two vertices. In Subsection  \ref{SubsectionCharacters} we give the definition of a character and recall the essence of the GNS construction.

Section \ref{SectionAlgebraicProperties} is mostly technical and contains two lemmas that allow us to move elements of the unitary group $\pi(G)$ to the center of the algebra $\mathcal M_\pi$ by conjugation. This technique was also extensively used in  the representation theory of $S(\infty)$, see, for example, \cite{Okounkov:1997}.  We also introduce a topology (Definition \ref{DefinitionUniformTopology}) on the group $G$ similar to the {\it uniform topology} studied in ergodic theory \cite{halmos:book}.  We will use this topology to study continuous characters and representations of non-simple full groups.

In Section \ref{OrthogonalProjections}, we study orthogonal
projections $P^A$ on the subspace of  vectors fixed under the group $\pi(G(A))$.
We show that when $A$ is a clopen set,  the projection $P^A$ can be obtained
 as a weak limit of involutions $\{\pi(s_n)\}$ with $supp(s_n) = A$.
Furthermore, we show that the projections $\{P^A\}$ form an Abelian
semigroup, i.e. $P^A P^B = P^{A\cup B}$ for any clopen sets $A$ and $B$.
 We then prove Theorem \ref{TheoremTracePreliminaries}   for simple full groups. We also establish this result  for arbitrary  full groups under the assumption that the character or the representation is continuous with respect to the uniform topology.

In Section \ref{SectionFormulaCharacter} we consider full groups that have only a finitely many of ergodic measures $\mu_1,\ldots,\mu_k$. We introduce the function  $\varphi(\mu_1(A),\ldots,\mu_k(A)) = tr(P^{X\setminus A})$, $A\subset X$, defined on a dense subset of $[0,1]^k$. We show that the domain of $\varphi$ can be extended to the entire set $[0,1]^k$ and that the function is multiplicative and continuous. Note that the multiplicativity of $\varphi$ can be seen as  a version of multiplicativity of characters on $S(\infty)$, see, for example, \cite{Okounkov:1997}. Solving Cauchy's functional equation for the function $\varphi$, we get that $\varphi(t_1,\ldots,t_k) = t_1^{\alpha_1}\cdots t_k^{\alpha_k}$ for some non-negative real numbers $\alpha_1,\ldots,\alpha_k$.  We then show that  these parameters are, in fact, integers, which yields the proof of Theorem \ref{TheoremFormulaPreliminaries}. This description is also valid for arbitrary full groups provided that the character is continuous.

As a corollary of Theorem \ref{TheoremFormulaPreliminaries} we get that any finite-type factor representation of a simple full group (corresponding to a non-regular character) is automatically continuous with respect to the uniform topology. This indicates that {\it simple} full groups of Bratteli diagrams have many features of ``big'' groups such as groups of measure-preserving automorphisms or homeomorphisms of a Cantor set, which possess a stronger automatic continuity property \cite{kechris_rosendal:2007, kittrell_tsankov:2010, rosendal:2009,   rosendal_solecki:2007}.

 Theorem \ref{TheoremFormulaPreliminaries} can be also used to describe characters of  full groups  of Cantor minimal systems,  see Corollary \ref{CorollaryCharactersMinimalSystems}. This shows that the  property of unique ergodicity of minimal systems can be completely reformulated on the language of indecomposable characters.

 In Section \ref{SectionExampleRepresentations}, we build a $II_1$-factor representation for each  character from Theorem \ref{TheoremFormulaPreliminaries}, whereby we describe all  $II_1$-factor representations of   simple full groups up to quasi-equivalence.

 In Section \ref{SectionIntervalPermutations} we consider the group of all rational permutations of the unit interval. We apply Theorem \ref{TheoremFormulaPreliminaries} to get a new proof of the classification of characters for this group, which was originally established  by Goryachko and Petrov \cite{goryachko:2008,goryachko_petrov:2010}.  We mention that the  proof  in \cite{goryachko_petrov:2010} is based on the asymptotic theory of characters.

%

\section{Preliminaries} \label{SectionPreliminaries}

In the section we give the definitions of  Bratteli diagrams, full groups, characters, and related dynamical notions.

\subsection{Bratteli diagrams and the associated full groups}\label{SubsectionBratteliDiagrams}  Bratteli diagrams have been studied in many papers on Cantor dynamics and operator algebras.  Since in the present paper we are mainly interested in the dynamical nature of Bratteli diagrams, our main references to the theory of such diagrams will be
\cite{herman_putnam_skau:1992} and \cite{giordano_putnam_skau:1995}.

\begin{definition}\label{Definition_Bratteli_Diagram} A {\it Bratteli diagram} is an
infinite graph $B=(V,E)$ such that the vertex set
$V=\bigcup_{i\geq 0}V_i$ and the edge set $E=\bigcup_{i\geq 1}E_i$
are partitioned into disjoint subsets $V_i$ and $E_i$ such that

(i) $V_0=\{v_0\}$ is a single point;

(ii) $V_i$ and $E_i$ are finite sets;

(iii) there exist a range map $r$ and a source map $s$ from $E$ to
$V$ such that $r(E_i)= V_i$, $s(E_i)= V_{i-1}$, and $s^{-1}(v)\neq\emptyset$, $r^{-1}(v')\neq\emptyset$ for all $v\in V$ and $v'\in V\setminus V_0$.
\end{definition}

A graphical example of a Bratteli diagram is given in Section \ref{SectionIntervalPermutations}.  The pair  $(V_i,E_i)$ is called the $i$-th level of the diagram $B$.
A finite or infinite sequence of edges $(e_i : e_i\in E_i)$ such
that $r(e_{i})=s(e_{i+1})$ is called a {\it finite} or {\it infinite path},
respectively.  For a Bratteli diagram
$B$, we denote by $X_B$ the set of infinite paths starting at the root vertex $v_0$. We will  always omit the index $B$ in the notation $X_B$ as the diagram will be always clear from the context. We endow the set $X$ with the topology generated by  cylinder sets $U(e_1,\ldots,e_n)=\{x\in X : x_i=e_i,\;i=1,\ldots,n\}$, where
$(e_1,\ldots,e_n)$ is a finite path from $B$. Then $X$ is a 0-dimensional compact metric space with
respect to this topology. Observe that the space $X$ might have isolated points, but we will assume that the space is perfect, i.e. $X$ is a Cantor set.

 Denote by $h_v^{(n)}$, $v\in V_n$, the number of finite paths connecting the root vertex $v_0$ and the vertex $v$. Given a vertex $v\in V_n$, enumerate the cylinder sets $U(e_1,\ldots,e_n)$ with $r(e_n)=v$ by $\{C_{v,0}^{(n)},\ldots,C_{v,h_v^{(n)-1}}^{(n)}\}$. Then

 \begin{equation} \Xi_n = \{C_{v,i}^{(n)} : v\in V_n,\;i=0,\ldots,h_v^{(n)}-1\}
 \end{equation}
 is a clopen partition of $X$.

Notice that  if $A$ is a clopen subset of $X_B$, then we can  always find a level $n\geq 1$ such that the set $A$ is a disjoint union of cylinder sets $U(e_1,\ldots,e_n)$ of depth $n$.  We will say then that the set $A$ is {\it compatible with the partition $\Xi_n$.}

 For multiple examples of diagrams as well as their diagrammatic representations we refer the reader to the papers \cite{bezuglyi_kwiatkowski_medynets_solomyak:2010}, \cite{bezuglyi_kwiatkowski_medynets_solomyak:2011}, \cite{herman_putnam_skau:1992}, and \cite{giordano_putnam_skau:1995}.

Fix a level $n\geq 0$. For each pair of vertices $(w,v)\in V_{n+1}\times V_{n}$, denote by $f^{(n)}_{w,v}$ the number of edges connecting them.

\begin{definition} The $|V_{n+1}|\times |V_n|$-matrix $F_n = (f_{w,v}^{(n)})$ ($v\in V_{n},\;w\in V_{n+1}$) is  called an {\it incidence matrix} of the diagram.
\end{definition}

\begin{remark} Observe that $F_0$ is a vector and $h_v^{(n)}$ coincides with the $v$-th coordinate of the vector $F_{n-1}\cdots F_0$.
\end{remark}

\begin{definition}  A Bratteli diagram $B=(V,E)$  is called {\it simple} if for any level $V_n$ there is a level $V_m$, $m>n$, such that every pair of vertices $(v,w)\in V_n\times V_m$ is connected by a finite path.
\end{definition}

\begin{definition} We say that two infinite paths $x = (x_n)_{n\geq 1}$ and $y = (y_n)_{n\geq 1}$ of a Bratteli diagram $B$ are {\it cofinal} or {\it tail-equivalent} if $x_n = y_n$ for all $n$ large enough.  The equivalence relation generated by cofinal paths is called the {\it cofinal} or {\it tail} equivalence relation.
\end{definition}

\begin{definition} Let $X$ be a compact space and $H$ a group acting by homeomorphisms of $X$. We say that the $H$-action is {\it minimal} if the $H$-orbit of every point $x\in X$, $Orb_H(x) = \{h(x) : h\in H\}$,  is dense in $X$.
\end{definition}

Fix a Bratteli diagram $B = (V,E)$ and a level $n\geq 1$.
Denote by $G_n$ the group of all homeomorphisms of $X: = X_B$
that changes only the first $n$-edges of each infinite path.  Namely, a homeomorphism
$g$ belongs to $G_n$ if and only if for any finite path
$(e_1,\ldots,e_n)$ there exists a finite path $(e_1',\ldots,e_n')$
with $r(e_n')=r(e_n)$ such that
$$ g(e_1,\ldots,e_n,e_{n+1},e_{n+2},\ldots)=(e_1',\ldots,e_n',e_{n+1},e_{n+2},\ldots)$$
for each infinite extension $(e_1,\ldots,e_n,e_{n+1},e_{n+2},\ldots)$ of the path $(e_1,\ldots,e_n)$.
 Observe that $G_n\subset G_{n+1}$.
Set $G = \bigcup_{n\geq 1}G_n$.

\begin{remark} (1) $G$ is a locally finite countable group.

(2) Given $x\in X$, the $G$-orbit of $x$ consists of all infinite paths cofinal to $x$.

(3) The simplicity of the diagram is equivalent to the minimality of the $G$-action on $X$.

(4) Given a level $n\geq 1$ and a vertex $v\in V_n$, denote by $G_{n}^{(v)}$ the subgroup of $G_n$ such that $g\in G_{n}^{(v)}$ if $g(x) = x$ for all $x\in X $ with $r(x_n)\neq v$. In other words, the elements of $G_{n}^{(v)}$ permutes only the infinite paths going through the vertex $v$. Then $$G_n = \prod_{v\in V_{n}}G_{n}^{(v)}.$$
\end{remark}

\begin{definition}\label{DefinitionFullGroup} We will call  the group $G$  the {\it full group of the Bratteli diagram $B$}. The symbol $G$ will be reserved exclusively for the full groups of Bratteli diagrams.
\end{definition}

For various (other) notions of full groups in the context of Cantor dynamics, we refer the reader to the papers \cite{giordano_putnam_skau:1999} and \cite{bezuglyi_medynets:2008}.  The main property of the group $G$, which also holds in other full groups (cf. \cite{giordano_putnam_skau:1999}), is the ability to ``glue'' elements of $G$. Namely, if $\{A_1,\ldots, A_n\}$ is a clopen partition of $X$ and $\{g_1,\ldots, g_n\}$ are elements of $G$ such that $\{g_1(A_1),\ldots, g_n(A_n)\}$ is also a clopen partition of $X$, then the homeomorphism $g$ of $X$ defined by $g(x) = g_i(x)$ (for $x\in A_i$, $i=1,\ldots,n$) {\it also belongs to} $G$.

\begin{definition} We say that two clopen sets $A$ and $B$ are {\it $G$-equivalent} if there is $g\in G$ with $g(A)  = B$.
\end{definition}

\begin{remark} (1) Since the group $G$ is amenable (locally finite), the dynamical system $(X,G)$ has at least one $G$-invariant probability measure.   Hence if $A$ and $B$ are $G$-equivalent, then $\mu(A) = \mu(B)$ for all $G$-invariant probability measures. Observe that,  in general, the converse statement is not true.

(2) Let $A$ be a clopen set such that $A$ can be written as $A =A_1\sqcup A_2$ with $A_1$ and $A_2$ being $G$-equivalent.  Choose $n\geq 1$ so that all sets  $A_1$, $A_2$, and $A$ are compatible with the partition $\Xi_n$ and $g\in G_n$. Then for every vertex $v\in V_n$, there is an even number of finite paths from the root vertex $v_0$ to $v$ lying completely in the set $A$.

\end{remark}

The following result gives  a dynamical criterion
for a group $G$ to be simple. We note that a special
case of this result, when the diagram $B$ has exactly
one vertex at every level, can be derived from Theorem 1 of \cite{kroshko_sushchansky:1998}, cf. \cite[Corollary 4.10]{matui:2006}.  Denote by $D(G)$ the commutator subgroup of $G$, i.e.
a subgroup generated by all elements of the form
$[g_1,g_2]:=g_1g_2g_1^{-1}g_2^{-1}$, $g_1,g_2\in G$.
 Clearly, $D(G)$ is a normal subgroup of $G$.

\begin{proposition}\label{PropositionSimpleDiagrams}

(1) The group $D(G)$ is  simple if and only if the associated Bratteli diagram is simple. This is also equivalent to the minimality of the dynamical system $(X,G)$.

(2) $G$ is simple, i.e. $D(G) = G$,  if and only if the diagram is simple and every clopen set $A\subset X$ can be represented as $A = A_1\sqcup A_2$ with $A_1$ and $A_2$ being $G$-equivalent.
\end{proposition}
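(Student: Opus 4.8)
The plan is to reduce simplicity questions about the locally finite group $G = \bigcup_n G_n$ to the structure of the finite groups $G_n = \prod_{v\in V_n} G_n^{(v)}$, where each factor $G_n^{(v)}$ is a symmetric group $S(h_v^{(n)})$ acting on the $h_v^{(n)}$ finite paths from $v_0$ to $v$. Throughout I will use the ``gluing'' property of full groups recorded after Definition \ref{DefinitionFullGroup}, and I will freely telescope the diagram (pass to a subsequence of levels, which leaves $G$, $X$, and the $G$-action unchanged).

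\textbf{Part (1).} First I would show that simplicity of the diagram implies minimality of $(X,G)$: if $B$ is simple, then given any cylinder sets $U$ and $U'$ based at levels below some common level $m$ for which every vertex of $V_n$ connects to every vertex of $V_m$, one produces $g\in G_m$ carrying a point of $U$ into $U'$, so every orbit is dense; this is essentially Remark (3) after Definition \ref{DefinitionFullGroup}. The converse (minimal $\Rightarrow$ simple diagram) is immediate since a non-simple diagram has a proper closed invariant subset of paths. Next, for the simplicity of $D(G)$: when $B$ is simple, I would take $1 \neq N \trianglelefteq D(G)$, pick $n$ and $1\ne g\in N\cap G_n$, and observe that $g$ moves some path through some $v\in V_n$, hence the projection of $g$ to $G_n^{(v)}$ is a nontrivial permutation. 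Using commutators with elements of $G_n^{(v)}$ and then embedding $G_n^{(v)} = S(h_v^{(n)})$ diagonally into $G_m^{(w)}$ for $m\gg n$ via the paths of $B$, together with the gluing property, I would show $N$ contains $D(G_m^{(w)})$ — an alternating group on arbitrarily many points — for every $w$ at every sufficiently deep level $m$; taking the union over $m$ gives $N \supseteq D(G)$, so $D(G)$ is simple. (Simplicity of $B$ is needed so that every deep vertex $w$ is reachable from $v$, which is what lets the alternating groups ``spread'' across the whole diagram.) Conversely if $B$ is not simple, the closed invariant subset yields a proper normal subgroup of $D(G)$, so $D(G)$ is not simple.

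\textbf{Part (2).} Here $G = D(G)$ iff $G$ is perfect. Since each $G_n^{(v)} \cong S(h_v^{(n)})$ and $S(m)/A(m) \cong \mathbb{Z}/2$, the abelianization of $G_n$ is $(\mathbb{Z}/2)^{|V_n|}$ via the product of sign characters, and the abelianization of $G$ is the inverse limit along the connecting maps. So $G$ is perfect iff every sign character of every $G_n$ dies in some $G_m$, which by an incidence-matrix computation happens iff the number $f^{(n,m)}_{w,v}$ of paths from each $v\in V_n$ to each $w\in V_m$ can be made even for all pairs after telescoping — equivalently, by Remark (2) after Definition \ref{Definition_Bratteli_Diagram} (the ``even number of paths inside a clopen set'' observation), iff every clopen set $A$ splits as $A = A_1\sqcup A_2$ with $A_1, A_2$ $G$-equivalent. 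The forward direction: if $G$ is simple then $G=D(G)$, so $G$ is perfect, forcing the even-path condition, which I then convert into the splitting statement by using the paths into a telescoped level to pair them up and gluing the pairing into a single element $g\in G$ with $g(A_1)=A_2$. The reverse direction: the splitting hypothesis forces (by the same Remark) evenness of all telescoped path-counts, hence triviality of the abelianization, hence $G=D(G)$, and combined with Part (1) (the diagram being simple) $G$ is simple.

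\textbf{Main obstacle.} The technical heart is the bookkeeping in Part (1): showing a nontrivial normal subgroup $N$ of $D(G)$ absorbs the alternating group $A(h_w^{(m)})$ inside each $G_m^{(w)}$ for all deep $w$. One must carefully track how a single nontrivial permutation at level $n$, after diagonal block embedding along the (many) paths of a simple diagram and after gluing, produces $3$-cycles — and hence all of $A(h_w^{(m)})$ — in every deep vertex group; the simplicity of $B$ is exactly what guarantees enough paths to do this uniformly. The translation between ``even path counts after telescoping'', ``vanishing sign characters'', and ``$G$-equivalent halving of clopen sets'' in Part (2) is then a comparatively routine matching argument using the gluing property.
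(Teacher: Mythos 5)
Your proposal is correct in substance, and its combinatorial core --- the parity of path counts between levels --- is exactly the paper's, but the two halves of Part (2) are organized differently. For ``$G$ simple $\Rightarrow$ splitting'' your argument coincides with the paper's: the paper chooses $m$ so that $G_n\subset D(G_m)$ and notes that a transposition at $v\in V_n$ embeds into $G_m^{(w)}$ as a product of $\mathrm{card}(C_{v,w})$ transpositions, hence all path counts must be even; your sign-character formulation is the same parity obstruction in different clothing. For the converse the routes genuinely differ: you deduce $G=D(G)$ algebraically from the vanishing of $G^{\mathrm{ab}}=\varinjlim\,(\mathbb{Z}/2)^{V_n}$ (note this is a \emph{direct} limit of the abelianizations, not an inverse limit as you wrote --- the inverse limit describes the dual character groups; the criterion you extract is nevertheless the right one), whereas the paper gives a hands-on dynamical construction: it decomposes an arbitrary $g$ by period, splits a fundamental domain of each periodic part into $G$-equivalent halves, writes $g=g_1g_2$ with $g_1,g_2$ supported on the two halves and conjugate via an explicit involution $h$, and concludes $g=g_1hg_1h^{-1}\in D(G)$ because $g_1$ is conjugate to $g_1^{-1}$. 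Your route is shorter once the colimit identification of the abelianization is justified; the paper's avoids abelianizations entirely and exhibits the commutator expression explicitly. Two smaller points: the remark you invoke for ``splitting $\Rightarrow$ even path counts'' is the one following the definition of $G$-equivalence, not the one after the definition of a Bratteli diagram; and for Part (1) the paper simply cites Matui and Lavrenyuk--Sushchanskii for the simplicity of $D(G)$, so your sketch (a nontrivial normal subgroup absorbs the alternating groups $D(G_m^{(w)})$ at all deep vertices) proves more than the paper attempts --- it is the standard argument, but it is the one step of your write-up that is not carried out and is by far the heaviest.
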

\begin{proof} (1) The first statement was proved in \cite[Lemma 3.4]{matui:2006}, see also \cite[Theorem 2]{lavrenyuk_sushchanskii:2009}.

(2-i) First of all assume that every clopen set can be decomposed as a union of $G$-equivalent clopen sets. Fix $g\in G$. Denote by $B_k$ the set of points having $g$-period exactly $k$, i.e. $$B_k = \{x\in X : g^k(x) = x\mbox{ and }g^l(x)\neq x\mbox{ for all }1\leq l\leq k-1\}.$$ Denote by $K$ the set of all integers $k$ with $B_k\neq \emptyset$. Note that $K$ is a finite set.
Choose a clopen set $B_k^{(0)}$ with $g^{l}(B_k^{(0)})\cap B_k^{(0)} = \emptyset$ for all $l=1,\ldots,k-1$, and $B_k^{(0)}\cup g(B_k^{(0)})\cup\ldots\cup g^{k-1}(B_k^{(0)}) = B_k$.

Find $G$-equivalent clopen sets $A_k^{(1)}$ and $A_k^{(2)}$ such that $B_k^{(0)} = A_k^{(1)}\sqcup A_k^{(2)}$. Choose $h_k\in G$ such that $h_k|(X \setminus B_k^{(0)}) = id$, $h_k(A_k^{(1)}) = A_k^{(2)}$, and $h_k^2 = id$.  Define a homeomorphism $h\in G$ as follows
$$h(x) = \left\{\begin{array}{ll}g^lh_k g^{-l}(x) & \mbox{for }x\in g^{l}(B_k^{(0)}),\;0\leq l\leq k-1,\;k\in K;\\
x & \mbox{elsewhere.}  \end{array}\right.$$

Set $$C_i = \bigcup_{k\in K}\bigcup_{l=0}^{k-1}g^l(A_k^{(i)})\mbox{ for }i=1,2.$$ Define $g_i\in G$ as $g$ on $C_i$ and as $id$ elsewhere. Now, it is a routine to check that $hg_1h^{-1} = g_2$ and $g = g_1g_2 = g_1hg_1h^{-1} $. Noting that $g_1$ is isomorphic to $g_1^{-1}$, we conclude that $g\in D(G)$. Therefore, $D(G) = G$.

(2-ii) Now assume that $G$ is a simple group, i.e. $G = D(G)$. Fix a clopen set $A$.
Without loss of generality we may assume that $A=U(e_1,\ldots, e_n)$
is a cylinder set and there exists a path
$(e_1',\ldots,e_n')\neq (e_1,\ldots, e_n)$
 between $v_0$ and $v=r(e_n)$. To prove the proposition it is enough to show that there exists a level
$m>n$ such that for any vertex $w\in V_m$ the cardinality of the set $C_{v,w}$
of finite paths between $v$ and $w$ is even. Then dividing $C_{v,w}$ into subsets
$C_{v,w}^{(1)}$ and $C_{v,w}^{(2)}$ of equal cardinalities and setting
 $A_i$, $i=1,2$, to be the union of cylinders $U(e_1,\ldots,e_m)$ with
 $(e_{n+1},\ldots,e_m)\in C_{v,w}^{(i)}$, we obtain the desired partition.

 Find $m>n$ so that every element $g\in G_n$
 can be written as a product of commutators $[f,h]$ with $f,h\in G_m$.
In other words,  the group $G_n$ can be seen as a subgroup
of the alternating group $D(G_m)$.  Assume that for some $w\in V_m$ the cardinality
of $C_{v,w}$ is odd. Let $g\in G_n^{(v)}$
be the transposition $(e_1,\ldots,e_n)\leftrightarrow (e_1',\ldots,e_n')$.
 Then, the embedding of
$g$ into $G_m^{(w)}$ is a product of an odd number of transpositions.
Hence, $g$ is an odd permutation in the group $G_m^{(w)}$, i.e. $g\notin D(G_m)$,
which is a contradiction.
\end{proof}

\begin{definition} Let $B = (V,E)$ be a Bratteli diagram. Fix a sequence of integers $0 = m_0 < m_1<\ldots$. Consider a diagram $B' = (V',E')$, where $V'_n = V_{m_n}$ and $E'_n$ consists of all finite paths of $B$ between levels $V_{m_{n-1}}$ and $V_{m_n}$. Then the diagram $B'$ is called a {\it telescope} of $B$.
\end{definition}

Observe that the telescoping procedure does not change the space of infinite paths and the full group of the diagram, see \cite[Definition 2.2]{herman_putnam_skau:1992} for more details.

Using the minimality of the system $(X,G)$ and Proposition \ref{PropositionSimpleDiagrams}, one can easily establish the following result.

\begin{proposition}\label{PropositionSimplicityEvenDiagrams} Let $G$ be the full group of a Bratteli diagram $B$. If $G$ is a simple (non-trivial) group, then we can telescope a diagram $B$ so that each pair of vertices from consecutive levels of the new diagram is always connected by an {\it even} (at least two) number of edges.
\end{proposition}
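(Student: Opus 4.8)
The plan is to combine the "divisibility into $G$-equivalent halves" characterization of simplicity from Proposition \ref{PropositionSimpleDiagrams}(2) with the last Remark preceding it (which translates $G$-equivalence of clopen subsets of a cylinder into parity of path counts) and then feed this into a telescoping argument. First I would apply Proposition \ref{PropositionSimpleDiagrams}(2): since $G$ is simple, the diagram is simple and \emph{every} clopen set $A$ splits as $A = A_1 \sqcup A_2$ with $A_1, A_2$ being $G$-equivalent. I would apply this with $A$ ranging over the cylinder sets: fix a level $n$ and a vertex $v \in V_n$, and take $A$ to be a single cylinder set $U(e_1,\dots,e_n)$ with $r(e_n) = v$; more usefully, take $A = C_{v,i}^{(n)}$, one of the cylinders sitting over $v$. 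By the Remark, compatibility of the decomposition $A = A_1 \sqcup A_2$ with some partition $\Xi_m$, $m > n$, forces: for every vertex $w \in V_m$, the number of finite paths from $v_0$ to $w$ lying entirely inside $A$ is even. Since $A$ is a single cylinder over $v$, the paths from $v_0$ to $w$ inside $A$ are in bijection with paths from $v$ to $w$ in the diagram; hence the number of edges (paths of the telescoped diagram) between $v$ and $w$ at level $m$ is even.

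Next I would make this uniform over all vertices. For each of the finitely many vertices $v \in V_n$ (and finitely many $i$) the argument above produces a level $m(v) > n$ past which all vertex-to-vertex path counts from $v$ are even; take $m_1 = \max_v m(v)$. So after telescoping between level $n$ (call it $m_0$, and to start with take $m_0 = 0$ or $m_0=1$) and $m_1$, every pair $(v,w) \in V_{m_0} \times V_{m_1}$ is joined by an even number of edges. Iterating — apply the same reasoning starting from level $m_1$ to get $m_2$, and so on — yields a telescoping $0 = m_0 < m_1 < m_2 < \cdots$ in which consecutive levels are always connected by an even number of edges. Finally I would upgrade "even" to "even and at least two": simplicity of the diagram guarantees that, possibly after further telescoping, \emph{every} pair of consecutive-level vertices is connected by at least one path, hence by an even number $\geq 2$; I would interleave this standard telescoping-to-a-complete-diagram step with the parity step (take each new level far enough that both properties hold simultaneously — one can intersect the two "sufficiently large $m$" conditions since each is a tail condition on levels).

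The main obstacle I anticipate is the bookkeeping at the very first level: the Remark as stated concerns clopen sets $A$ that decompose into $G$-equivalent halves and counts paths \emph{inside} $A$, so I must be careful to choose $A$ to be a single cylinder (not a union) so that "paths from $v_0$ to $w$ inside $A$" cleanly equals "paths from $v$ to $w$", and I need the decomposition to be compatible with a common partition $\Xi_m$ — which is automatic since clopen sets are always compatible with some $\Xi_m$ and finitely many of them can be made compatible with a common one. A secondary subtlety is that telescoping changes which cylinders sit over a vertex, so the iteration must re-invoke Proposition \ref{PropositionSimpleDiagrams}(2) for the telescoped diagram (valid, since telescoping leaves $G$ and hence its simplicity unchanged, as noted after the definition of telescope). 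Modulo this care, each step is routine, and the even-and-at-least-two conclusion follows.
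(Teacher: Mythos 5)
Your proof is correct and is essentially the argument the paper has in mind: the paper states the proposition without proof, citing exactly the two ingredients you use — Proposition \ref{PropositionSimpleDiagrams}(2) (every cylinder splits into $G$-equivalent halves, whence, via the preceding Remark, even path counts from each vertex to all vertices of some deeper level) and minimality/simplicity of the diagram (whence a positive, hence $\ge 2$, path count). Your observation that both "evenness" and "full connectivity" are tail conditions on the choice of deeper level (so the finitely many requirements at each stage can be met simultaneously and the telescoping iterated) is the only point needing verification beyond the cited results, and it holds since the path-count matrix to a deeper level is the product of the intermediate ones.
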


\begin{definition} Diagrams with the property as in the proposition above will be called {\it even diagrams}.
\end{definition}


\subsection{Inductive limits of  symmetric groups} \label{SubsectionInductiveLimits}
 Fix a Bratteli diagram $B = (V,E)$ and consider the associated full group $G$. Denote by $S(M)$ the group  of all permutations of a finite set $M$.

 Denote by $\{F_n\}_{n\geq 1}$ the sequence of incidence matrices of $B$. For every level $n$ and a vertex $v\in V_n$, consider the set $M_n^{(v)}$ of all finite paths of $B$ connecting the root vertex $v_0$ and the vertex $v$. Clearly, the group $G_n^{(v)}$ can be identified with $S(M_n^{(v)})$.  Then the  structure of the diagram $B$ gives a ``natural'' decomposition
\begin{equation}\label{EquationEmbeddingGroups}M_n^{(v)} = \bigsqcup_{w\in V_{n-1}}\bigsqcup_{i=1}^{f_{v,w}^{(n)}} M_{n-1}^{(w)},\end{equation} where each set $M_{n-1}^{(w)}$ is taken with the multiplicity $f_{v,w}^{(n)}$. This decomposition, in its turn, gives a sequence of injective homomorphisms $\rho_n: G_{n-1}\rightarrow G_n$, $n\geq 1$. Thus, the group $G$ can be seen as the inductive limit of the system $(G_{n-1}\stackrel{\rho_n}{\rightarrow}G_n)_{n\geq 1}$. Conversely, using arguments similar to Theorem 5.1 \cite{LavrenyukNekrashevych:2007} one can show that any inductive limit of symmetric groups with the embedding as above is isomorphic to the full group of some Bratteli diagram.

{\it To avoid trivialities,  we will always assume that Bratteli diagrams and the associated groups are not trivial and the path spaces are Cantor sets.
Furthermore, we will  assume that the system $(X,G)$ is minimal.}


\subsection{Characters and factor representations} \label{SubsectionCharacters}
In this subsection we recall basic definitions from the representation theory and the theory of operator algebras. For  details we refer the reader to the books
\cite{bratelli_robinson:1982}, \cite{kadison_ringrose:I}, \cite{kadison_ringrose:II}, and \cite{Tak}.

Let $H$ be a separable Hilbert space with the inner product $(\cdot,\cdot)$. Denote by $L(H)$ the algebra of bounded linear
operators on $H$. Each pair of nonzero vectors
$\xi,\eta\in H$ determine  semi-norms
$\|A\|_{\xi,\eta}=|(A\xi,\eta)|$ and $\|A\|_\xi=\|A\xi\|$ on $L(H)$.
 In this paper, by an algebra of operators
we always mean a $*$-algebra  containing the identity
operator $I$.

\begin{definition}\label{DefinitionWeakTopology}
 The weak (strong) operator topology on $L(H)$ is the least topology making all the semi-norms $\|\cdot\|_{\xi,\eta}$
($\|\cdot\|_\xi$) continuous.
\end{definition}

For a subset of operators  $\mathcal{S}\subset L(H)$
the {\it commutant} of $\mathcal{S}$ is denoted by
 $$\mathcal{S}'=\{A\in L(H):AB=BA\;\text{for each}\;B\in \mathcal{S}\}.$$
The commutant of a subset of operators is always an algebra closed in the weak and strong
operator topologies. The commutant of $\mathcal{S}'$ is denoted by $\mathcal{S}''$.
The proof of the following von Neumann bicommutant theorem can be found, for example, in
 \cite[Theorem 2.4.11]{bratelli_robinson:1982}.
\begin{theorem}\label{TheoremBicommutant} Let $\mathcal{A}\subset L(H)$
 be a $*$-subalgebra. Then the following conditions are equivalent:

(1) $\mathcal{A}$ is closed in the weak operator topology;

(2) $\mathcal{A}$ is closed in the strong operator topology;

(3) $\mathcal{A}=\mathcal{A}''$.
\end{theorem}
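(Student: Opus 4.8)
The plan is to prove the two easy equivalences first and then isolate the single substantive point, namely that the strong (equivalently weak) closure of $\mathcal{A}$ coincides with $\mathcal{A}''$. Since a commutant is always closed in the weak operator topology (as noted just above the statement), $\mathcal{A}'' = (\mathcal{A}')'$ is weakly closed; hence $(3)\Rightarrow(1)$ is immediate, and the same remark gives strong closedness, so $(3)\Rightarrow(2)$. For $(1)\Leftrightarrow(2)$ I would use that the weak and strong operator topologies determine the same continuous linear functionals on $L(H)$ (both are precisely the maps $A\mapsto\sum_i(A\xi_i,\eta_i)$ with finitely many terms). Consequently a convex subset of $L(H)$ has the same closure in both topologies, by the Hahn--Banach separation theorem, and since a $*$-subalgebra is in particular a linear subspace, its weak and strong closures agree. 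It therefore suffices to show that the strong closure $\overline{\mathcal{A}}$ equals $\mathcal{A}''$; together with $(1)\Leftrightarrow(2)$ this yields all three equivalences, because then $\mathcal{A}$ is strongly closed if and only if $\mathcal{A}=\overline{\mathcal{A}}=\mathcal{A}''$.

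One inclusion is formal: every element of $\mathcal{A}$ commutes with every element of $\mathcal{A}'$, so $\mathcal{A}\subseteq\mathcal{A}''$, and since $\mathcal{A}''$ is strongly closed we get $\overline{\mathcal{A}}\subseteq\mathcal{A}''$. The heart of the proof is the reverse inclusion $\mathcal{A}''\subseteq\overline{\mathcal{A}}$, which I would establish by an approximation argument. First I would treat a single vector. Fix $T\in\mathcal{A}''$ and $\xi\in H$, and let $P$ be the orthogonal projection onto the closed subspace $K=\overline{\mathcal{A}\xi}$. Because $\mathcal{A}$ is a $*$-algebra, $K$ is invariant under $\mathcal{A}$ and hence so is $K^\perp$, which forces $P\in\mathcal{A}'$. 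As $T\in\mathcal{A}''=(\mathcal{A}')'$, the operator $T$ commutes with $P$; and since $\mathcal{A}$ contains the identity (per the standing convention for operator algebras in this paper), $\xi\in K$ so $P\xi=\xi$. Therefore $T\xi=TP\xi=PT\xi\in K=\overline{\mathcal{A}\xi}$, i.e.\ for every $\eps>0$ there is $A\in\mathcal{A}$ with $\|T\xi-A\xi\|<\eps$.

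To pass from one vector to a finite family $\xi_1,\ldots,\xi_n$ I would use the amplification trick. Work on $H^{(n)}=H\oplus\cdots\oplus H$ and embed $\mathcal{A}$ diagonally as $\mathcal{A}^{(n)}=\{\,\mathrm{diag}(A,\ldots,A):A\in\mathcal{A}\,\}$. Writing operators on $H^{(n)}$ as $n\times n$ matrices over $L(H)$, a direct computation shows $(\mathcal{A}^{(n)})'=M_n(\mathcal{A}')$, and hence $(\mathcal{A}^{(n)})''$ consists exactly of the diagonal operators $\mathrm{diag}(T,\ldots,T)$ with $T\in\mathcal{A}''$. Applying the single-vector case to $\mathcal{A}^{(n)}$, to $\mathrm{diag}(T,\ldots,T)\in(\mathcal{A}^{(n)})''$, and to the vector $(\xi_1,\ldots,\xi_n)$ produces $A\in\mathcal{A}$ with $\sum_{i=1}^n\|T\xi_i-A\xi_i\|^2<\eps^2$, so $T$ lies in the strong closure of $\mathcal{A}$. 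This gives $\mathcal{A}''\subseteq\overline{\mathcal{A}}$ and completes the argument. I expect the main obstacle to be the two structural computations underlying the density step: verifying that invariance of $K$ under the $*$-algebra $\mathcal{A}$ yields $P\in\mathcal{A}'$ (where $*$-closure is essential) and identifying the commutant of the amplification $\mathcal{A}^{(n)}$. The role of the identity element of $\mathcal{A}$ in guaranteeing $\xi\in K$ is likewise a point that must be genuinely used rather than assumed away.
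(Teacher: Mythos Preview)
Your argument is correct and is the standard proof of the bicommutant theorem. Note, however, that the paper does not supply its own proof of this statement: it is quoted as background and the reader is referred to \cite[Theorem 2.4.11]{bratelli_robinson:1982}. The approach you outline---Hahn--Banach for the equivalence of weak and strong closures on convex sets, the projection onto $\overline{\mathcal{A}\xi}$ lying in $\mathcal{A}'$, and the amplification to $H^{(n)}$ for finitely many vectors---is exactly the classical argument found in that reference, so there is nothing to compare.
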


\begin{definition}\label{DefinitionWAlgebra} A $*$-subalgebra
$\mathcal{A}\subset L(H)$ is called a {\it von Neumann (or $W^*$-) algebra}
if it meets the conditions of Theorem \ref{TheoremBicommutant}.\end{definition}

\begin{definition}\label{DefinitionRepresentation}
 A {\it unitary representation} of a group $G$ is a homomorphism
$G\rightarrow U(H)$, where $U(H)$ is the group of unitary operators on a Hilbert
space $H$. If
$G$ is a topological group, the representation is called {\it  continuous} if it is continuous with respect to the weak
operator topology on $L(H)$.
\end{definition}
For a unitary representation $\pi$ of a group $G$ denote by
$\mathcal{M}_\pi$ the $W^{*}$-algebra generated by the operators $\pi(G)$. Two unitary representations $\pi_1$ and $\pi_2$ of the same
group $G$ are
called {\it quasi-equivalent} if there is an isomorphism of von Neumann algebras
$\omega:\mathcal{M}_{\pi_1}\rightarrow \mathcal{M}_{\pi_2}$ such that
$\omega(\pi_1(g))=\pi_2(g)$ for each $g\in G$.

\begin{definition}\label{definitioninitionRepresentations}
A representation $\pi$ of a group $G$ is called a {\it
factor representation} if the algebra $\mathcal{M}_\pi$ is a
factor,  that is  $\mathcal{M}_\pi \cap \mathcal{M}_\pi'=\mathbb{C}I$.
\end{definition}

\begin{definition}\label{definitioninitionCharacter}
A {\it character} of a group $G$ is a function $\chi:G\rightarrow
\mathbb{C}$ satisfying the following properties:
\begin{itemize}
\item[(1)] $\chi(g_1g_2)=\chi(g_2g_1)$ for any $g_1,g_2\in G$;
\item[(2)] the matrix
$\left\{\chi\left(g_ig_j^{-1}\right)\right\}_{i,j=1}^n$ is
nonnegative-definite for any integer $n\geq 1$ and elements $g_1,\ldots,g_n\in G$;
\item[(3)] $\chi(e)=1$, where $e$ is the  identity of $G$.
\end{itemize}

A character $\chi$ is called {\it indecomposable} if it
cannot be represented in the form $\chi=\alpha
\chi_1+(1-\alpha)\chi_2$, where $0<\alpha<1$ and $\chi_1,\chi_2$ are
distinct characters.
\end{definition}

Fix a character $\chi$ of the group
$G$. Consider the space $L_0(G)$ of functions on $G$ with finite supports. Define the bilinear form on
 $L_0(G)$ by
$$B_\chi(f_1,f_2)=\sum\limits_{g_1,g_2\in G}f_1(g_1)
\overline{f_2(g_2)}\chi(g_1g_2^{-1}).$$ It follows from Definition \ref{definitioninitionCharacter}
 that $B_\chi$ is a nonnegative-definite Hermitian form.
Let $\widetilde{H}_\chi$ be the completion of $L_0(G)$ with respect to
$B_\chi$. Set $$H_\chi=\widetilde{H}_\chi/Ker(B_\chi).$$  Then
$H=H_\chi$ is a Hilbert space with the inner product $(\cdot,\cdot)$ induced
by $B_\chi$. Define an
action $\pi$ of $G$ on $L_0(G)$ by the formula
$$(\pi(g)f)(h)=f(g^{-1}h).$$ Observe that the form $B_\chi$ is invariant
under $\pi(G)$. Therefore, $\pi$ defines a unitary representation
of $G$ on $H$, which we  denote by the same symbol $\pi$.  Consider the function
 $\delta\in L_0(G)$ such that $\delta(e) = 1$ and $\delta(g) = 0$ for all $g\neq e$.  Denote by $\xi\in H$ the image of $\delta$ in $H$. It follows that

 $$(\pi(g)\xi,\xi)=B_\chi(\pi(g)\delta,\delta)=\chi(g)\mbox{ for any }g\in G.$$
One can show that the vector $\xi$ is cyclic and separating for the algebra
$\mathcal{M}_{\pi}$. Recall that $\xi$ is {\it separating} if $A\xi = 0$,
$A\in \mathcal M_\pi$, implies $A = 0$. The procedure described above is called  the
Gelfand-Naimark-Siegal (abbreviated ``GNS'') construction. Using the disintegration theorem \cite[Theorem 8.21]{Tak}, one can also establish the following result.

\begin{proposition} The representation $\pi$ constructed above by $\chi$
is a factor representation if and only if the character $\chi$ is indecomposable.
\end{proposition}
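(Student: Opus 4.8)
The plan is to invoke the standard correspondence between extreme points of the cone of characters and factor representations arising from the GNS construction, which is classical for central positive-definite functions on discrete groups. First I would fix a character $\chi$ and form the GNS data $(\pi, H, \xi)$ as constructed in the excerpt, so that $\chi(g) = (\pi(g)\xi,\xi)$ and $\xi$ is cyclic and separating for $\mathcal{M}_\pi$. The centrality condition $\chi(g_1 g_2) = \chi(g_2 g_1)$ translates into the statement that $tr(T) := (T\xi,\xi)$ is a faithful normal tracial state on $\mathcal{M}_\pi$, and conversely any decomposition of $\chi$ into a convex combination of characters corresponds to a decomposition of this trace.

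The key step in the ``if'' direction is the following: suppose $\mathcal{M}_\pi$ is not a factor, so its center $\mathcal{Z} = \mathcal{M}_\pi \cap \mathcal{M}_\pi'$ contains a nontrivial projection $Z$ with $0 < tr(Z) < 1$. Set $\alpha = tr(Z)$ and define $\chi_1(g) = \alpha^{-1}(\pi(g)Z\xi,\xi)$ and $\chi_2(g) = (1-\alpha)^{-1}(\pi(g)(I-Z)\xi,\xi)$. Because $Z$ is central, each $\chi_i$ is again a central positive-definite normalized function, i.e.\ a character, and $\chi = \alpha\chi_1 + (1-\alpha)\chi_2$. One must check $\chi_1 \neq \chi_2$: this follows because $\xi$ is separating, so $Z\xi \neq (1-Z)\xi$ as vectors in distinct summands, and distinctness of the corresponding vector states on the cyclic algebra forces $\chi_1 \neq \chi_2$ (if they agreed, the GNS uniqueness would identify the subrepresentations $\pi|_{Z H}$ and $\pi|_{(I-Z)H}$ in a way incompatible with $Z$ being a nontrivial central projection with the separating vector). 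Hence $\chi$ is decomposable, proving the contrapositive.

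For the ``only if'' direction, suppose $\chi = \alpha\chi_1 + (1-\alpha)\chi_2$ with $0 < \alpha < 1$ and $\chi_1 \neq \chi_2$ characters. The inequality $\alpha\chi_1 \le \chi$ (as positive-definite functions) gives, via the standard Radon--Nikodym argument on the GNS space, a positive operator $T \in \mathcal{M}_\pi'$ with $0 \le T \le I$ and $\alpha\chi_1(g) = (\pi(g)T\xi,\xi)$ for all $g$. The centrality of both $\chi$ and $\chi_1$ forces $T$ to commute not only with $\pi(G)' $-side operators but also with $\pi(G)$ itself --- concretely, one checks $(\pi(g)T\pi(h)\xi,\xi) = (\pi(gh)T\xi,\xi)\cdot(\ldots)$ using traciality, showing $T \in \mathcal{M}_\pi \cap \mathcal{M}_\pi' = \mathcal{Z}$. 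If $\mathcal{M}_\pi$ were a factor, $T$ would be a scalar $cI$, whence $\chi_1 = \chi$ and then $\chi_2 = \chi$, contradicting $\chi_1 \neq \chi_2$. Therefore $\mathcal{M}_\pi$ is not a factor.

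The main obstacle I expect is the bookkeeping in the ``only if'' direction: extracting the Radon--Nikodym operator $T$ and, crucially, verifying that centrality of the characters upgrades $T$ from an element of the commutant $\mathcal{M}_\pi'$ to an element of the center $\mathcal{Z}$. This is where the tracial property $tr(AB) = tr(BA)$ is used essentially, and one has to be careful that the disintegration / Radon--Nikodym derivative lands in the right algebra rather than merely in $L(H)$. Everything else --- normalization, positive-definiteness of the pieces, and the cyclic/separating consequences --- is routine and can be cited from the operator algebra references already listed (e.g.\ \cite{Tak}, using the disintegration theorem as the excerpt already signals).
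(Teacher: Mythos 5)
Your overall strategy is the standard direct argument, whereas the paper simply cites the disintegration theorem \cite[Theorem 8.21]{Tak} and gives no proof; so a self-contained argument like yours is a legitimate and arguably more informative route. The ``if'' direction (nontrivial central projection $Z$ yields $\chi=\alpha\chi_1+(1-\alpha)\chi_2$) is correct in outline, but your verification that $\chi_1\neq\chi_2$ is not an argument: two distinct vectors can induce the same vector state, and ``GNS uniqueness'' does not by itself produce a contradiction. The clean way is via faithfulness of the trace: if $\chi_1=\chi_2$ then $\chi_1=\chi$, so $(AZ\xi,\xi)=\alpha(A\xi,\xi)$ for all $A\in\mathcal M_\pi$ (extend from $\pi(G)$ by linearity and normality); taking $A=Z$ gives $tr(Z)=\alpha\, tr(Z)$, i.e.\ $\alpha=\alpha^2$, contradicting $0<\alpha<1$.

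The genuine gap is in the ``only if'' direction, at exactly the step you flag as delicate. The Radon--Nikodym operator $T$ obtained from $\alpha\chi_1\le\chi$ lies in $\pi(G)'=\mathcal M_\pi'$ because both sesquilinear forms are invariant under the left action; in particular $T$ \emph{already} commutes with $\pi(G)$, so the claim that centrality ``forces $T$ to commute \dots also with $\pi(G)$ itself'' is vacuous and proves nothing (and the displayed identity $(\pi(g)T\pi(h)\xi,\xi)=(\pi(gh)T\xi,\xi)\cdot(\ldots)$ does not parse). What you actually need is the opposite containment: $T$ must commute with $\mathcal M_\pi'$, so that $T\in\mathcal M_\pi''=\mathcal M_\pi$ and hence $T$ is central. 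For that, introduce the right regular representation $\pi'(g)$ on $H_\chi$, note that $\pi'(g)\pi(a)\xi=\pi(ag^{-1})\xi$, and compute matrix coefficients on the dense set $\pi(G)\xi$: one gets $(T\pi'(g)\pi(a)\xi,\pi(b)\xi)=\alpha\chi_1(b^{-1}ag^{-1})$ while $(\pi'(g)T\pi(a)\xi,\pi(b)\xi)=\alpha\chi_1(g^{-1}b^{-1}a)$, and these agree precisely because $\chi_1$ is central. Hence $T\in\pi'(G)'$, and since for a tracial cyclic--separating vector $\pi'(G)''=\mathcal M_\pi'$ (the commutation theorem, via the conjugation $J(A\xi)=A^*\xi$), this gives $T\in\mathcal M_\pi\cap\mathcal M_\pi'$. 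With that correction the factor hypothesis forces $T=cI$, $\chi_1=\chi$, and the contradiction you want; the rest of your write-up is fine.
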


If $\chi$ is an indecomposable character, then the constructed von Neumann algebra $\mathcal M_\pi$ is  a { finite-type} factor, i.e.  it is of type $I_n$, $n<\infty$, or of type $II_1$, see  definitions in \cite[Chapter 5]{Tak}.

 Conversely,   consider a finite-type factor representation $\pi$ of a group $G$ on a Hilbert space. Then there is a unique positive linear functional $tr$ on the algebra $\mathcal M_\pi$ with the properties $tr(I) = 1$,  $tr(AB) = tr(BA)$ for all $A,B\in \mathcal M_\pi$, and $tr(A^* A) > 0$ if $A \neq 0$, see \cite[Theorem 8.2.8]{kadison_ringrose:II}.  Applying the GNS construction (similar to the one above) to the algebra $\mathcal M_\pi$ (see \cite[Theorem 4.5.2]{kadison_ringrose:I}), we can find another realization of $\mathcal M_\pi$ on a Hilbert space $H$ such that $tr(A) = (A\xi,\xi)$ for some cyclic and separating vector $\xi\in H$. Hence $\chi(g) = tr(\pi(g)) = (\pi(g)\xi,\xi)$ is an indecomposable character on the group $G$.  We can summarize the discussion above in the following result.

 \begin{theorem}
(1) Indecomposable characters are in one-to-one correspondence with finite-type factor representations.

(2) Two factor representations $\mathcal M_{\pi_1}$ and $\mathcal M_{\pi_1}$ constructed by characters $\chi_1$ and $\chi_2$ are quasi-equivalent if and only if $\chi_1\equiv \chi_2$.

 \end{theorem}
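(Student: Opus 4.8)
The plan is to read the statement off from the GNS discussion preceding it, combined with the already-stated Proposition (that the representation $\pi$ built from $\chi$ is a factor representation precisely when $\chi$ is indecomposable) and the uniqueness of the normalized trace on a finite-type factor \cite[Theorem 8.2.8]{kadison_ringrose:II}. The only genuinely new ingredient is a quasi-equivalence argument identifying, for a given finite-type factor representation $\pi$, the GNS representation attached to the character $g\mapsto tr(\pi(g))$ with $\pi$ itself.

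First I would make the two maps precise and check they land where claimed. Given an indecomposable character $\chi$, the GNS construction produces $(\pi_\chi,H_\chi,\xi)$ with $\chi(g)=(\pi_\chi(g)\xi,\xi)$ and $\xi$ cyclic and separating for $\mathcal M_{\pi_\chi}$. Since the linear span of $\pi_\chi(G)$ is a weakly dense $*$-subalgebra of $\mathcal M_{\pi_\chi}$ (because $\pi_\chi$ is a homomorphism, so $\pi_\chi(g)\pi_\chi(h)=\pi_\chi(gh)$ and $\pi_\chi(g)^*=\pi_\chi(g^{-1})$), the centrality axiom $\chi(g_1g_2)=\chi(g_2g_1)$ extends by linearity and normality of the vector state to $tr(AB)=tr(BA)$ for $A,B\in\mathcal M_{\pi_\chi}$, where $tr(A)=(A\xi,\xi)$; this $tr$ is a faithful (because $\xi$ is separating) normal tracial state. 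By the Proposition above $\pi_\chi$ is a factor representation, and a factor admitting a faithful normal tracial state is of type $I_n$ ($n<\infty$) or $II_1$, hence $\pi_\chi$ is of finite type. Conversely, for a finite-type factor representation $\pi$ one takes the unique normalized trace $tr$ on $\mathcal M_\pi$ and sets $\chi(g)=tr(\pi(g))$: axiom (1) is the trace identity, axiom (3) is $tr(I)=1$, and axiom (2) holds since $\{\chi(g_ig_j^{-1})\}_{i,j}=\{tr(\pi(g_i)\pi(g_j)^*)\}_{i,j}$ is nonnegative-definite by positivity of $tr$.

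Next I would show the two maps are mutually inverse on quasi-equivalence classes. One composition is trivial: applying $tr$ to the GNS data of $\chi$ returns $(\pi_\chi(\cdot)\xi,\xi)=\chi$. For the other, start from a finite-type factor representation $\pi$, put $\chi=tr\circ\pi$, and consider $L_0(G)\ni f\mapsto\sum_g f(g)\pi(g)\in\mathcal M_\pi$. A direct computation gives $B_\chi(f,f)=tr\big((\sum_g f(g)\pi(g))(\sum_g f(g)\pi(g))^*\big)$, so this map descends to an isometry of $H_\chi$ onto the closure of $\mathcal M_\pi$ in the trace norm (density uses that the $*$-algebra generated by $\pi(G)$ is dense in the trace norm, by normality of $tr$ and Kaplansky density) and it intertwines $\pi_\chi$ with the action of $G$ on the standard form of $\mathcal M_\pi$; the generated von Neumann algebra is carried onto $\mathcal M_\pi$. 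Hence $\pi_\chi$ is quasi-equivalent to $\pi$, and in particular $\chi$ is indecomposable by the Proposition. This proves (1).

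For part (2): if $\chi_1\equiv\chi_2$ the GNS data coincide verbatim, hence the representations are unitarily, a fortiori quasi-, equivalent. Conversely, if $\omega:\mathcal M_{\pi_1}\to\mathcal M_{\pi_2}$ is a von Neumann algebra isomorphism with $\omega(\pi_1(g))=\pi_2(g)$ for all $g$, then $tr_2\circ\omega$ is a normalized trace on the finite-type factor $\mathcal M_{\pi_1}$, so $tr_2\circ\omega=tr_1$ by uniqueness, and therefore $\chi_1(g)=tr_1(\pi_1(g))=tr_2(\omega(\pi_1(g)))=tr_2(\pi_2(g))=\chi_2(g)$. I expect the only real obstacle to be the bookkeeping in the quasi-equivalence step — identifying $H_\chi$ with the standard Hilbert space of $(\mathcal M_\pi,tr)$ (the density point and the identification of the generated algebras) — together with the structural fact that a factor carrying a faithful normal tracial state is necessarily of finite type; the rest is unwinding definitions.
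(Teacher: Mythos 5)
Your proposal is correct and follows essentially the same route as the paper, which states this theorem as a summary of the preceding GNS discussion (character $\to$ GNS factor representation with cyclic separating trace vector; finite-type factor representation $\to$ unique normalized trace $\to$ character) without writing out a separate proof. You have simply filled in the details the paper leaves implicit, notably the explicit identification of $H_\chi$ with the standard form of $(\mathcal M_\pi,tr)$ for the quasi-equivalence step and the trace-uniqueness argument for part (2).
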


 Throughout the paper, the symbol  $\chi$ will be reserved for a indecomposable character of some full group $G$. We will also use the notation $(\pi,H,\xi)$ to denote  a representation $\pi$, Hilbert space $H$, and a cyclic and separating vector $\xi\in H$ obtained by the GNS-construction for $\chi$.


The following simple result shows that the continuity of the representation is equivalent to
the continuity of the corresponding character.

\begin{proposition}\label{PropositionAutomaticContinuity}
Let $G$ be a topological group. If a character $\chi:G\rightarrow \mathbb C$
is continuous, then the corresponding  factor representation $\pi: G \rightarrow \mathcal M_\pi$ is continuous.
\end{proposition}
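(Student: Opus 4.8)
The plan is to verify directly that $\pi$ is continuous in the weak operator topology, i.e.\ (by Definition~\ref{DefinitionRepresentation}) that for every pair of vectors $\eta_1,\eta_2\in H$ the scalar function $g\mapsto(\pi(g)\eta_1,\eta_2)$ is continuous on $G$. The key structural fact is that every $\pi(g)$ is unitary, so the family $\{\pi(g):g\in G\}$ is uniformly bounded in operator norm by $1$; this is what will let us reduce the problem to a dense set of vectors.

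First I would record the GNS identity $\chi(g)=(\pi(g)\xi,\xi)$ together with the observation that the linear span $D=\mathrm{span}\{\pi(h)\xi:h\in G\}$ is dense in $H$ (it is precisely the image of $L_0(G)$ in the GNS construction, since the image of the indicator of $\{h\}$ is $\pi(h)\xi$; equivalently, $\xi$ is cyclic). By sesquilinearity it then suffices to prove continuity of $g\mapsto(\pi(g)\eta_1,\eta_2)$ when $\eta_1=\pi(h_1)\xi$ and $\eta_2=\pi(h_2)\xi$ with $h_1,h_2\in G$. For such vectors
$$(\pi(g)\pi(h_1)\xi,\pi(h_2)\xi)=(\pi(h_2^{-1}gh_1)\xi,\xi)=\chi(h_2^{-1}gh_1),$$
and since multiplication and inversion in the topological group $G$ are continuous, $g\mapsto h_2^{-1}gh_1$ is continuous; composing with the continuous function $\chi$ gives continuity of $g\mapsto(\pi(g)\eta_1,\eta_2)$ on $D$.

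To pass from $D$ to all of $H$ I would use the standard $\eps/3$ argument: given $\eta_1,\eta_2\in H$, $g_0\in G$ and $\eps>0$, pick $\eta_1',\eta_2'\in D$ with $\|\eta_1-\eta_1'\|$ and $\|\eta_2-\eta_2'\|$ sufficiently small, write the increment $(\pi(g)\eta_1,\eta_2)-(\pi(g_0)\eta_1,\eta_2)$ as a telescoping sum of terms involving $\eta_1,\eta_1',\eta_2,\eta_2'$, bound the terms containing a difference $\eta_i-\eta_i'$ using $\|\pi(g)\|=\|\pi(g_0)\|=1$, and control the remaining term $(\pi(g)\eta_1',\eta_2')-(\pi(g_0)\eta_1',\eta_2')$ via the continuity on $D$ established above. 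This produces a neighbourhood of $g_0$ on which the increment is $<\eps$, giving continuity of $\pi$ at $g_0$. I do not anticipate any genuine obstacle here: the only step that uses anything beyond formal manipulation is the reduction to a dense set of vectors, which works precisely because $\pi$ takes values in the (uniformly bounded) unitary group; the rest is the GNS identity combined with joint continuity of the group operations.
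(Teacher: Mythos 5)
Your proof is correct and follows exactly the paper's own argument: reduce to vectors of the form $\pi(h_1)\xi,\pi(h_2)\xi$ via the identity $(\pi(g)\pi(h_1)\xi,\pi(h_2)\xi)=\chi(h_2^{-1}gh_1)$, then pass to all of $H$ by density of $\mathrm{Lin}\{\pi(h)\xi\}$ and uniform boundedness of the unitaries. The only difference is that you spell out the $\varepsilon/3$ approximation step that the paper leaves implicit.
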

\begin{proof} For any $h_1,h_2\in G$, the function $$f_{h_1,h_2}(g) := (\pi(g)\pi(h_1)\xi,\pi(h_2)\xi ) = \chi(h_2^{-1}gh_1)$$ is continuous in the group topology on $G$. Since $Lin\{\pi(h)\xi : h\in G)\}$ is dense in the Hilbert space $H$, we get that for any $x,y\in H$ the function $f_{x,y}(g) = (\pi(g)x,y)$ is continuous on $G$. This implies that  $\pi: G \rightarrow \mathcal M_\pi$ is continuous. \end{proof}


%
%

\section{Algebraic Properties of Full Groups}\label{SectionAlgebraicProperties}  In this section,  we establish several algebraical  results on full groups needed in consecutive sections. Additional algebraic and dynamical properties of full groups can be found in \cite{glasner_weiss:1995}, \cite{giordano_putnam_skau:1999}, \cite{matui:2006}, and \cite{bezuglyi_medynets:2008}. Recall that the symbol $G$ stands for the full group of a Bratteli diagram and $X$ for its path-space.  We always assume that the system $(X,G)$ is minimal.

\begin{definition} (1) For each element $g\in G$, set $supp(g) = \{x\in X : g(x)\neq x\}$ and $Fix(g) = \{x\in X : g(x) = x\}$, termed the {\it support} of $g$ and the {\it set of fixed points of $g$}, respectively. Note that $supp(g)$ and $Fix(g)$ are clopen sets.

(2) We will denote by $\mathcal M(G)$ and $\mathcal E(G)$ the set of all probability  $G$-invariant and probability ergodic   $G$-invariant measures, respectively.

\end{definition}

The following lemma is a crucial technical result that shows that if $\mu(A)<\mu(B)$ for all $\mu\in\mathcal M(G)$, then $B$ contains a  subset $G$-equivalent to the set $A$, for the proof see \cite[Lemma 2.5]{glasner_weiss:1995}.


\begin{lemma}\label{LemmaMain} Let $A$ and $B$ be clopen subsets of $X$.
 If  $\mu(A)<\mu(B)$ for all $\mu\in\mathcal M(G)$, then there is an involution $g\in G$ with $g(A)\subset B$ and $supp(g)\subset A\cup B$.
\end{lemma}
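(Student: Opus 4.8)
The plan is to translate the measure-theoretic hypothesis into a purely combinatorial statement about path counts at a single finite level of the diagram, and then to build the involution vertex by vertex at that level and glue the pieces together using the gluing property of full groups stated after Definition \ref{DefinitionFullGroup}.

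First I would reduce to cylinders: since $A$ and $B$ are clopen, fix a level $n$ so that both are compatible with the partition $\Xi_n$, write $A$ and $B$ as disjoint unions of cylinders $C_{v,i}^{(n)}$, and for each $v\in V_n$ let $a_v$ (resp. $b_v$) be the number of those cylinders with range vertex $v$. For $m\ge n$ and $w\in V_m$, let $a_w^{(m)}$ (resp. $b_w^{(m)}$) be the number of finite paths from $v_0$ to $w$ whose truncation to level $n$ lies in $A$ (resp. $B$); thus $a_w^{(m)}=\sum_{v\in V_n}a_v\cdot\#\{\text{paths }v\to w\}$, and similarly for $b$. The key step, which I expect to be the main obstacle, is to show that there is a level $m$ with $a_w^{(m)}<b_w^{(m)}$ for \emph{every} $w\in V_m$. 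I would argue by contradiction: if for each $m$ there were a vertex $w_m\in V_m$ with $a_{w_m}^{(m)}\ge b_{w_m}^{(m)}$, consider for $j\le m$ the probability distribution $\nu_m^{(j)}$ on $V_j$ given by the level-$j$ vertex of a uniformly chosen finite path from $v_0$ to $w_m$. By a diagonal compactness argument, pass to a subsequence $(w_{m_k})$ along which $\nu_{m_k}^{(j)}$ converges for all $j$. The limiting distributions are compatible between consecutive levels, because the identity $\#\{v\to w\}=\sum_{u\in V_{j+1}}f_{u,v}^{(j+1)}\,\#\{u\to w\}$ passes to the limit; hence they define a finitely additive function of total mass one on the algebra of clopen sets, i.e. a Borel probability measure $\mu$ on $X$. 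Since $\mu$ is constant on the cylinders of each $\Xi_j$, it is $G_j$-invariant for all $j$, so $\mu\in\mathcal M(G)$. But $a_{w_{m_k}}^{(m_k)}/h_{w_{m_k}}^{(m_k)}=\sum_{v\in V_n}a_v\,\#\{v\to w_{m_k}\}/h_{w_{m_k}}^{(m_k)}\to\sum_{v}a_v\,\mu(C_{v,0}^{(n)})=\mu(A)$, and likewise the $B$-ratios tend to $\mu(B)$; since each ratio difference is nonnegative, $\mu(A)\ge\mu(B)$, contradicting the hypothesis.

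Finally, fix such a level $m$ and use $G_m=\prod_{w\in V_m}G_m^{(v)}$. For each $w$ work inside the finite set $M_m^{(w)}$ of finite paths $v_0\to w$: let $P\subset M_m^{(w)}$ be the $a_w^{(m)}$ paths lying in $A$ and $Q\subset M_m^{(w)}$ the $b_w^{(m)}$ paths lying in $B$. Let $g_w\in G_m^{(v)}$ (with $v=w$) fix $P\cap Q$ and every path outside $P\cup Q$, and pair off $P\sm Q$ with a subset of $Q\sm P$; this is possible since $|P\sm Q|=a_w^{(m)}-|P\cap Q|<b_w^{(m)}-|P\cap Q|=|Q\sm P|$, and it produces an involution that maps the $A$-part of $M_m^{(w)}$ into the $B$-part and has support contained in the union over $w$ of the $A$- and $B$-cylinders. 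Gluing the $g_w$ over all $w\in V_m$ yields an involution $g\in G_m\subset G$ with $g(A)\subset B$ and $supp(g)\subset A\cup B$, as required. The reduction to cylinders in the first paragraph and the pairing-off construction here are routine; the genuine content is the passage, via the empirical-measure argument, from the inequality of invariant measures to the finite-level count inequality.
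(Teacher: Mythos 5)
Your proof is correct. Note that the paper does not prove this lemma itself: it is quoted from \cite{glasner_weiss:1995} (Lemma 2.5), where the analogous statement is established for full groups of Cantor minimal systems using compactness of the set of invariant measures together with a Rokhlin--tower type argument. Your argument is a self-contained replacement adapted to the AF setting, and its key step --- extracting, from a sequence of vertices $w_m$ violating the count inequality, a weak-$*$ limit of the normalized path-counting distributions and recognizing the limit as a $G$-invariant probability measure --- is exactly the standard correspondence between $\mathcal M(G)$ and limits of normalized products of incidence matrices (cf.\ \cite{bezuglyi_kwiatkowski_medynets_solomyak:2010} or \cite{vershik_kerov:1981}); the reduction to a single-level count inequality and the vertex-by-vertex pairing followed by gluing are both sound. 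The only imprecision is the phrase ``$\mu$ is constant on the cylinders of each $\Xi_j$'': the limit measure is constant only on the cylinders of $\Xi_j$ sharing the same terminal vertex in $V_j$ (cylinders through different vertices generally carry different masses), but since every element of $G_j$ preserves the terminal vertex, this weaker constancy is precisely what $G_j$-invariance requires, so the conclusion stands. You could also note that the strict inequality $a_w^{(m)}<b_w^{(m)}$ at every vertex is more than you need --- $a_w^{(m)}\le b_w^{(m)}$ at the vertices where $a_w^{(m)}>0$ already suffices for the pairing --- but proving the stronger statement costs nothing.
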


The following lemma immediately follows from the minimality of the system $(X,G)$ and the weak-* compactness of the set $\mathcal M(G)$.

 \begin{lemma}For each clopen set $A\subset X$, $A\neq \varnothing$
the number $\delta(A) = \inf\{\mu(A) : \mu\in \mathcal M(G)\}$ is strictly positive.
 \end{lemma}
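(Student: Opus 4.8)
\emph{Proof proposal.} The plan is to use minimality to cover the compact space $X$ by finitely many $G$-translates of $A$, and then exploit $G$-invariance of the measures to turn this into a uniform lower bound on $\mu(A)$.

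First I would observe that since $g$ is a homeomorphism and $A$ is clopen, each translate $g(A)$ is again a clopen subset of $X$. I claim that $X=\bigcup_{g\in G}g(A)$. Indeed, fix $x\in X$. By minimality of the system $(X,G)$ the orbit $Orb_G(x)$ is dense in $X$, so it meets the nonempty open set $A$; that is, there is $h\in G$ with $h(x)\in A$, equivalently $x\in h^{-1}(A)=g(A)$ for $g=h^{-1}$. Thus $\{g(A):g\in G\}$ is an open cover of $X$. Since $X$ is compact, there exist finitely many $g_1,\dots,g_n\in G$ with
$$X=g_1(A)\cup\cdots\cup g_n(A).$$

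Now let $\mu\in\mathcal M(G)$ be arbitrary. Using subadditivity of $\mu$ together with the $G$-invariance $\mu(g_i(A))=\mu(A)$, we get
$$1=\mu(X)\le\sum_{i=1}^n\mu\big(g_i(A)\big)=n\,\mu(A),$$
so $\mu(A)\ge 1/n$. Since $n$ depends only on $A$ (through the chosen finite subcover) and not on $\mu$, it follows that $\delta(A)=\inf\{\mu(A):\mu\in\mathcal M(G)\}\ge 1/n>0$. (Note $\mathcal M(G)\neq\varnothing$ by amenability of $G$, as recorded earlier, so the infimum is genuinely over a nonempty set; in any case the bound is uniform.)

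There is essentially no serious obstacle here: the only point requiring a little care is ensuring that the translates $g(A)$ are clopen (hence measurable and of the same measure as $A$) and that the cover can be taken finite, which is where compactness of the Cantor set $X$ enters. Alternatively, one can phrase the argument through weak-$*$ compactness of $\mathcal M(G)$: the map $\mu\mapsto\mu(A)$ is weak-$*$ continuous because $\mathbf 1_A$ is continuous, so the infimum is attained at some $\mu_0\in\mathcal M(G)$; and for a minimal system every invariant measure has full support (its support is a closed $G$-invariant set, hence all of $X$), whence $\mu_0(A)>0$ since $A$ is open and nonempty. I would include the first, more elementary argument as the proof and perhaps remark on the second.
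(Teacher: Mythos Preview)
Your proof is correct. Both arguments you give are valid.

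The paper, however, takes the second route you sketch as an alternative: it states that the lemma ``immediately follows from the minimality of the system $(X,G)$ and the weak-$*$ compactness of the set $\mathcal M(G)$,'' i.e.\ the infimum is attained at some $\mu_0$, and minimality forces every invariant measure to have full support, so $\mu_0(A)>0$. Your primary argument via a finite cover $X=g_1(A)\cup\cdots\cup g_n(A)$ is more elementary and has the added benefit of yielding an explicit uniform lower bound $\delta(A)\ge 1/n$, which the compactness argument does not. The paper's approach is shorter to state but less constructive. Either is perfectly adequate here; you might simply swap which one you present as primary if you want to match the paper's phrasing.
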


 For any two elements $g,h\in G$, set $$D(g,h) =\sup_{\mu\in\mathcal E(G)} \mu(\{x\in X : g(x)\neq h(x)\}).$$ Observe that the function $D$ is invariant with respect to the group operations, i.e. $D(g,h) = D(qg,qh) = D(gq,hq) = D(g,h) = D(g^{-1},h^{-1})$ for every $g,h,q\in G$, see, for example, the chapter on the uniform topology in \cite{halmos:book}.  Since for every clopen set $A$ the value  $\mu(A)$ is strictly positive for all $\mu\in\mathcal M(G)$, we get that $D(g,h) = 0$ implies $g = h$. Furhermore, using the same ideas as in  \cite{halmos:book}, one can check that $D$ satisfies the triangle inequality.  This shows that $G$ is a topological group with respect to the metric $D$.  Following the traditions of ergodic theory, we  give the following definition.

 \begin{definition}\label{DefinitionUniformTopology} The topology generated by the metric $D$ is called the {\it uniform topology}.
 \end{definition}

\begin{definition}\label{DefinitionEvenCycles} (1)  We say that elements $g$ and $h$ are {\it $\varepsilon$-conjugate} if there is $q\in G$ with $D(qgq^{-1},h)<\varepsilon$.

(2) We say that an element $g$ {\it consists of even cycles} if the $g$-period of every point $x\in X$ is equal to one or to an even number. The word ``even'' refers to the length of the cycle and not to the parity of a permutation.
\end{definition}

  Given $n\geq 1$ and a clopen set $A$, define a subgroup  $G_n(A)$ as the set of all elements $g\in G_n$ such that $supp(g)\subset A$. Notice that $G_n(A) \subset G_{n+1}(A)$. Set $G(A) = \bigcup_{n\geq 1}G_n(A)$.  We will refer to groups $G(A)$ as to  {\it local subgroups}. The following technical lemmas show  connections between the algebraic structure of the full group and its action on the Bratteli diagram.

%

\begin{lemma}\label{LemmaSequenceInvolutions} Let $G$ be the full group of a Bratteli diagram $B$. Let $A$ be a clopen subset of the path-space $X$. There exists a sequence of involutions $\{h_n\}_{n\geq 1}\subset G(A)$ satisfying the following conditions:
\begin{itemize}

\item[(i)] $h_n$ commutes with every element of $G_n(A)$;

\item[(ii)] for every involution $s\in G_n(A)$ the elements $s h_n$ and $h_n$ are $1/n$-conjugate;

\item[(iii)]  if an element $s\in G_n(A)$ with $supp(s) = A$
consists of even cycles, then the elements $s h_n$ and $s$ are conjugate;

 \item[(iv)] the elements $h_{n_1}h_{n_2}h_{n_3}$ and $h_{n_1}h_{n_2}$, $n_1<n_2<n_3$, are $1/{n_1}$-conjugate;

 \item[(v)] the elements $h_{n_1}$ and $h_{n_2}$, $n_1<n_2$, are $1/{n_1}$-conjugate.

\end{itemize}
All elements implementing the conjugations above are taken from the group $G(A)$.

If the full group $G$ is simple, then,  additionally, all $1/n$-conjugations in the statements  (ii) --- (v) are  conjugations.  For example, the property (ii) states, in this case, that elements $sh_n$ and $h_n$ are, in fact, conjugate.
\end{lemma}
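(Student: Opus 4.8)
The plan is to build the involutions $h_n$ explicitly as ``tail shuffles'' of the paths lying in $A$, and then to read off all of (i)--(v) from a single elementary fact about cycle types of product permutations; the strengthening for simple $G$ will fall out of the same construction applied to an even diagram. First the reductions. Telescoping alters neither $X$ nor $G$ nor $G(A)$, so, using Proposition \ref{PropositionSimplicityEvenDiagrams}, I may assume the diagram is \emph{even} whenever $G$ is simple, and in every case---since the diagram is simple, i.e.\ $(X,G)$ is minimal---that $\min_{w\in V_m}h_w^{(m)}\ra\infty$ as $m\ra\infty$. Fix a level $k$ with $A$ compatible with $\Xi_k$. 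Two facts will be used repeatedly: every $\mu\in\mathcal M(G)$ is non-atomic (by minimality an atom would force infinitely many atoms of equal mass); and, by $G$-invariance, any two cylinders of the same depth $m$ with the same range vertex $w$ are equimeasurable, so a union of depth-$m$ cylinders containing at most $L$ of them over each range vertex has $\mu$-measure $\le L/\min_w h_w^{(m)}$, uniformly in $\mu$.

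Construction. Put $m_0=k$ and, recursively, $\ell_n=\max(n,k,m_{n-1})$ and $m_n>\ell_n$ (to be fixed large below). For each pair $(u,w)\in V_{\ell_n}\times V_{m_n}$ fix an involution $\sig^{(n)}_{u,w}$ of the finite set of paths from $u$ to $w$ of length $m_n-\ell_n$, fixed-point-free when that set has even cardinality (automatic if the diagram is even) and with exactly one fixed point otherwise. Let $h_n$ fix every $x\notin A$ and send each $x\in A$ to the path obtained by replacing the edges of $x$ at levels $\ell_n+1,\dots,m_n$ with their image under $\sig^{(n)}_{r(x_{\ell_n}),r(x_{m_n})}$. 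Since $\ell_n\ge k$, the value $h_n(x)$ stays in the same depth-$k$ cylinder as $x$, so by the gluing property of full groups $h_n$ is a well-defined involution in $G_{m_n}(A)\subset G(A)$. As the blocks $(\ell_n,m_n]$ are pairwise disjoint and $\ell_n\ge n$, distinct $h_n$ commute and every $h_n$ commutes with $G_n(A)$, which is (i). Finally, $Fix(h_n)\cap A$ is a union of depth-$m_n$ cylinders with at most $L_n$ of them over each range vertex, $L_n$ being the number of length-$\ell_n$ paths lying in $A$; so we may choose $m_n$ so large that $N_n:=Fix(h_n)\cap A$ satisfies $\sup_{\mu\in\mathcal M(G)}\mu(N_n)<1/(2n)$.

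Verification of (ii)--(v). The key elementary fact: if $\al$ is a permutation of a finite set $P$ and $\tau$ an involution of a finite set $Q$, then the product permutation $\al\times\tau$ of $P\times Q$ is an involution whenever $\al$ is, is fixed-point-free whenever $\tau$ is, and---provided every cycle of $\al$ has even length---has the same cycle type as $\al\times\mathrm{id}_Q$ regardless of $\tau$ (a $c$-cycle of $\al$ paired with a $d$-cycle of $\tau$, $d\in\{1,2\}$, gives $\gcd(c,d)$ cycles of length $\mathrm{lcm}(c,d)=c$). Apply this at each range vertex $w$ of a suitable common level $m$: the set $P_w(A)$ of depth-$m$ paths through $w$ lying in $A$ is a disjoint union, over intermediate vertices, of products (initial segments in $A$)$\,\times\,$(middle edge-blocks), and on each factor the element under study acts as $\bar s\times\sig$, with $\bar s$ its action on initial segments. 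For (iii), $\bar s$ consists of even-length cycles (because $supp(s)=A$ and $s$ consists of even cycles), so $sh_n$ and $s$ have equal cycle type over every $w$ and both fix $P_w(X)\sm P_w(A)$; hence they are conjugate inside $G_{m_n}(A)\subset G(A)$---exactly, and irrespective of evenness. For (ii), $\bar s$ is an involution, so $sh_n,h_n$ are involutions whose numbers of transpositions over any $w$ differ by at most $L_n/2$ (the defect living over the range vertices at which some $\sig^{(n)}_{u,w}$ has its fixed point); conjugating by an element of $G(A)$ makes $sh_n$ equal $h_n$ off a union of depth-$m_n$ cylinders, at most $L_n$ per range vertex, a set of measure $<1/n$, so $sh_n$ and $h_n$ are $1/n$-conjugate in $G(A)$; when the diagram is even every $\sig^{(n)}_{u,w}$, hence each of $sh_n,h_n$ on $A$, is fixed-point-free, all transposition counts equal $|P_w(A)|/2$, and the conjugacy is exact. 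For (iv) and (v), each of $h_{n_1}h_{n_2}$, $h_{n_1}h_{n_2}h_{n_3}$ (and each of $h_{n_1}$, $h_{n_2}$) is an involution supported in $A$ which, because the innermost block $\sig^{(n_1)}$ is fixed-point-free off $N_{n_1}$ and untouched by the other factors, is fixed-point-free on $A\sm N_{n_1}$ (respectively on $A\sm N_{n_1}$ and on $A\sm N_{n_2}$); hence the two elements being compared, after conjugating by a suitable element of $G(A)$, agree off a set of measure at most $\mu(N_{n_1})$ (resp.\ $\mu(N_{n_1})+\mu(N_{n_2})$), i.e.\ are $1/n_1$-conjugate---and exactly conjugate when the diagram is even, since then all $N_{n_i}=\varnothing$.

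The construction itself is routine; the genuinely delicate point is the split between the two regimes. Genuine conjugacy in (ii), (iv), (v) rests on the equalities of cycle type, which require the auxiliary involutions $\sig^{(n)}_{u,w}$ to be fixed-point-free---available only after passing to an even diagram, hence only for simple $G$. For general $G$ one must instead keep the conjugating elements inside $G(A)$ while controlling, \emph{uniformly over all invariant measures}, the measure of the set on which they fail to conjugate one element into the other; this is precisely what the choice of the target levels $m_n$ delivers, via non-atomicity and $\min_w h_w^{(m)}\ra\infty$. The recursive choice of the pairwise disjoint blocks $(\ell_n,m_n]$ is the device that reduces (iv) and (v), which concern products and pairs of the $h_n$, to the very same transposition count as (ii).
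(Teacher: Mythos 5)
Your proof is correct and follows essentially the same strategy as the paper's: build $h_n$ as an involution shuffling a fresh block of edges via (near-)fixed-point-free involutions of the edge/path sets over each vertex pair, reduce all conjugacy claims to matching cycle types and fixed-point counts in $\prod_{w}S\bigl(M_m^{(w)}\bigr)$, and obtain exact conjugacy in the simple case by telescoping to an even diagram. The only difference is bookkeeping: the paper telescopes so that consecutive levels have more than $n$ edges between any two vertices and bounds the fixed-point \emph{proportion} directly, whereas you shuffle a multi-level block and bound the defect by a fixed number of cylinders of arbitrarily small measure; both yield the same $1/n$ estimate.
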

\begin{proof}  To simplify the notation, we will assume that the set $A$ coincides with the entire space $X$ and $G(A) =G$.

 For $m>n$, $v\in V_n$, and $w\in V_m$ denote by
$C_{v,w}$  the set of finite paths between $v$ and $w$.
By minimality, we can telescope the diagram $B$
(denote the new diagram by the same symbol $B$) so that for each level $n$ and vertices
$v\in V_n$ and $w\in V_{n+1}$ we have that $card(C_{v,w}) > n $.

Fix an integer $n\geq 1$. For each pair of vertices $v\in V_n$ and $w\in
V_{n+1}$, find two disjoint subsets
  $C_{v,w}^{(1)},C_{v,w}^{(2)}\subset C_{v,w}$ of equal cardinality such that
$card(C_{v,w}\setminus (C_{v,w}^{(1)}\cup C_{v,w}^{(2)}))\leq 1$.
Let $h_{v,w}:C_{v,w}\to C_{v,w}$ be an involution $\left(h_{v,w}^2=id\right)$, which maps $C_{v,w}^{(1)}$ onto $C_{v,w}^{(2)}$. Denote the
elements $h_n\in G_{n+1}$ as follows \begin{eqnarray}
\label{Equationhn}h_n(e_1,\ldots,
e_n,e_{n+1},e_{n+2}\ldots)=(e_1,\ldots,
e_n,h_{v,w}(e_{n+1}),e_{n+2},\ldots),
\end{eqnarray}  where $(e_1,e_2,\ldots)$ is any infinite path and
$v=s(e_{n+1}),w=r(e_{n+1})$. We will show that the sequences $\{h_n\}_{n\geq 1}$ satisfies all the properties of the lemma. The construction of
$h_n$ implies that the element $h_n$ commutes with all elements of $G_n$. This  proves the property (i).

   Recall that
$G_n = \prod\limits_{v\in V_n}G_n^{(v)}$ and $G_n^{(v)}$ is isomorphic to
the group of permutations $S\left(M_n^{(v)}\right)$ (see Subsection
\ref{SubsectionInductiveLimits}). Observe that two elements $g_1,g_2\in G_n$
 are conjugate if and only if for each $v\in V_n$ the corresponding
permutations from $S\left(M_n^{(v)}\right)$ are conjugate. In particular,
 if $g_1$ and $g_2$ are involutions, they are conjugate if and only if
for each $v\in V_n$ the elements  $g_1$
 and $g_2$ have the same number of  fixed paths in $M_n^{(v)}$.

Fix an involution $s\in G_n$. Then the element $sh_n$ fixes a  path $(e_1,\ldots,e_{n+1})$
 if and only if the permutation $s$ fixes the path $(e_1,\ldots,e_n)$  and the involution $h_{v,w}$ fixes the edge
 $e_{n+1}$, where $v=s(e_n),w=r(e_{n+1})$.
It follows from the definition  of $h_n$ that for each $w\in V_{n+1}$  the number of fixed paths
in $M_{n+1}^{(w)}$ (for both  $h_n$ and $sh_n$) is at most $card(M_{n+1}^{(w)})/n$. Therefore, we can find
an involution $h'_n\in G_{n+1}$ such that for each $w\in V_{n+1}$

\begin{enumerate}

\item[(1)]  $h_n'$ differs from
$h_n$ on $M_{n+1}^{(w)}$ on at most $card(M_{n+1}^{(w)})/n$ paths;

\item[(2)] $h_n'$ has the same number of fixed paths in $M_{n+1}^{(w)}$
as $sh_n$.
\end{enumerate}

 Then $h'_n$ is conjugate to $sh_n$. Fix a $G$-invariant measure $\mu$ on $X$. For a vertex $w\in W_{n+1}$, denote by $X_w$  the set of all paths
 coming though the vertex $w\in W_{n+1}$. Set $\alpha_w = \mu(X_w)$. Observe that $\sum_{w\in W_{n+1} }\alpha_w = 1$ and $\mu([x_w]) = \alpha/card(M_{n+1}^{(w)})$ for any finite path from the root vertex $v_0$ to the vertex $w$. Here $[x_w]$ is the cylinder set of infinite paths coinciding with $x_w$. Therefore,
 \begin{eqnarray*}\mu(\{x\in X:h_n(x)\neq h'_n(x)\}) & = & \sum_{w\in W_{n+1} } \mu(\{x\in X_w:h_n(x)\neq h'_n(x)\}) \\
 & \leq & \sum_{w\in W_{n+1} }\frac{\alpha_w}{card(M_{n+1}^{(w)})} \cdot \frac{card(M_{n+1}^{(w)})}{n}\\
 & = & 1/n.
 \end{eqnarray*}
This proves (ii).

Now let $s\in G_n$ consist of even cycles and $supp(s)=X$. Let $x=(x_1,\ldots,x_{n+1})$
be a finite path and $x'=(x_1,\ldots,x_n)$. Then, by definition of $h_n$, we have that
$$(sh_n)^l(x)=\left\{\begin{array}{ll}(s^l(x'),h_{v,w}(x_{n+1})),
&\text{if}\;l\;\text{is odd},\\
(s^l(x'),x_{n+1}),
&\text{if}\;l\;\text{is even},
\end{array}\right. $$ where $v=s(x_{n+1}),w=r(x_{n+1})$. Observe that the length
of the orbit of $x$ under $s$ is
even. It follows that the orbits of $x$ under the actions of $s$ and $sh_n$ have
the same lengths. Therefore, for any vertex $w\in V_{n+1}$ the elements $s$ and $sh_n$ induce
conjugate permutations on $M_{n+1}^{(w)}$, which proves the property (iii).

The properties (iv) and (v) can be established similarly to
(ii). We leave the details to the reader.
\end{proof}


\begin{lemma}\label{lemmaExistenceSI}
Let $s\in G$ and $A = supp(s)$. Then for any $r\in \mathbb{N}$
and any $\varepsilon>0$ there exist elements
$s_1,\ldots,s_{2^r}\in G$ satisfying the following conditions:

\begin{itemize}
\item[(1)] each element $s_i$ is conjugate to $s$;
\item[(2)] $supp(s_i)=A$ and $\mu(A\setminus supp\left(s_is_j^{-1}\right))<\varepsilon$ for any   $1\leq i\neq j\leq 2^r$ and $\mu\in \mathcal M(G)$;
\item[(3)] the element $s_is_j^{-1}$, for any $i\neq j$, consists of even cycles (Definition \ref{DefinitionEvenCycles}).
\end{itemize}
\end{lemma}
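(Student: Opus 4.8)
The plan is to realize the $s_i$ as genuine conjugates $s_a=\rho_a\,s\,\rho_a^{-1}$, $a\in(\mathbb{Z}/2)^r$, of $s$ by elements $\rho_a\in G(A)$, and to reduce properties (2)--(3) to a finite combinatorial statement about a single symmetric group. The appearance of exactly $2^r$ elements is the crucial hint: I index the conjugates by the group $(\mathbb{Z}/2)^r$ and arrange that every pairwise quotient $s_a s_b^{-1}$ is, outside a set of small measure, a \emph{fixed-point-free involution}. This simultaneously yields (3) (all nontrivial cycles have length $2$, hence are even) and (2) (almost every point of $A$ is moved). First I would telescope $B$ so that $s\in G_n$ and $A$ is a union of level-$n$ cylinders; writing $P_v\subset M_n^{(v)}$ for the set of paths to $v\in V_n$ moved by $s$, the element $s$ induces on each $P_v$ a fixed-point-free permutation $\sigma_v$. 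Telescoping further, I may assume $card(C_{v,w})$ is as large as I wish for all $v\in V_n$, $w\in V_m$. On the level-$m$ cylinders contained in $A$ and passing through a fixed $w$, the set splits as $\bigsqcup_v P_v\times C_{v,w}$ and $s$ acts as $\sigma_v\times\mathrm{id}_{C_{v,w}}$, i.e.\ as many identical copies of $\sigma_v$; this product structure is what I exploit.

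I take $\rho_a$ to act fibrewise over the base, $\rho_a(p,c)=(p,\pi^a_p(c))$ with $\pi^a_p\in S(C_{v,w})$; such $\rho_a$ lie in $G_m(A)$ since they permute level-$m$ cylinders preserving both the head $p$ and the range vertex $w$. A direct computation shows that $s_a$ is the skew product $s_a(p,c)=(\sigma_v(p),\beta^a_p(c))$ with $\beta^a_p=\pi^a_{\sigma_v(p)}(\pi^a_p)^{-1}$, and --- the key point --- that
\[
s_a s_b^{-1}(p,c)=\bigl(p,\ \beta^a_{\sigma_v^{-1}(p)}(\beta^b_{\sigma_v^{-1}(p)})^{-1}(c)\bigr),
\]
so $s_a s_b^{-1}$ preserves the base and acts on each fibre $\{p\}\times C_{v,w}$ by the single permutation $\gamma^{ab}_p:=\beta^a_{\sigma_v^{-1}(p)}(\beta^b_{\sigma_v^{-1}(p)})^{-1}$. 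Hence the cycle structure of $s_a s_b^{-1}$ is the disjoint union over base points of the cycle structures of the $\gamma^{ab}_p$. I would therefore choose, at each base point $p'$, the assignment $a\mapsto\beta^a_{p'}$ to be a free action of $(\mathbb{Z}/2)^r$ on $C_{v,w}$ (translations, after identifying part of $C_{v,w}$ with a copy of $(\mathbb{Z}/2)^{2r}$); then $\gamma^{ab}_p=\beta^a_{p'}(\beta^b_{p'})^{-1}$ is translation by $a+b\neq 0$, a fixed-point-free involution, and $s_a s_b^{-1}$ becomes a product of $2$-cycles.

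The main difficulty, and the step I expect to be most delicate, is that the prescribed fibre data $\{\beta^a_{p'}\}$ must be a genuine coboundary, i.e.\ arise from fibre permutations $\pi^a_\bullet$; along a $\sigma_v$-cycle $(p'_0,\dots,p'_{\ell-1})$ this is solvable precisely when $\beta^a_{p'_{\ell-1}}\cdots\beta^a_{p'_0}=\mathrm{id}$ for every $a$. If I use the same free action at every site this product equals translation by $\ell\cdot a$, which vanishes only when $\ell$ is even; the odd cycles of $s$ are exactly the obstruction (a single global $2$-colouring, swapped by all quotients, is impossible once there are three or more conjugates). I resolve this by letting the free action vary with the position in the cycle: identifying the relevant part of $C_{v,w}$ with $(\mathbb{Z}/2)^{2r}$ acting by translations, a free action at site $k$ is an injective linear map $L_k:(\mathbb{Z}/2)^r\to(\mathbb{Z}/2)^{2r}$, and the cocycle condition becomes $\sum_{k=0}^{\ell-1}L_k=0$ while each quotient remains translation by the injective image $L_k(a+b)\neq 0$. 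For every $\ell\ge 2$ such injective maps summing to zero exist (pair equal maps to absorb an even count, and insert one triple $L,\,L',\,L+L'$ to absorb an odd count), which settles both parities uniformly.

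Finally I would verify (2). Since $card(C_{v,w})$ need not be divisible by $2^{2r}$, I discard at most $2^{2r}$ tails in each block and set $\rho_a=\mathrm{id}$ there, so $s_a s_b^{-1}$ is the identity on the discarded part; the fixed-point set of $s_a s_b^{-1}$ inside $A$ is then contained in these discarded cylinders. Using that every $G$-invariant measure assigns equal mass to cofinal level-$m$ cylinders (they are $G$-equivalent), the $\mu$-measure of this set is at most $2^{2r}/N$ for all $\mu\in\mathcal{M}(G)$ simultaneously, where $N=\min_{v,w}card(C_{v,w})$; choosing $N>2^{2r}/\varepsilon$ at the telescoping stage gives $\mu(A\setminus supp(s_a s_b^{-1}))<\varepsilon$. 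As each $s_a=\rho_a s\rho_a^{-1}$ is conjugate to $s$ in $G$ with $supp(s_a)=\rho_a(A)=A$, the family $\{s_a:a\in(\mathbb{Z}/2)^r\}$, of cardinality $2^r$, satisfies (1), (2) and (3); note that no simplicity of $G$ is used, only minimality and telescoping.
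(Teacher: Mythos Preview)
Your argument is correct, and it follows a genuinely different route from the paper's. The paper first decomposes $A$ according to the $s$-period and reduces to the case where every point of $A$ has the same period $p$; it then telescopes so that $s\in G_1$ and, at each of $r$ further levels $2,\ldots,r+1$, constructs a pair $g_i^{(0)},g_i^{(1)}\in G_{i-1,i}$ of elements made up of $p$-cycles such that $g_i^{(0)}(g_i^{(1)})^{-1}$ has even cycles and nearly full support (this is reduced to an explicit fact about two $p$-cycles in a single finite symmetric group). The $2^r$ elements are then \emph{products} $s_a=s\,g_2^{(a_1)}\cdots g_{r+1}^{(a_r)}$: since the factors act on disjoint coordinates they commute, and $s_a$ again consists of $p$-cycles on $A$, hence is conjugate to $s$; the quotient $s_as_b^{-1}$ is $\prod_i g_{i+1}^{(a_i)}(g_{i+1}^{(b_i)})^{-1}$ and inherits even cycle lengths from any coordinate with $a_i\neq b_i$. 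By contrast, you pass to a single deeper level, realise the $s_a$ as honest \emph{conjugates} $\rho_a s\rho_a^{-1}$ by fibre-permuting elements, and arrange every nontrivial cycle of $s_as_b^{-1}$ to have length exactly $2$; the cost is the coboundary constraint $\beta^a_{p'_{\ell-1}}\cdots\beta^a_{p'_0}=\mathrm{id}$ along each $\sigma_v$-cycle, which you resolve by the $\mathbb{F}_2$-linear trick of choosing injective maps $L_0,\ldots,L_{\ell-1}:(\mathbb{Z}/2)^r\to(\mathbb{Z}/2)^{2r}$ summing to zero. The paper's use of $r$ independent levels makes the commutation automatic and avoids this coboundary problem altogether, whereas your approach dispenses with the period-by-period reduction, treats all cycle lengths of $s$ uniformly, and produces quotients with the simplest possible even-cycle structure (involutions).
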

\begin{proof}  Denote by $A_p$ the set of all points $x\in X$
 such that $s^p(x) =x$ and $s^l(x)\neq x$ for $1\leq l\leq p-1$.
Then $X = \bigsqcup A_p$ is a clopen partition.
Clearly, it is enough to prove  the lemma for each restriction
$s|_{A_p}$ and ensure that the elements $s_i^{(p)}$ are taken from $G(A_p)$. Then  $s_i = \prod_p s_i^{(p)}$, $i=1,\ldots, 2^r$,  will be the desired elements. So, without loss of generality, we will assume that there exists an integer $p\geq 2$ such that $X = A_p$.

We will use the notation $(k_0,k_1\ldots,k_{l-1})$ to  denote
 the cyclic permutation of  symbols $\{k_0,\ldots, k_{l-1}\}$. For example, $(k,l)$  will stand for the
transposition of $k$ and $l$.  Set also $M_n = \{0,1\ldots, n-1\}$. The symmetric group on $M_n$ will be denoted by $S(M_n)$.
 The  proof of the lemma will be based on the following combinatorial observation.

\bigskip\noindent {\bf Claim 1.} {\it Let $p\geq 2$ be an integer. Set $m=2p$ if $p$ is even and $m = 2p-2$ if $p$ is odd.  There exists  permutations $h_0,h_1\in S(M_{m})$ such that (i) each  $h_i$ is a cycle of length $p$; (ii) each independent cycle of $h_0 h_1^{-1}$ is of even length; and (iii)
 $card(supp(h_0 h_1^{-1})) = m$.}

{\it Proof of the claim.} If $p$ is even, then set
$$h_0=(0,1,\ldots,p-1) \mbox{ and } h_1 = (p,p+1,\ldots,2p-1).$$ The claim easily follows from the definition of $h_0$ and $h_1$.

If $p$ is odd, then define $h_0$ and $h_1$ as
$$h_0=(0,1,\ldots,p-1) = (0,1)\cdots (p-2,p-1)$$
and $$h_1 = (2p-3,2p-4,\ldots,p-2) = (2p-3,2p-4) \cdots (p-1,p-2).$$
Therefore,
\begin{eqnarray*}
h_0h_1^{-1} & = & (0,1)(1,2)\cdots (p-3,p-2)(p-2,p-1) \\
& \times & (p-2,p-1)(p-1,p) \cdots (2p-2,2p-3) \\
& = & (0,1,\ldots,p-2)\times (p-1,p,\ldots,2p-3).
\end{eqnarray*}
Thus, $h_0h_1^{-1}$
  is a product of two
cycles of length $p-1$.

Now the rest of the properties  immediately follows from this construction.
{\it This proves the claim.}

\bigskip Recall that $V_n$ stands for the set of vertices of the $n$-th level.
For each pair of vertices $v\in V_n$ and $,w\in
V_{n+1}$ denote by $E_{v,w}^{(n+1)}$ the set of all edges between $v$ and $w$.
Denote by $G_{n,n+1}$ the subgroup of all elements $g\in G$ such that $g$ changes only the $n+1$-st edge of $x$, $x\in X$, i.e. the edge
 between $n$-th and $n+1$-st levels. Clearly, $G_{n,n+1}\subset G_{n+1}$.

 Telescope the diagram $B$ so that $s\in G_1$ and for any $n\geq 1$,
any pair of vertices $v\in V_n,w\in V_{n+1}$ we have that
\begin{eqnarray}\label{Equation1LemmaTech2}
\left|E_{(v,w)}^{(n+1)}\right|>\frac{2p}{\varepsilon}.\end{eqnarray}

\bigskip\noindent{\bf Claim 2.} {\it Each group $G_{n,n+1}$, $n\geq 1$, contains two elements $g_0,g_1$ that satisfy the following conditions
\begin{enumerate}
\item[(i)]  the length of every nontrivial cycle of $g_0$ and $g_1$ is $p$;
\item[(ii)] all nontrivial cycles of $g_0g_1^{-1}$ have even lengths;
\item[(iii)] $\mu(supp(g_0g_1^{-1}))>1-\varepsilon$ for each  measure $\mu\in\mathcal M(G)$.
\end{enumerate}
}

{\it Proof of the claim.}
(a) For every pair of vertices $v\in V_n$ and $w\in V_{n+1}$, consider the set  $E_{v,w}^{(n+1)}$. Set $m = 2p$ if $p$ is even and $m = 2p-2$ if $p$ is odd. Find non-negative integers $t$ and $q$ such that $card(E_{v,w}^{(n+1)}) = m t + q,$ where
 $q\in\{0,1,\ldots,m-1\}$. Note that the numbers $t$ and $q$ depend
on the vertices $v$ and $w$. The arguments below shall be
independently applied  to each pair of vertices.
Leave $q$ edges of the set $E_{v,w}^{(n+1)}$ out and partition
the rest of the edges into $t$ disjoint families $F_0,\ldots, F_{t-1}$
 with $card(F_i) = m$. For each $i$, let $h_0^{(i)}$ and $h_1^{(i)}$ be
permutations as in Claim 1 above. Note that the definition of $h_0^{(i)}$
and $h_1^{(i)}$ depends on the parity of the number $p$ (as well as on the  vertices $v$ and $w$). Set $h^{(j)}_{v,w}  = h_j^{(0)}\cdots h_j^{(t-1)}$, $j=0,1$.

 Thus, we have constructed permutations  $h^{(0)}_{v,w}$ and $h^{(1)}_{v,w}$ of the set $E_{v,w}^{(n+1)}$ such that each permutation $h^{(j)}_{v,w}$, $j=0,1$, consists of disjoint cycles of length $p$ and
 \begin{equation}\label{Equation2LemmaTech2} card\left(h^{(0)}_{v,w} (h^{(1)}_{v,w})^{-1}\right) = t\cdot m.\end{equation}

 Define a homeomorphism $g_i:X\rightarrow X$  by
 \begin{equation}\label{Equation3LemmaTech2}g_i(x_1,\ldots x_{n-1},x_n,\ldots)
= (x_1,\ldots x_{n-1},h_{v,w}^{(i)}(x_n),\ldots)\mbox{ if }x_n\in E_{(v,w)}^{(n+1)}.\end{equation}

 Clearly, $g_0$ and $g_1$ are cycles of length $p$ belonging to $G_{n,n+1}$.
It follows from Equations (\ref{Equation1LemmaTech2})-(\ref{Equation3LemmaTech2})
that for each pair of vertices $v\in V_n,w\in V_{n+1}$
the homeomorphism $g_0g_1^{-1}$ acts on all, but less then $\varepsilon
\left|E_{(v,w)}^{(n+1)}\right|$
 edges from $E_{(v,w)}^{(n+1)}$.
Therefore, $$\mu(supp(g_0g_1^{-1}))>1-\varepsilon\mbox{ for every invariant measure }\mu.$$
This establishes the condition (iii) of the claim. The rest of the conditions follows easily from the definition of $g_0$ and $g_1$. {\it This proves the claim.}

\bigskip For each $n\geq 1$, take homeomorphisms $g_{n+1}^{(0)}, g_{n+1}^{(1)} \in G_{n,n+1}$ satisfying all conditions of Claim 2 above. For each tuple $a=(a_1,a_2,\ldots,a_r)$, $a_i\in
\{0,1\}$, define a homeomorphism $s_a^{(i)}\in G_{1+r}$ by

$$s_a(x) =
sg^{(a_1)}_2 \cdot g^{(a_2)}_3\cdots g^{(a_r)}_{r+1}(x),$$
where $x \in X$.  Recall that $s\in G_1$.  Since all homeomorphisms $\{g^{(a_i)}_{i+1}\}$ act on different levels (starting with $n\geq 2$) and consist of cycles of length $p$, we conclude that  $s_a$ is conjugate to $s$ for every $a\in \{0,1\}^r$. Moreover, for distinct $a, b\in
\{0,1\}^r$ we have that
$$
s_as_b^{-1}(x) =  g^{(a_1)}_2 (g^{(b_1)}_2)^{-1}\cdot \cdots g^{(a_r)}_{r+1} (g^{(b_r)}_{r+1})^{-1}(x),\; x\in X.$$ Choose $i=1,\ldots, r$ such that $a_i\neq b_i$. Then $supp(s_as_b^{-1})\supset supp(g^{(a_i)}_{i+1} (g^{(b_i)}_{i+1})^{-1}) $ and $\mu(supp(s_as_b^{-1}))>1-\varepsilon$ for every $\mu\in\mathcal M(G)$.

  To complete the proof of the lemma, we observe only that the homeomorpism $s_as_b^{-1}$ consists of cycles of length $p$ if  $p$ is even and of length $p-1$ if $p$ is odd.
\end{proof}

%
%

\section{Orthogonal Projections}\label{OrthogonalProjections}

In the section we define a family of orthogonal projections,
which, in some sense, behave as operators of multiplication by characteristic functions.
Here we also establish the first main result of the paper
(Theorem \ref{TheoremTracePreliminaries}).
 Fix an indecomposable character $\chi$ of the group $G$ and the
corresponding factor representation $\pi: G\rightarrow \mathcal M_\pi$,
see Section \ref{SubsectionCharacters}. The symbol $H$ stands
for the Hilbert space on which the algebra $\mathcal M_\pi$ acts and
 $\xi$ stands for the cyclic and separating vector for the algebra $\mathcal M_\pi$.
 Recall that $tr(Q) = (Q\xi,\xi)$ for all $Q\in \mathcal M_\pi$ and $\chi(g) = tr(\pi(g))$ for all $g\in G$.

\begin{definition}\label{DefinitionProjectors} Fix a clopen set $A\subset X$. (1) 
%
 Denote by $P^A$ the orthogonal projector onto the space $$H^A = \{h\in H : \pi(g)h = h\mbox{ for all }g\in G(A)\}.$$
\end{definition}

\begin{remark} \label{RemarkPropertiesProjectors}

(1) Since the character $\chi$ is indecomposable, the existence of a non-zero invariant vector for the group $\pi(G)$ implies that the representation $\pi$ is trivial, i.e. $\pi(g) = I$ is the identity operator for every $g\in G$. This implies that  $\chi(g)=1$ for all
$g$. If $\chi
\neq 1$, then there are no invariant vectors, which implies that $P^X = 0$.

(2) We will use the convention that $P^\emptyset = I$ is the identity operator.

(3) Notice that it is not clear a priori that $P^A\neq 0$ for at least one clopen set
$A\neq \emptyset$.

(4) If $g\in G(A)$, then $\pi(g)P^A = P^A$.

(5)  It will follow from Proposition \ref{PropositionWeakLimitsPA} that the projection $P^A$ belongs to $\mathcal M_\pi$ the von Neumann algebra generated by $\pi(G)$.
\end{remark}

 We recall that the projector $P_1$ {\it dominates} $P_2$, in symbols $P_1\geq P_2$,
if $P_2(H)\subset P_1(H)$. This, in particular, implies that $tr(P_1)\geq tr(P_2)$.
 Recall that when we talk about limits in $\mathcal M_\pi$, we always mean {\it weak limits}.

\begin{proposition}\label{PropositionWeakLimitsPA} Let $B$ be a Bratteli diagram and $G$ the associated full group. Assume that either
the group $G$ is simple or the representation $\pi: G\rightarrow \mathcal M_\pi$
is continuous. Let $A$ be a clopen subset of the path-space $X$ and $\{h_n\}_{n\geq 1}$
be a sequence of involutions satisfying the properties of Lemma
\ref{LemmaSequenceInvolutions} for the set $A$.  Then
$$P^A =  \lim\limits_{m\to\infty }\pi(h_n).$$
\end{proposition}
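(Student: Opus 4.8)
The plan is to show that the sequence $\{\pi(h_n)\}$ converges weakly to a projection, and then identify that projection with $P^A$. Since each $h_n$ is an involution, $\pi(h_n)$ is a self-adjoint unitary, so any weak limit point $Q$ of the sequence satisfies $\|Q\|\le 1$ and $Q=Q^*$. The first task is to prove convergence: it suffices to show that $(\pi(h_n)\eta,\eta)$ converges for $\eta$ ranging over the dense set $\{\pi(g)\xi:g\in G\}$, i.e. that $\chi(g_2^{-1}h_n g_1)$ converges for all $g_1,g_2\in G$. Using centrality of $\chi$ this reduces to showing $\chi(h_n g)$ converges for each fixed $g\in G$. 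Fix $n_0$ with $g\in G_{n_0}(A)$; for $n\ge n_0$ property (i) of Lemma \ref{LemmaSequenceInvolutions} gives $h_n g = g h_n$, and properties (iv)--(v) say that $h_n$ and $h_{n'}$ (and $h_{n_1}h_{n_2}h_{n_3}$ versus $h_{n_1}h_{n_2}$, etc.) are $1/n$-conjugate; combined with the conjugation-invariance of $\chi$ and an estimate bounding $|\chi(a)-\chi(b)|$ by a function of $D(a,b)$ when $ab^{-1}$ is controlled (the standard ``$\chi$ is uniformly continuous on balls in the $D$-metric'' type bound, coming from $|\chi(a)-\chi(b)|^2\le 2(1-\Re\chi(ab^{-1}))$ and positive-definiteness), we get that $\{\chi(h_n g)\}$ is Cauchy. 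This is essentially the hardest step and it is exactly where the hypothesis ``$G$ simple or $\pi$ continuous'' enters: in the simple case the $1/n$-conjugations are genuine conjugations so $\chi(h_n g)$ is eventually constant; in the continuous case one uses uniform continuity in the $D$-topology. So the limit $Q=\lim_n\pi(h_n)$ exists and lies in $\mathcal M_\pi$ since each $\pi(h_n)$ does and $\mathcal M_\pi$ is weakly closed.

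Next I would show $Q$ is a projection, i.e. $Q^2=Q$. Because $\pi(h_n)^2=I$, one computes $Q^2 = \lim_m \lim_n \pi(h_m)\pi(h_n)$ in an appropriate sense; the clean way is to note that by property (v), $h_m$ and $h_n$ are $1/\min(m,n)$-conjugate, and more importantly that $\pi(h_m)$ commutes with $\pi(h_n)$ in the limit — indeed for $n$ large both lie ``eventually'' in commuting pieces. The standard trick (used for $S(\infty)$, as the authors note) is: $Q\pi(h_m)$ is the weak limit over $n$ of $\pi(h_n)\pi(h_m)$; using property (iv), $h_n h_m$ is $1/m$-conjugate to $h_n h_m h_\ell$-type reductions, leading to $\chi(h_n h_m g)\to \chi(h_{n}g)$-consistent values, which yields $Q\pi(h_m)=Q$ for all large $m$, hence $Q^2 = \lim_m Q\pi(h_m) = Q$. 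Combined with $Q=Q^*$, $Q$ is an orthogonal projection.

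Finally I would identify $\mathrm{ran}(Q)$ with $H^A$. For the inclusion $H^A\subset\mathrm{ran}(Q)$: if $\pi(g)\eta=\eta$ for all $g\in G(A)$, then since $h_n\in G(A)$ we get $\pi(h_n)\eta=\eta$ for all $n$, so $Q\eta=\eta$. For the reverse inclusion $\mathrm{ran}(Q)\subset H^A$, fix an involution $s\in G_n(A)$; property (ii) says $sh_n$ and $h_n$ are $1/n$-conjugate, so in the weak limit $\pi(s)Q = \lim_n \pi(s)\pi(h_n) = \lim_n \pi(sh_n)$, and $1/n$-conjugacy of $sh_n$ with $h_n$ forces $\lim_n \pi(sh_n) = \lim_n \pi(h_n)=Q$ (again using the $D$-continuity/simplicity of $\chi$ to pass conjugacy through $\pi$ in the weak topology, via the cyclic vector $\xi$). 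Hence $\pi(s)Q=Q$, i.e. $\pi(s)$ fixes $\mathrm{ran}(Q)$ pointwise, for every involution $s\in G_n(A)$; since involutions generate $G_n(A)$ and $\bigcup_n G_n(A)=G(A)$, every $\pi(g)$ with $g\in G(A)$ fixes $\mathrm{ran}(Q)$ pointwise, so $\mathrm{ran}(Q)\subset H^A$. Therefore $Q=P^A$. The main obstacle throughout is making the phrase ``$1/n$-conjugate elements have nearly equal images under $\pi$ in the weak topology'' precise and quantitative; I expect to package this once as a lemma of the form: if $D(qaq^{-1},b)<\varepsilon$ then $|(\pi(a)\eta,\zeta)-(\pi(b)\eta,\zeta)|$ is small for $\eta,\zeta$ in the orbit of $\xi$ — using $|1-\chi(c)|$ small when $\mu(\mathrm{supp}(c))$ small, which in turn follows from positive-definiteness and the fact that $c$ with small support is close to $1$ in a sense detectable by $\chi$ — and then the whole argument runs uniformly.
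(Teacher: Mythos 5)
Your overall architecture matches the paper's (extract a weak limit $Q$, show it is an orthogonal projection, show $\mathrm{ran}(Q)=H^A$), but there is a recurring genuine gap at the crucial junctures: you repeatedly pass from ``$a$ and $b$ are (approximately) conjugate in $G$'' to ``$\pi(a)$ and $\pi(b)$ have the same weak limit''. Conjugacy only gives $\chi(a)=\chi(b)$, i.e.\ equality of the single matrix coefficient $(\pi(\cdot)\xi,\xi)$; it says nothing about the coefficients $(\pi(\cdot)\pi(g_1)\xi,\pi(g_2)\xi)$, because the conjugating elements vary with $n$ and need not commute with $g_1,g_2$. Concretely: in your projection step, property (iv) only yields the scalar identity $tr(Q^3)=tr(Q^2)$, not the operator identity $Q\pi(h_m)=Q$; and in your reverse inclusion, property (ii) only yields $tr(\pi(s)Q)=tr(Q)$, not $\pi(s)Q=Q$, so the phrase ``$1/n$-conjugacy forces $\lim_n\pi(sh_n)=\lim_n\pi(h_n)$'' is unjustified. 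The missing bridge --- which is the actual content of the paper's proof --- is: from $tr(Q^3)=tr(Q^2)=\|Q\xi\|^2$ one gets equality in the Cauchy--Schwarz inequality for $(Q^2\xi,Q\xi)$, hence $Q^2\xi$ and $Q\xi$ are proportional, and since $\xi$ is \emph{separating} for $\mathcal M_\pi$ this upgrades to $Q^2=cQ$ with $c=1$; the same Cauchy--Schwarz-plus-separating-vector argument converts $tr(\pi(s)Q)=tr(Q)$ into $\pi(s)Q\xi=Q\xi$ and then into $\pi(s)Q=Q$. You never invoke the separating property of $\xi$, and without it none of your operator identities follow from the trace identities you actually have.

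Your opening step --- proving directly that $\chi(h_ng)$ is Cauchy for every $g\in G$ --- is also both flawed and unnecessary. It is flawed because a general $g=g_1g_2^{-1}$ lies in some $G_{n_0}$ but not in $G_{n_0}(A)$, so property (i) gives no commutation with $h_n$, and because, as above, $1/n$-conjugacy of $h_{n_1}$ and $h_{n_2}$ does not control the difference $\chi(h_{n_1}g)-\chi(h_{n_2}g)$ (the conjugator does not commute with $g$). It is unnecessary because one can simply pass to a weakly convergent subsequence (weak compactness of the unit ball) and prove that every subsequential limit equals $P^A$; convergence of the full sequence then follows from uniqueness of the limit point, which is exactly the route the paper takes.
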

\begin{proof}  We will simultaneously consider the cases of  simple groups
(even diagrams, see Proposition \ref{PropositionSimpleDiagrams}) and continuous representations. Note that if the
representation $\pi$ is continuous, then the corresponding character
$\chi$ is also continuous.

Consider a sequence  $\{h_n\}_{n\geq 1}$ satisfying all the statements
  of Lemma \ref{LemmaSequenceInvolutions} and  the corresponding sequence of unitary operators $\{\pi(h_n)\}_{n\geq 1}$.  Passing to a subsequence (we will drop the extra subindex), we can assume that $\{\pi(h_n)\}$ weakly converges to some operator $Q\in\mathcal \mathcal M_\pi$.

 {\it Claim 1.} {\it $Q$ is an orthogonal projector.}
 Since all elements $\{h_n\}$ are involutions, we obtain that $\pi(h_n)^* = \pi(h_n^{-1}) = \pi(h_n)$. Hence $Q = Q^*$ is a self-adjoint operator.

  Fix a triple  $n_1<n_2<n_3$. Lemma  \ref{LemmaSequenceInvolutions}(iv) implies that $h_{n_1}h_{n_2}h_{n_3}$ and $h_{n_1}h_{n_2}$ are either conjugate
  (for simple groups) or $1/n_1$-conjugate (for continuous reprsentations),
$n_1<n_2<n_3$. In either case, we have
  $$|tr(\pi(h_{n_1})Q^2) - tr(\pi(h_{n_1})Q)| =
\lim\limits_{n_2\to\infty}\lim\limits_{n_3\to\infty}|
\chi(h_{n_1}h_{n_2}h_{n_3}) - \chi(h_{n_1}h_{n_2})|.$$ Therefore
$$|tr(\pi(h_{n_1})Q^2) - tr(\pi(h_{n_1})Q)|\to 0\mbox{ as }n_1\to\infty.$$
    This implies that  $tr(Q^3) = tr(Q^2)$, i.e.
 \begin{equation}\label{EqnTripple}(Q^3\xi,\xi) = (Q^2\xi,\xi) = (Q\xi,Q\xi) =
||Q\xi||^2.\end{equation}
  Using the Cauchy-Schwarz inequality, we get that $$||Q\xi||^2 =
 (Q^3\xi,\xi) = (Q^2\xi,Q\xi) \leq ||Q\xi||\cdot ||Q^2\xi||
\leq ||Q\xi||\cdot ||Q\xi|| = ||Q\xi||^2,$$ i.e., $(Q^2\xi,Q\xi) =
||Q\xi||\cdot ||Q^2\xi||$. Therefore, the vectors
$Q\xi$  and $Q^2\xi$ are linearly dependent.
Since the vector $\xi$ is separating for the algebra $\mathcal M_\pi$,
we get that $Q^2 = cQ$ for some $c\in \mathbb C$. Using (\ref{EqnTripple}), we see that $||Q\xi||^2 = (Q^2\xi,Q\xi) = c(Q\xi,Q\xi) = c||Q\xi||^2$. This implies that $c = 1$ and $Q$ is a projector.

  {\it Claim 2.} {\it $\pi(s)Q = Q$ for every $s\in G(A)$}. Since every element of $G(A)$ can be represented as a product of involutions, it sufficies to establish the claim for involutions only.  Fix an involution $s\in G(A)$. For all $n$ large enough, the element $sh_n$ and $h_n$ are either conjugate (for simple groups) or $1/n$-conjugate (for continuous representations), see Lemma  \ref{LemmaSequenceInvolutions}(ii). It follows that $tr(\pi(s)Q)   = tr(Q)$, i.e. $$(\pi(s)Q\xi,\xi) = tr(\pi(s)Q)   = tr(Q) = (Q\xi,\xi) = (Q^2\xi,\xi) =(Q\xi,Q\xi) = ||Q\xi||^2.$$

Using the Cauchy-Schwarz inequality, we get that
 $$||Q\xi||^2 = (\pi(s)Q\xi,\xi)=(\pi(s)Q\xi,Q\xi)\leq
\|\pi(s)Q\xi\|\cdot \|Q\xi\|
\leq\|Q\xi\|^2.$$ Since we have the equality
here, the vectors $\pi(s)Q\xi$ and $Q\xi$ are linearly dependent. It is easy to check that, in fact,  $\pi(s)Q\xi = Q\xi$. Since the vector $\xi$ is separating, we conlcude that $\pi(s)Q=Q$.

 {\it Claim 3. $Q = P^A$.}  Since  $h_n\in G(A)$, we have that $\pi(h_n)P^A = P^A$.
Therefore, $QP^A = P^A$, i.e. $P^A\leq Q$. On the other hand,
 Claim 2 above implies that  every vector from the subspace $QH$ is a fixed vector for
$\pi(s)$,
 for any $s\in G(A)$. This implies that $QH\subset P^AH$
(see the definition of $P^A$), i.e.  $Q\leq P^A$.   It follows that
 $Q =P^A$. Thus, the sequence $\pi(h_n)$ has a unique weak limit point, which
coincides with $P^A$. Therefore, $$P^A =  \lim\limits_{m\to\infty }\pi(h_n).$$
 \end{proof}


The next proposition shows that the projectors $\{P^A\}$ form an Abelian semigroup.

\begin{proposition}\label{PropositionPropertiesProj} Let $B$ be a Bratteli diagram and $G$ the associated full group. Assume that either $G$ is simple or the representation of $G$ is continuous.  Let  $A_1$ and $A_2$ be clopen subsets of $X$.

(1) If $A_1 \subset A_2$, then $P^{A_1}\geq P^{A_2}$ and $tr(P^{A_1})\geq tr(P^{A_2})$.

(2) For any $g\in G$, we have that $\pi(g)P^{A_1} \pi(g^{-1}) = P^{g(A_1)}$ and $tr(P^{A_1}) = tr(P^{g(A_1)})$.

(3) $P^{A_1}P^{A_2} = P^{A_1\cup A_2}$.
\end{proposition}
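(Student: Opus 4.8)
The plan is to prove the three items in order, each reducing to the weak-limit description of $P^A$ from Proposition \ref{PropositionWeakLimitsPA}. For item (1), if $A_1\subset A_2$ then $G(A_2)\subset G(A_1)$, so a vector fixed by all of $\pi(G(A_1))$ is a fortiori fixed by all of $\pi(G(A_2))$; hence $H^{A_1}\subset H^{A_2}$, which is exactly $P^{A_1}\ge P^{A_2}$, and $tr$ is monotone on projections. For item (2), I would observe that conjugation by $g$ carries the subgroup $G(A_1)$ onto $G(g(A_1))$: indeed $h\mapsto ghg^{-1}$ is an automorphism of $G$ with $supp(ghg^{-1})=g(supp(h))$, so it maps $G(A_1)$ bijectively onto $G(g(A_1))$. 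Consequently $\pi(g)H^{A_1}=H^{g(A_1)}$, which gives $\pi(g)P^{A_1}\pi(g^{-1})=P^{g(A_1)}$; taking traces and using the trace property $tr(\pi(g)T\pi(g^{-1}))=tr(T)$ yields $tr(P^{A_1})=tr(P^{g(A_1)})$.

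The main work is item (3). The inclusion $P^{A_1\cup A_2}\le P^{A_1}$ and $P^{A_1\cup A_2}\le P^{A_2}$ follows from item (1) since $A_1,A_2\subset A_1\cup A_2$; as $P^{A_1\cup A_2}$ is a projection dominated by both, it is dominated by $P^{A_1}\wedge P^{A_2}$, in particular $P^{A_1}P^{A_1\cup A_2}=P^{A_1\cup A_2}=P^{A_2}P^{A_1\cup A_2}$. For the reverse, I would first show $P^{A_1}$ and $P^{A_2}$ commute, so that $P^{A_1}P^{A_2}$ is itself a projection, and then identify its range. To get commutativity and the product formula simultaneously, the cleanest route is via the weak-limit representation: pick sequences of involutions $\{h_n\}\subset G(A_1)$ and $\{k_n\}\subset G(A_2)$ as in Lemma \ref{LemmaSequenceInvolutions} with $P^{A_1}=\lim\pi(h_n)$, $P^{A_2}=\lim\pi(k_n)$. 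The key point is that one can arrange $h_n$ to commute with $k_m$ for the relevant indices: $h_n$ only permutes a fixed initial segment of edges inside the part of $X$ lying over $A_1$, while by passing to a telescoping one can take $k_m$ (for $m$ large depending on $n$) to act by permuting strictly later edges over $A_2$, so that the two act on disjoint coordinate blocks and commute as homeomorphisms. Then $\pi(h_n)$ and $\pi(k_m)$ commute, and passing to weak limits (using that multiplication is separately weakly continuous on bounded sets, and that $\xi$ is separating so the limits are well-defined independent of subsequence) gives $P^{A_1}P^{A_2}=P^{A_2}P^{A_1}$, hence $P^{A_1}P^{A_2}$ is a projection whose range is $H^{A_1}\cap H^{A_2}$.

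It remains to check $H^{A_1}\cap H^{A_2}=H^{A_1\cup A_2}$. The inclusion $\supset$ is immediate. For $\subset$: a vector $\eta$ with $\pi(g)\eta=\eta$ for all $g\in G(A_1)$ and all $g\in G(A_2)$ is fixed by the subgroup they generate; and since any $g\in G(A_1\cup A_2)$ can be written (after choosing a level $n$ with which $A_1$, $A_2$ and $g$ are all compatible, and using the gluing property of full groups) as a product of an element supported in $A_1$ and an element supported in $A_2$ — concretely, decompose $g$ as a permutation of finite paths and factor it into a permutation fixing the paths over $A_2\setminus A_1$ times one fixing the paths over $A_1$, extending both to $G$ by the identity — we get $\pi(g)\eta=\eta$, so $\eta\in H^{A_1\cup A_2}$. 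This finishes (3). The anticipated obstacle is the commutation arrangement in the paragraph above: one must be careful that the telescoping used to realize $\{h_n\}$ via Lemma \ref{LemmaSequenceInvolutions} for $A_1$ is compatible with the one for $A_2$, and that this can be done simultaneously; this is where the freedom of passing to subsequences and the structure of Lemma \ref{LemmaSequenceInvolutions}(i) (that $h_n$ commutes with all of $G_n(A)$) are essential.
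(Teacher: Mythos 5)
Your items (1) and (2) follow the paper's argument and are fine in substance. (A minor slip in (1): $A_1\subset A_2$ gives $G(A_1)\subset G(A_2)$ and hence $H^{A_2}\subset H^{A_1}$; you wrote both inclusions the other way round, and the two inversions happen to cancel, landing on the correct conclusion $P^{A_1}\ge P^{A_2}$.)

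Item (3) has a genuine gap at its final, and most important, step. You reduce everything to $H^{A_1}\cap H^{A_2}=H^{A_1\cup A_2}$ and justify the nontrivial inclusion by asserting that every $g\in G(A_1\cup A_2)$ is a product of an element supported in $A_1$ and an element supported in $A_2$. That factorization is false: any such product preserves the partition $\{A_1,\,A_2\setminus A_1\}$ setwise, whereas a generic $g\in G(A_1\cup A_2)$ moves cylinders of $A_1\setminus A_2$ into $A_2\setminus A_1$. Worse, when $A_1\cap A_2=\emptyset$ --- which is exactly the essential case --- the subgroup generated by $G(A_1)$ and $G(A_2)$ is $G(A_1)\times G(A_2)$, a \emph{proper} subgroup of $G(A_1\sqcup A_2)$: it contains no transposition exchanging a cylinder of $A_1$ with one of $A_2$. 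So no generation argument can close this step; the assertion that a vector fixed by $\pi(G(A_1))$ and $\pi(G(A_2))$ is automatically fixed by all of $\pi(G(A_1\sqcup A_2))$ is a rigidity property of the representation, not of the group. The paper supplies exactly this missing input: it first treats disjoint $A_1,A_2$, observes that the product $h_n=h_n^{(1)}h_n^{(2)}$ of the two involution sequences from Lemma \ref{LemmaSequenceInvolutions} is itself a sequence satisfying that lemma for $A_1\cup A_2$, so that $\pi(h_n)\to P^{A_1\cup A_2}$ by Proposition \ref{PropositionWeakLimitsPA}, and then computes $P^{A_1}P^{A_2}=\lim P^{A_1}\pi(h_n^{(2)})=\lim P^{A_1}\pi(h_n^{(1)}h_n^{(2)})=P^{A_1}P^{A_1\cup A_2}=P^{A_1\cup A_2}$, using the absorption $P^{A_1}\pi(h)=P^{A_1}=\pi(h)P^{A_1}$ for $h\in G(A_1)$; the general case then follows by splitting off $A_1\cap A_2$. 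Your commutativity argument for $P^{A_1}$ and $P^{A_2}$ (via involutions acting on disjoint levels) is plausible but does not substitute for this step; you would need to replace the generation claim by the weak-limit computation above or an equivalent application of Proposition \ref{PropositionWeakLimitsPA} to the union.
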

\begin{proof}  (1) Note that if $A_1\subset A_2$, then $G(A_1)\subset G(A_2)$. It follows that $H^{A_2}\subset H^{A_1}$ and $P^{A_2}\leq P^{A_1}$.

(2) Fix an element $g\in G$. Clearly, $\pi(g)P^{A_1} \pi(g^{-1})$ is a projector onto $\pi(g)H^{A_1}$.  Since $\pi(g)H^{A_1}$ consists of vectors invariant under $\pi(g)\pi(q)\pi(g^{-1})$, $q\in G(A_1)$, and $g G(A_1)g^{-1} = G(g(A_1))$, we conclude that $\pi(g)P^{A_1} \pi(g^{-1}) = P^{g(A_1)}$.

(3-i) First assume that clopen sets $A_1$ and $A_2$ are disjoint.    Find the sequences $\{h_n^{(i)}\}\subset G(A_i)$ of involutions satisfying all the properties of  Lemma \ref{LemmaSequenceInvolutions} for the set $A_i$, $i=1,2$. Observe that the sequence (or some subsequence) $h_n = h_n^{(1)}h_n^{(2)}$, $n\geq 1$, also meet all the properties  of Lemma \ref{LemmaSequenceInvolutions} for the set $A_1\cup A_2$. Thus, passing to subsequences and using Proposition \ref{PropositionWeakLimitsPA}, we conlude that
\begin{eqnarray*}P^{A_1}P^{A_2}  & = &\lim_{n\to\infty} P^{A_1}\pi(h_n^{(2)}) = \lim_{n\to\infty} P^{A_1}\pi(h_n^{(1)}h_n^{(2)}) = P^{A_1}P^{A_1\cup A_2} \\
& = & \lim_{n\to\infty} \pi(h_n^{(1)})P^{A_1\cup A_2} = P^{A_1\cup A_2}.\end{eqnarray*}

(3-ii) Now consider abritrary clopen sets $A_1$ and $A_2$. Set $C = A_1\cap A_2$. It follows from (3-i) that $$P^{A_1\setminus C} P^C P^{A_2\setminus C} = P^{A_1} P^{A_2\setminus C} = P^{A_1\cup A_2}.$$ This completes the proof.
\end{proof}


Let $A$ and $B$ be clopen sets. Set $d(A,B) = \sup\{\mu(A\triangle B) : \mu\in\mathcal M(G)\}$.
Observe that $d(A,B)\leq d(A,C) + d(C,B)$ for any clopen set $C$.  The following lemma shows that the trace of the projector $P^A$ depends ``continuously'' on a clopen set $A$.  We would like to emphasize that all sets in this lemma are assumed to be non-empty.

\begin{lemma}\label{LemmaContinuityTraceOnPA}  For  every clopen
set $A$ and $\varepsilon>0$, there exists $\delta>0$ such that if   a clopen set $B$ satisfies $d(A,B) < \delta$,
then $|tr(P^A) - tr(P^B)|<\varepsilon$.
 \end{lemma}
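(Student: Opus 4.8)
The plan is to reduce the claim to the following more tractable statement: if two clopen sets $A$ and $B$ are close in the sense that $d(A,B)$ is small, then the projections $P^A$ and $P^B$ can be compared by moving one slightly inside the other. More precisely, I would first show that if $d(A,B) < \delta$ then $A \subset A' $ and $B \subset A'$ for some clopen set $A'$ with $d(A, A')$ and $d(B, A')$ both small (take $A' = A \cup B$; then $d(A, A\cup B) = \sup_\mu \mu((A\cup B)\setminus A) = \sup_\mu \mu(B\setminus A) \le d(A,B) < \delta$, and symmetrically for $B$). By Proposition \ref{PropositionPropertiesProj}(1) we get $tr(P^{A'}) \le tr(P^A)$ and $tr(P^{A'}) \le tr(P^B)$, so it suffices to bound $tr(P^A) - tr(P^{A'})$ and $tr(P^B) - tr(P^{A'})$; by symmetry, it is enough to prove: for every clopen $A$ and $\eps > 0$ there is $\delta > 0$ such that whenever $C$ is a clopen set disjoint from $A$ with $\sup_{\mu} \mu(C) < \delta$, one has $tr(P^A) - tr(P^{A \cup C}) < \eps$.

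The key step is to estimate $tr(P^A) - tr(P^{A\cup C})$ when $C$ is a small clopen set disjoint from $A$. Using Proposition \ref{PropositionPropertiesProj}(3), $P^{A\cup C} = P^A P^C$, so $tr(P^A) - tr(P^{A\cup C}) = tr(P^A(I - P^C)) = tr(P^A) - tr(P^A P^C)$. I would try to control $tr(P^C)$ directly: intuitively $P^C$ should be close to the identity when $C$ is small, because the group $G(C)$ becomes "small" in the uniform topology and has few constraints. Concretely, if $C$ is compatible with a partition $\Xi_n$ and is a union of very few cylinders through each vertex (which is what $\mu(C)$ small forces, by minimality and the lower bound $\delta(C) = \inf_\mu \mu(C) > 0$ from the lemma preceding Definition \ref{DefinitionUniformTopology}), then every generator $g \in G(C)$ has $D(g, e) \le \sup_\mu \mu(C) < \delta$, so $\pi(g)$ is within $\delta$ (in the appropriate sense, using Lemma \ref{LemmaSequenceInvolutions}-type comparison or continuity) of the identity; hence $P^C$, being a weak limit of such $\pi(h_n)$ with $\mathrm{supp}(h_n) = C$ via Proposition \ref{PropositionWeakLimitsPA}, satisfies $\|P^C \xi - \xi\|^2 = 2 - 2\,\mathrm{Re}\,tr(P^C) $ small. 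Then $tr(P^A) - tr(P^A P^C) = (P^A\xi, \xi) - (P^A P^C \xi, \xi) = (P^A \xi, \xi - P^C\xi)$, and by Cauchy--Schwarz this is bounded by $\|P^A\xi\|\cdot \|\xi - P^C\xi\| \le \|\xi - P^C\xi\|$, which is small. This would give the bound $|tr(P^A) - tr(P^{A\cup C})| < \eps/2$, and combining the two halves completes the proof.

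The main obstacle I anticipate is making rigorous the claim that "$P^C$ is close to the identity when $C$ is small", i.e. that $\|\xi - P^C\xi\|$ can be made arbitrarily small by shrinking $d(A,B)$ (equivalently $\sup_\mu\mu(C)$). The subtlety is that $P^C$ is defined via \emph{all} of $G(C)$, not a single group element, and although each element $g \in G(C)$ satisfies $\chi(g) = (\pi(g)\xi,\xi)$ with $1 - \mathrm{Re}\,\chi(g)$ small when $\mathrm{supp}(g)$ is small, the projection $P^C$ onto the common fixed space could a priori be much smaller than each individual $\pi(g)$ suggests. To handle this I would use Proposition \ref{PropositionWeakLimitsPA}: choose a sequence of involutions $h_n \in G(C)$ realizing $P^C$ as a weak limit of $\pi(h_n)$; since each $h_n$ is supported in $C$, Theorem \ref{TheoremTracePreliminaries}-style reasoning (or, more carefully, just that $\mathrm{supp}(h_n) \subset C$ forces $D(h_n, e) \le \sup_\mu \mu(C)$, together with the invariance of $D$ and the comparison lemmas) gives $1 - \mathrm{Re}\,\chi(h_n) \le$ something that $\to 0$ uniformly as $\sup_\mu\mu(C) \to 0$; passing to the weak limit, $tr(P^C) = \lim \chi(h_n)$ is close to $1$, hence $\|\xi - P^C\xi\|^2 = 1 - tr(P^C)$ is small (using $P^C$ a projection and $\xi$ separating so $tr(P^C)$ is real). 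Once this uniform estimate is in place, the rest is the routine Cauchy--Schwarz argument above.
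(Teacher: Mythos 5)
Your reduction to comparing $P^{A}$ with $P^{A\cup B}$ is fine, and the Cauchy--Schwarz computation $tr(P^A)-tr(P^AP^C)=(P^A\xi,\xi-P^C\xi)\le\|P^A\xi\|\,\|\xi-P^C\xi\|$ is correct. The gap is exactly where you anticipate it: the claim that $\|\xi-P^C\xi\|^2=1-tr(P^C)$ can be made uniformly small when $\sup_\mu\mu(C)$ is small. First, this is simply false in general: for the regular character (which is a legitimate indecomposable character of a simple full group, and is not excluded at this point of the paper) one has $\chi(h_n)=0$ for every involution $h_n\neq e$, hence $tr(P^C)=\lim_n\chi(h_n)=0$ for every non-empty clopen $C$, no matter how small. (Your overall bound happens to survive there only because $\|P^A\xi\|=0$ too, but that is an accident of that example, not an argument.) Second, even for non-regular characters the statement ``$D(g,e)$ small $\Rightarrow$ $\chi(g)$ close to $1$'' is precisely uniform continuity of $\chi$ at the identity; in the hypothesis ``$\pi$ continuous'' it is available, but in the hypothesis ``$G$ simple'' — the main case — it is not: continuity there is a \emph{consequence} of the final formula (see the remark after Theorem \ref{TheoremFormulaCharacters}), and Lemma \ref{LemmaContinuityTraceOnPA} feeds into Theorem \ref{TheoremCharactersProjectors}, so invoking it would be circular. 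There is no comparison lemma in Sections 3--4 that converts smallness of $supp(h_n)$ into $\chi(h_n)\approx 1$.

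The paper avoids this entirely with a packing argument that makes no reference to the size of $tr(P^C)$. Setting $C=A\cap B$ and taking $\delta$ proportional to $\inf_\mu\mu(A)/m$, Lemma \ref{LemmaMain} produces $m$ involutions $s_1,\dots,s_m\in G(A)$ with $s_j(A\setminus C)\subset C$ and the images $s_j(A\setminus C)$ pairwise disjoint, so that $s_j(C)\cup s_l(C)=A$ for $j\neq l$. By Proposition \ref{PropositionPropertiesProj} the operators $P^{s_j(C)}-P^A$ are then mutually orthogonal projections, each of trace $tr(P^C)-tr(P^A)$ (using conjugation invariance), and since their sum is a projection of trace at most $1$ one gets $0\le tr(P^C)-tr(P^A)\le 1/m$. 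This works uniformly for all characters, regular or not. If you want to salvage your route, you would have to replace the step ``$tr(P^C)$ is close to $1$'' by some such relative estimate; as written, the proposal proves the lemma only under the continuity hypothesis, not in the simple-group case.
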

\begin{proof}  Choose an integer $m$ such that $2/m<\varepsilon$.
Setting $C = A\cap B$, it suffices to show that there exists $\delta>0$
such that  if $d(Z,C)<\delta$, for $Z = A$ and $Z=B$, then $|tr(P^Z) - tr(P^C)|<1/m$. We will only show how to choose $\delta$ for $Z = A$. The proof for the set $B$ is analogous.

 Set $$\delta  = \frac{\inf\{\mu(A) : \mu\in \mathcal M(G)\} }{2m}.$$
It follows that if $C\subset A$ with $d(A,C)<\delta$, then $\mu(C)\geq \mu(A)/2$ and
 $$\mu(A\setminus C) < \delta \leq \frac{\mu(A)}{2 m}\leq \frac{\mu(C)}{m}$$
for every measure $\mu\in\mathcal M(G)$.
Therefore, using Lemma \ref{LemmaMain}, we can find involutions
 $s_j\in G(A)$ with $s_i(A\setminus C)\subset C$, $j=1,\ldots,m$, and
$$s_j(A\setminus C)\cap s_l(A\setminus C) = \emptyset \mbox{ whenever }j\neq l.$$
This implies that  $s_j(C)\cup s_l(C) = A$ for  $j\neq l$. Thus, by Proposition \ref{PropositionPropertiesProj}, we obtain that ($j\neq l$)
$$\begin{array}{ll}\displaystyle \left(P^{s_j(C)} - P^A\right)\left(P^{s_l(C)} - P^A\right)  \\
= P^{s_j(C)}P^{s_l(C)} - P^{s_j(C)}P^A   - P^AP^{s_l(C)} +
 P^AP^A   \\
= P^A - P^A - P^A + P^A =0.
\end{array}$$
Therefore, $\left\{P^{s_j(C)} - P^A\right\}_{j=1}^m$ is a family of mutually orthogonal projectors. It follows that
$\sum\limits_{j=1}^m\left(P^{s_j(C)} - P^A\right)$ is a projector. Hence,
$$1\geq \sum\limits_{j=1}^m tr\left(P^{s_j(C)} - P^A\right) = \sum\limits_{j=1}^m tr\left(\pi(s_j)(P^{C}-P^{A})\pi(s_j^{-1})\right) = m\cdot tr\left(P^{C}-P^{A}\right).$$ It follows that
$0\leq tr\left(P^C-P^{A}\right)\le 1/m$.
\end{proof}

Now we are ready to establish the first main result of the paper.
%
\begin{theorem}\label{TheoremCharactersProjectors} Let $B$ be a Bratteli
diagram and $G$ the associated full group. Assume that either
 the group $G$ is simple or  the representation
$\pi: G\rightarrow \mathcal \mathcal M_\pi$ is continuous with respect
 to the uniform topology on $G$.  Let $\chi$ be the character corresponding to the representation $\pi$.    Then  $$\chi(g) = tr(P^{supp(g)})\mbox{ for every }g\in G.$$
\end{theorem}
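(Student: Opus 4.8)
The plan is to show that for a fixed $g \in G$, the value $\chi(g) = (\pi(g)\xi, \xi)$ equals $\mathrm{tr}(P^{\mathrm{supp}(g)})$ by writing $g$ as (a product of commuting elements, each) conjugate to an element consisting of even cycles, and then pushing $\pi(g)$ toward a projection via the conjugation technique of Lemma \ref{lemmaExistenceSI} and the weak-limit description of $P^A$ from Proposition \ref{PropositionWeakLimitsPA}. First I would reduce to the case where $g$ itself consists of even cycles and $A := \mathrm{supp}(g)$. Indeed, the set $A$ decomposes into clopen pieces according to the period of $g$; on an odd-period piece one replaces $g$ by a suitable power or uses the standard trick of doubling the diagram (recall we work with even diagrams, Proposition \ref{PropositionSimplicityEvenDiagrams}) so that each cycle becomes even. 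Since $\chi$ is central and since $\mathrm{tr}(P^{\mathrm{supp}(g)})$ depends only on the $G$-equivalence class of the support (Proposition \ref{PropositionPropertiesProj}(2)), this reduction does not change either side — though I should be careful to verify that the even-cycle replacement genuinely preserves $\chi(g)$, e.g.\ by exhibiting a conjugacy within $G$.

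Next, with $g$ consisting of even cycles and $A = \mathrm{supp}(g)$, I would invoke Lemma \ref{lemmaExistenceSI}: for each $r$ and $\varepsilon>0$ there are elements $s_1,\dots,s_{2^r}$, each conjugate to $g$, with $\mathrm{supp}(s_i) = A$, such that $s_i s_j^{-1}$ consists of even cycles and $\mu(A \setminus \mathrm{supp}(s_i s_j^{-1})) < \varepsilon$ for all invariant $\mu$ and $i \neq j$. Centrality of $\chi$ gives $\chi(s_i) = \chi(g)$ for all $i$, and $\mathrm{tr}(\pi(s_i s_j^{-1})) = \chi(s_i s_j^{-1})$. The key point is that $s_i s_j^{-1}$ has support which is $d$-close to $A$, hence by the continuity Lemma \ref{LemmaContinuityTraceOnPA} its value $\chi(s_i s_j^{-1}) = \mathrm{tr}(P^{\mathrm{supp}(s_i s_j^{-1})})$ is close to $\mathrm{tr}(P^A)$ — wait, more precisely I want to relate $\chi(s_i s_j^{-1})$ directly to $\mathrm{tr}(P^A)$. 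The mechanism is: because $s_i s_j^{-1}$ consists of even cycles with support essentially all of $A$, Lemma \ref{LemmaSequenceInvolutions}(iii) lets me conjugate $s_i s_j^{-1}$ to $(s_i s_j^{-1}) h_n$ for the canonical involutions $h_n \in G(A)$; letting $n \to \infty$ and using $\pi(h_n) \to P^A$ (Proposition \ref{PropositionWeakLimitsPA}) together with the fact that $s_i s_j^{-1}$ commutes (approximately) with $h_n$ and $\pi(s_i s_j^{-1}) P^A = P^A$ (Remark \ref{RemarkPropertiesProjectors}(4), as $s_i s_j^{-1} \in G(A)$), I get $\chi(s_i s_j^{-1}) = \lim_n \chi((s_i s_j^{-1})h_n) = \mathrm{tr}(\pi(s_i s_j^{-1}) P^A) = \mathrm{tr}(P^A)$ up to an error controlled by $\varepsilon$.

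The remaining ingredient is an inequality in the opposite spirit: I must show $\chi(g)$ cannot exceed $\mathrm{tr}(P^A)$ and cannot fall below it. For the upper bound I would use that $\{\pi(s_i)\}_{i=1}^{2^r}$ are $2^r$ unitaries with $\pi(s_i)\xi$ all of norm $1$ and pairwise inner products $(\pi(s_i)\xi, \pi(s_j)\xi) = \chi(s_j^{-1} s_i) = \chi(s_i s_j^{-1})$ approximately equal to $\mathrm{tr}(P^A)$; applying the Cauchy--Schwarz / Gram-matrix positivity to $\sum_i c_i \pi(s_i)\xi$ (or rather to the averaged vector $\frac{1}{2^r}\sum_i \pi(s_i)\xi$ and letting $r\to\infty$) forces $\chi(g) = (\pi(g)\xi,\xi)$ — hmm, I need $\chi(g)$ itself, not $\chi(s_i s_j^{-1})$, to appear. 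Actually the cleanest route: the averaged operator $\frac{1}{2^r}\sum_i \pi(s_i)$ converges weakly (along $r$) to an operator $E$ with $E\xi$ the orthogonal projection of $\xi$ onto a subspace, and $\mathrm{tr}(E^* E) = \lim \frac{1}{2^{2r}}\sum_{i,j}\chi(s_i s_j^{-1}) = \mathrm{tr}(P^A)$ while also $\mathrm{tr}(E) = \chi(g)$ since each $\chi(s_i)=\chi(g)$; then positivity/idempotency arguments as in the proof of Proposition \ref{PropositionWeakLimitsPA} (Cauchy--Schwarz with the separating vector $\xi$) pin $E$ down and yield $\chi(g) = \mathrm{tr}(P^A)$. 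I expect the main obstacle to be exactly this last step: making the limiting operator $E = \lim_r 2^{-r}\sum_i \pi(s_i)$ rigorous, identifying it with $P^A$ (or showing $\mathrm{tr}(E) = \mathrm{tr}(P^A)$), and correctly tracking the $\varepsilon$-errors coming from the fact that the $s_i s_j^{-1}$ only approximately have full support $A$ and the conjugations in Lemma \ref{LemmaSequenceInvolutions} are only $1/n$-conjugations in the non-simple (continuous) case. The even-cycle reduction at the start is also a delicate bookkeeping point that must be handled carefully so as not to silently change $\chi(g)$.
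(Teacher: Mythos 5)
Your second stage — invoking Lemma \ref{lemmaExistenceSI} to produce $s_1,\dots,s_{2^r}$ conjugate to $g$, computing $\chi(s_is_j^{-1})=tr(P^{supp(s_is_j^{-1})})\approx tr(P^A)$ via the even-cycle/$h_n$-limit mechanism together with Lemma \ref{LemmaContinuityTraceOnPA}, and then averaging the vectors $\pi(s_i)\xi$ and applying Cauchy--Schwarz with the separating vector $\xi$ — is essentially the paper's proof (the paper phrases it with the vectors $\eta_i=(\pi(s_i)-P^A)\xi$ and bounds $|(\eta,\xi)|\le r^{-1}\|\sum_i\eta_i\|$, which is your averaged-vector computation).

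The genuine problem is your opening reduction ``to the case where $g$ itself consists of even cycles.'' No such reduction exists. Conjugation in $G$ carries the clopen partition of $X$ by $g$-period to the corresponding partition for the conjugate, so the cycle lengths of $g$ are a conjugacy invariant: an element with a nontrivial odd-period piece is never conjugate to an element consisting of even cycles. Replacing $g$ by a power changes $\chi(g)$, and ``doubling the diagram'' changes the group, so neither preserves the left-hand side of the identity you are proving. The centrality of $\chi$ and Proposition \ref{PropositionPropertiesProj}(2) only tell you that both sides are conjugation-invariant, not that you may swap $g$ for an even-cycled element with the same support. Fortunately the reduction is also unnecessary, and deleting it repairs the proof: Lemma \ref{lemmaExistenceSI} applies to an \emph{arbitrary} $g$ and guarantees evenness only for the differences $s_is_j^{-1}$, which is all your averaging argument ever uses. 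The correct architecture (the paper's) is: first prove the identity for even-cycled elements directly via Lemma \ref{LemmaSequenceInvolutions}(iii) and $\pi(h_n)\to P^A$; then apply that base case to the products $s_is_j^{-1}$ inside the averaging argument for general $g$. One smaller caveat: rather than passing to a weak operator limit $E=\lim_r 2^{-r}\sum_i\pi(s_i)$ (where $tr(E^*E)=\lim tr(E_r^*E_r)$ is not automatic, since weak convergence does not commute with products), you should simply bound $\|2^{-r}\sum_i(\pi(s_i)-P^A)\xi\|^2=2^{-2r}\sum_{i,j}(\eta_i,\eta_j)$ for each finite $r$ and let $r\to\infty$, $\varepsilon\to 0$ at the end; this is exactly what makes the $\varepsilon$-bookkeeping you were worried about go through.
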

\begin{proof} (1) First of all, we will  prove the result when
$g$ consists of even cycles (see Definition \ref{DefinitionEvenCycles}).
 Take a sequence $\{h_n\}_{n\geq 1}$ satisfying the properties of
Lemma \ref{LemmaSequenceInvolutions} for the set $A = supp(g)$.
It follows from Lemma \ref{LemmaSequenceInvolutions}(iii) that the elements $gh_n$ and
$g$ are conjugate, assuming that $n$ is large enough.
Then $\chi(gh_n)=\chi(g)$.
 Applying Proposition \ref{PropositionWeakLimitsPA} and Remark
\ref{RemarkPropertiesProjectors}(4), we get that
$$\chi(g) = \lim_{n\to\infty} \chi(gh_n) = \lim_{n\to\infty}
(\pi(g)\pi(h_n)\xi,\xi) = (\pi(g)P^A\xi,\xi) = (P^A\xi,\xi) = tr(P^A).$$

(2) Now, consider the case when $g$ is an arbitrary non-identity element of $G$. Set $A = supp(g)$. Fix an arbitrary integer  $r\geq 1$ and arbitrary $\varepsilon>0$.  Using Lemma \ref{LemmaContinuityTraceOnPA}, choose $\delta>0$ such that $|tr(P^A) - tr(P^B)|<\varepsilon / r^2$ for any clopen set $B\subset A$ with $d(A,B)< \delta$. By Lemma \ref{lemmaExistenceSI}
there exist elements $s_1,\ldots,s_r\in G$ such that
\begin{itemize}
\item[(i)] each element $s_i$ is conjugate to $g$;
\item[(ii)] $A = supp(s_i)$ and $d(supp\left(s_is_j^{-1}\right),A)<\delta$ for any $i\neq j$;
\item[(iii)]  the element $s_is_j^{-1}$
 consists of even cycles, when  $i\neq j$.
\end{itemize}
  Consider the
family of vectors $\eta_i=\left(\pi(s_i)-P^A\right)\xi$. Put also
$\eta=\left(\pi(g)-P^A\right)\xi$.

 Since $s_i\in G(A)$, we get that  $\pi(s_i)P^A=P^A$.
Thus,  for any
$i\neq j$ we have that
\begin{eqnarray*}
|(\eta_i,\eta_j) | & = & \left | \left((\pi(s_i)-P^A)\xi,(\pi(s_j)-P^A)\xi\right) \right| \\
& = & \left|   (\pi(s_i)\xi,\pi(s_j)\xi) - (\pi(s_i)\xi,P^A\xi) - (P^A\xi,\pi(s_j)\xi) + (P^A\xi,P^A\xi)    \right| \\
& = &
\left |\chi\left(s_is_j^{-1}\right)-tr\left(P^A\right) - tr\left(P^A\right)  + tr\left(P^A\right) \right| \\
& = &
\left|tr(P^{supp(s_is_j^{-1})})-tr(P^A)\right| \\
& \leq & \varepsilon/r^2.
\end{eqnarray*}
Using similar arguments, we obtain that   $\|\eta_i\|^2 = 1-tr\left(P^A\right) = \|\eta\|^2$ for every $i = 1,\ldots, r$.  Since $s_i$ is conjugate to $g$, we see that
$(\eta_i,\xi)=(\eta,\xi)$ for every $i$. It follows that
$$
|(\eta,\xi)|=\frac{1}{r}\left|\left(\sum\limits_{i=1}^r\eta_i,\xi\right)\right|\leqslant
\frac{1}{r}\left\|\sum\limits_{i=1}^r\eta_i\right\|=\frac{1}{r}
\sqrt{\sum\limits_{i,j=1}^r(\eta_i,\eta_j)}\le
\frac{\sqrt{r\|\eta\| +\varepsilon}}{ r}.
$$
Since $r$ and $\varepsilon$ are chosen arbitrarily, we conclude that  $(\eta,\xi)=0$. It
follows that $\chi(g)=tr\left(P^A\right)$.
\end{proof}

%
%

\section{Formula for the Character}\label{SectionFormulaCharacter}
In this section we prove that each indecomposable character $\chi$ of the group $G$ is defined as $\chi(g) = \mu_1(Fix(g))^{\alpha_1}\cdots \mu_k(Fix(g))^{\alpha_k}$, where $\mu_1,\ldots, \mu_k$ are ergodic measures and $\alpha_1,\ldots,\alpha_k\in \{0,1,\ldots,\infty\}$.  {\it From now on, we assume that the set of ergodic measures $\mathcal E(G)$ is finite. Enumerate all ergodic measures of $(X,G)$ as  $\{\mu_1,\ldots,\mu_k\}$.}

%
%
%

\begin{lemma}\label{LemmaDenseSubset} For any sequences $(x_1,\ldots,x_k)\in [0,1)^k$ and $(y_1,\ldots,y_k)\in (0,1]^k$, any $\varepsilon >0$, and any $n\geq 1$, we can find  clopen $G_n$-invariant sets $A$ and $B$ such that $x_i \leq \mu_i(A) < x_i + \varepsilon$ and $y_i - \varepsilon < \mu_i(B) \leq y_i$ for all $i=1,\ldots, k$.
\end{lemma}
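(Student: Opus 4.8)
The plan is to reduce everything to one claim: as $A$ ranges over clopen $G_n$-invariant sets, the vectors $(\mu_1(A),\ldots,\mu_k(A))$ approximate every point of $[0,1)^k$ from above. The assertion about $B$ then follows by complementation, since $X\setminus A$ is again clopen and $G_n$-invariant, $\mu_i(X\setminus A)=1-\mu_i(A)$, and $(1-y_1,\ldots,1-y_k)\in[0,1)^k$ whenever $(y_1,\ldots,y_k)\in(0,1]^k$. I would also simplify the level: telescoping the first $n$ levels of $B$ into a single level turns $G_n$ into the $G_1$ of the new diagram without changing $X$, $G$, or the measures, so one may assume $n=1$. The structural facts I would use are that $G_1$ is a \emph{finite} group, $G_1=\prod_{v\in V_1}S(M_1^{(v)})$, and that a clopen $G_1$-invariant set is precisely a finite union of ``blocks'' $Y_{v,\tau}=\{x: r(x_1)=v,\ x_2\cdots x_m=\tau\}$, where $\tau$ ranges over finite paths from $v$ to some level $m$; such a block is clopen and $G_1$-invariant, and its $\mu_i$-measure equals $h_v^{(1)}$ times that of a single depth-$m$ cylinder inside it.

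The first real step is to separate the ergodic measures by clopen $G_1$-invariant sets. Distinct ergodic measures are mutually singular, so for each $i$ and each $\delta'>0$ there is a clopen set $W^{(i)}$ with $\mu_i(W^{(i)})>1-\delta'$ and $\mu_j(W^{(i)})<\delta'$ for all $j\ne i$: use inner regularity of $\mu_i$ and outer regularity of the finitely many remaining $\mu_j$ on the compact metric space $X$ to get such an open set, then approximate it from inside by a clopen set. Saturating $W^{(i)}$ under the finite group $G_1$ and then passing to the sets $W_i := \bigcup_{g\in G_1} g W^{(i)} \setminus \bigcup_{j\ne i}\bigcup_{g\in G_1} g W^{(j)}$, I would obtain pairwise disjoint clopen $G_1$-invariant sets $W_1,\ldots,W_k$ with $\mu_i(W_i)>1-k\delta$ and $\mu_j(W_i)<\delta$ for $j\ne i$, where $\delta=|G_1|\,\delta'$ can be made as small as desired; in particular I would arrange $\delta<\min\{\varepsilon/(2k),\ (1-\max_l x_l)/(2k)\}$, which forces $\mu_i(W_i)>\max_l x_l$.

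The second step is to realize a prescribed $\mu_i$-mass inside $W_i$. Each $\mu_i$ is non-atomic, because a minimal action on a perfect Cantor set cannot carry an atomic invariant measure (the orbit of an atom is dense, hence infinite, and consists of atoms of equal mass, contradicting $\mu_i(X)=1$). Consequently $\sup_{v,\tau}\mu_i(Y_{v,\tau})\to 0$ as the level $m$ of the blocks tends to infinity: otherwise a finitely branching tree of blocks of $\mu_i$-measure bounded below would have an infinite branch, i.e.\ a nested decreasing sequence of blocks of measure bounded below, whereas such a sequence intersects in a finite set and hence in a set of $\mu_i$-measure $0$. So, choosing $m$ large, partitioning $W_i$ into blocks of $\mu_i$-measure $<\varepsilon/(2k)$, and adding them greedily in any order until the accumulated $\mu_i$-mass first reaches $x_i$, I obtain a clopen $G_1$-invariant $A_i\subset W_i$ with $\mu_i(A_i)\in[x_i,\,x_i+\varepsilon/(2k))$; the process does not run out of blocks since $\mu_i(W_i)>x_i$. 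Setting $A=A_1\cup\cdots\cup A_k$, which is clopen and $G_1$-invariant, disjointness of the $W_l$ gives $\mu_i(A)=\mu_i(A_i)+\sum_{l\ne i}\mu_i(A_l)$ with each cross term $\mu_i(A_l)\le\mu_i(W_l)<\delta<\varepsilon/(2k)$, so $\mu_i(A)\in[x_i,\,x_i+\varepsilon)$ for every $i$, as required; $B$ is then obtained by applying this to $(1-y_1,\ldots,1-y_k)$ and taking the complement.

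I expect the main obstacle to be the first step: producing the separating sets \emph{within the class of $G_n$-invariant clopen sets} rather than among arbitrary clopen sets. The resolution is that $G_n$ is finite, so one can saturate a clopen ``almost-support'' without destroying the measure estimates. A secondary point that should not be glossed over is the non-atomicity of the ergodic measures, which is exactly what makes the greedy block-addition of the second step terminate with small overshoot.
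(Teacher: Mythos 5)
Your proof is correct. The overall architecture matches the paper's --- both arguments use mutual singularity of the ergodic measures to separate them, exploit the finiteness of $G_n$ to saturate without losing control of the measures, and obtain $B$ from $A$ by complementation --- but the mechanism by which you pin $\mu_i(A)$ into the window $[x_i,x_i+\varepsilon)$ is genuinely different. The paper chooses, inside disjoint $G$-invariant Borel carriers $B_i$ with $\mu_i(B_i)=1$, a \emph{closed} set $C_i$ whose $G_n$-saturation $[C_i]_n$ already has $\mu_i$-measure in $[x_i,x_i+\varepsilon/3)$, and then fattens the disjoint closed $G_n$-invariant sets $[C_i]_n$ to disjoint clopen sets $A_i$ by outer regularity, finally taking $A=\bigcup_i\bigcap_{g\in G_n}g(A_i)$ to restore $G_n$-invariance; the existence of such a $C_i$ is asserted rather than proved and implicitly rests on non-atomicity. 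You instead first manufacture disjoint \emph{clopen} $G_n$-invariant almost-carriers $W_i$ and then tune the mass combinatorially, by greedily accumulating $G_n$-invariant cylinder blocks of uniformly small $\mu_i$-measure, justifying the smallness of deep blocks by an explicit non-atomicity-plus-K\"onig's-lemma argument. Your version is longer but more self-contained: it makes explicit both the non-atomicity of the ergodic measures and the precise structure of clopen $G_n$-invariant sets as unions of blocks $Y_{v,\tau}$, two points the paper's proof leaves to the reader. The telescoping reduction to $n=1$ is harmless but also unnecessary, since finiteness of $G_n$ is all that is ever used.
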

\begin{proof}  (1)  Fix the subgroup $G_n$. Since all ergodic measures are mutually singular, we can find a disjoint family of $G$-invariant Borel sets $B_1,\ldots, B_k$ with $\mu_i(B_i)= 1$, for $i=1,\ldots, k$. For every $i=1,\ldots, k$, choose a closed set $C_i\subset B_i$ with $$x_i\leq \mu_i([C_i]_n)<x_i+ \varepsilon/3,\mbox{ where }[C_i]_n := \bigcup_{g\in G_n}g(C_i).$$
Notice that each set $[C_i]_n$ is closed and $G_n$-invariant.

Since $[C_i]_n\cap [C_j]_n = \emptyset$ for $i\neq j$, we can find a disjoint family of clopen sets $\{A_i\}_{i=1}^k$ such that $[C_i]_n\subset A_i$, $\mu_i(A_i) < \mu_i([C_i]_n) + \varepsilon/3$ and $\mu_i(\bigcup_{j\neq i}A_j)<\varepsilon/3$, $i=1,\ldots,k$. Setting $$A = \bigcup_{i=1}^k \bigcap_{g\in G_n}g(A_i),$$ we get that $A$ is a clopen $G_n$-invariant set that gives the desired approximation.

(2) Consider $(y_1,\ldots,y_k)\in (0,1]^k$. Define a sequence $x = (1-y_1,\ldots, 1-y_k)\in [0,1)^k$. Find a clopen $G_n$-invariant set $A$ as in the part (1). Set $B = X\setminus A$.
\end{proof}

Define a map $F$ from the family of clopen sets into $[0,1]^k$ by  $$F(A) = (\mu_1(A),\ldots,  \mu_k(A)),$$
 where $A$  is a clopen set. Set also  $$M = \{F(A) :
A \mbox{ is a proper clopen set}\}\subset (0,1)^k.$$ Equip $\mathbb{R}^k$ with the $\sup$-norm $\|c\|=\sup\limits_{i=1,\ldots,k}\|c_i\|.$

\begin{lemma}\label{LemmaContinuityofPhi}  For  every $c\in [0,1)^k$
 and $\varepsilon>0$, there exists $\delta>0$ such that if non-empty clopen sets $A$ and $B$
satisfy the conditions
 $\|F(A)-c\|<\delta/2$ and $\|F(B)-c\|<\delta/2$,
then $|tr(P^A) - tr(P^B)|<\varepsilon$.
 \end{lemma}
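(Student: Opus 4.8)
The plan is to reduce Lemma \ref{LemmaContinuityofPhi} to Lemma \ref{LemmaContinuityTraceOnPA}, which already asserts the ``continuity'' of $\mathrm{tr}(P^A)$ with respect to the metric $d(A,B) = \sup_\mu \mu(A\triangle B)$. First I would observe that the two ``distances'' on clopen sets, namely $d(A,B)$ and $\|F(A)-F(B)\|$, are closely related, but crucially $\|F(A)-F(B)\|$ only controls $\mu_i(A)-\mu_i(B)$ and not $\mu_i(A\triangle B)$, since cancellation can occur. So the first step is to pass from $A$ and $B$ to the set $C = A\cap B$ and estimate $d(A,C)$ and $d(B,C)$ in terms of $\delta$. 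We have $\mu_i(A\setminus C) = \mu_i(A) - \mu_i(A\cap B) \ge \mu_i(A) - \mu_i(B)$, which goes the wrong way; instead, the point is that since every $\mu_i$ is ergodic and the $\mu_i$ are mutually singular, $A\cap B$ and $A$ can differ in $\mu_i$-measure by at most $|\mu_i(A)-c_i| + |\mu_i(B)-c_i| < \delta$ once we also know $A\cap B$ has roughly the same measure. More carefully: $\mu_i(A\setminus B) \le \mu_i(A) + |\mu_i(B) - \mu_i(A\cap B)|$ is not yet useful, so the cleaner route is to bound $\mu_i(A\triangle B) = \mu_i(A) + \mu_i(B) - 2\mu_i(A\cap B)$ and note $\mu_i(A\cap B)\ge \mu_i(A)+\mu_i(B)-1$ is too weak; the right inequality is simply $\mu_i(A\triangle B)\le \mu_i(A\setminus B)+\mu_i(B\setminus A)$ where each term is at most $\max(\mu_i(A),\mu_i(B)) \le c_i+\delta/2$, which does not shrink with $\delta$.

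Given that subtlety, the actual approach I would take is different: I would argue directly that clopen sets with nearly equal $F$-values are, after an element of $G$ is applied, nearly nested, and then invoke Lemma \ref{LemmaMain} together with the argument already used in the proof of Lemma \ref{LemmaContinuityTraceOnPA}. Concretely: given $\varepsilon>0$, pick an integer $m$ with $2/m<\varepsilon$, and set $\delta$ to be a small fraction (like $1/(4m)$ times) of $\min_i \delta_i$, where $\delta_i>0$ is a uniform lower bound on $\mu_i$ of nonempty clopen sets coming from minimality (the lemma right after Lemma \ref{LemmaMain}). If $\|F(A)-c\|<\delta/2$ and $\|F(B)-c\|<\delta/2$, then $\|F(A)-F(B)\|<\delta$, so $\mu_i(A)-\mu_i(B)<\delta$ and $\mu_i(B)-\mu_i(A)<\delta$ for all $i$. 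Now apply Lemma \ref{LemmaMain} to the pair $(A, B')$ where $B'$ is obtained by enlarging $B$ slightly within its complement so that $\mu_i(A)<\mu_i(B')$ strictly for every $i$ — or, more simply, reproduce the scheme of Lemma \ref{LemmaContinuityTraceOnPA}: set $C=A\cap B$, and since $\mu_i(A\setminus C)\le\mu_i(A)-\mu_i(B)+\mu_i(B\setminus C)$ the estimate I need is on $\mu_i(B\setminus C)=\mu_i(B\setminus A)\le \mu_i(B)-\mu_i(A)+\mu_i(A\setminus C)$, so $\mu_i(A\setminus C)=\mu_i(B\setminus C)$ and both equal $\tfrac12\mu_i(A\triangle B)$; I then bound this common quantity by noting $A\setminus C$ and $B\setminus C$ are disjoint with equal $\mu_i$-measure, hence by Lemma \ref{LemmaMain} there is $g\in G$ with $g(A\setminus C)=B\setminus C$... but these still need not be small.

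So the honest main obstacle, and the thing I would spend the most care on, is exactly this: small $\|F(A)-F(B)\|$ does \emph{not} imply small $d(A,B)$, so Lemma \ref{LemmaContinuityofPhi} is genuinely stronger than a trivial corollary of Lemma \ref{LemmaContinuityTraceOnPA}. The resolution I expect the authors use, and which I would pursue, is to show that $\mathrm{tr}(P^A)$ depends only on $F(A)$ up to something controlled — i.e. that if $F(A)=F(B)$ exactly then $\mathrm{tr}(P^A)=\mathrm{tr}(P^B)$, and more generally Lipschitz-type control. The key tool is Proposition \ref{PropositionPropertiesProj}(2): $\mathrm{tr}(P^A)=\mathrm{tr}(P^{g(A)})$ for any $g\in G$, so $\mathrm{tr}(P^A)$ is a $G$-equivalence invariant. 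Thus the plan is: (1) given $A,B$ with $\|F(A)-c\|,\|F(B)-c\|<\delta/2$, use Lemma \ref{LemmaMain} repeatedly to find, inside a common ``large'' clopen set $D$ with $F(D)$ close to $c$, clopen subsets $G$-equivalent to $A$ and to $B$ respectively, arranged so that their symmetric difference with $D$ is small in the $d$-metric (this is where smallness is recovered — by passing to $G$-translates we have freedom to choose representatives, and $\delta_i$-positivity lets us fit copies with controlled overlap); (2) apply Proposition \ref{PropositionPropertiesProj}(2) to replace $\mathrm{tr}(P^A)$ and $\mathrm{tr}(P^B)$ by the traces of these repositioned projections; (3) apply Lemma \ref{LemmaContinuityTraceOnPA} to those, since now the $d$-distances are genuinely $O(\delta)$. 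The orthogonality trick from Lemma \ref{LemmaContinuityTraceOnPA} (producing $m$ mutually orthogonal projections $P^{s_j(C)}-P^A$ and concluding $\mathrm{tr}(P^C)-\mathrm{tr}(P^A)\le 1/m$) is then applied verbatim, giving the bound $|\mathrm{tr}(P^A)-\mathrm{tr}(P^B)|\le 2/m<\varepsilon$. The delicate bookkeeping — choosing $\delta$ in terms of $\varepsilon$, $m$, and $\min_i\delta_i$, and verifying the fitting of $m$ disjoint-ish copies — is routine once the structure is in place, so I would state it concisely rather than grind through it.
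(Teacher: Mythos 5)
Your final plan is essentially the paper's proof: the paper uses Lemmas \ref{LemmaDenseSubset} and \ref{LemmaMain} to shrink $A$ to $A'\subset A$ with $\mu_i(A)-\delta<\mu_i(A')<\mu_i(B)$, moves $A'$ into $B$ by some $g\in G$, invokes Proposition \ref{PropositionPropertiesProj}(2) to replace $tr(P^{A})$ by $tr(P^{g(A)})$, and then runs the orthogonality argument of Lemma \ref{LemmaContinuityTraceOnPA} on the now $d$-close pair $(g(A),B)$ --- exactly your steps (1)--(3), except that the paper maps a shrunk copy of $A$ directly into $B$ rather than repositioning both sets inside a third set $D$. You correctly identified both the genuine obstacle (small $\|F(A)-F(B)\|$ does not control $d(A,B)$) and its resolution via the $G$-equivalence invariance of $tr(P^{\cdot})$.
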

\begin{proof} 
Choose an integer $m$ such that $2/m<\varepsilon$.
Set $$\delta  = \frac{\min\{c_i : i=1,\ldots,k\}}{3m}.$$

Let $A$ and $B$ be clopen sets satisfying
$\|F(A)-c\|<\delta/2$ and $\|F(B)-c\|<\delta/2$. Then   $\mu_i(A)-\delta<\mu_i(B)$ for each
$i=1,\ldots,k$. Using Lemmas \ref{LemmaDenseSubset} and \ref{LemmaMain}, find a clopen subset $A'\subset A$ such that for each $i=1,\ldots,k$
$$\mu_i(A)-\delta<\mu_i(A')<\mu_i(B).$$
By Lemma \ref{LemmaMain}
 there exists an element $g\in G$ such that $g(A')\subset B$. Clearly,
  $d(g(A),B)<\delta$. Observe that $$\delta<\frac{\min\{\mu_i(A),\mu_i(B)\}}{2m}$$
 for each $i=1,\ldots,k$. Now using Proposition \ref{PropositionPropertiesProj}(2) and  repeating the arguments of Lemma \ref{LemmaContinuityTraceOnPA}
we obtain that $|tr(P^{A})-tr(P^B)| = |tr(P^{g(A)})-tr(P^B)|<\varepsilon$.
\end{proof}

 Fix an arbitrary sequence $x = (x_1,\ldots,x_k)\in (0,1]^k$. Then
there exists a sequence of clopen sets $\{A_n\}_{n\geq 1}$ such that
$x = \lim\limits_{n\to\infty} F(A_{n})$.
Lemma \ref{LemmaContinuityofPhi} applied to the sequence $\{F(X\setminus A_{n})\}_{n\geq 1}$ implies that the sequence $\{tr(P^{X\setminus
A_n})\}_{n\geq 1}$ is convergent and the limit does not depend on the choice of
 $\{A_n\}_{n\geq 1}$.
  Set $$\varphi (x) =
\lim\limits_{n\to\infty}tr(P^{X\setminus A_n}).$$

\begin{remark}  Notice that a priori it is not clear  that $\varphi(1,\ldots,1) = 1$. This will later follow from  the multiplicativity property of the function $\varphi$ provided the character $\chi$ is not regular, see Proposition \ref{PropositionFuncProperties}.
\end{remark}

 \begin{lemma}\label{LemmaGInvar} Let $\mu$ be a $G$-invariant ergodic measure.
Let $B_n$ be a  $G_n$-invariant clopen set such that $\mu(B_n)\to\mu(B)>0$
as $n\to\infty$, where $B$ is some clopen set. Then for any clopen set $A$
 we have that $\mu(A\cap B_{n})\to \mu(A)\mu(B)$ as $n\to\infty$.
 \end{lemma}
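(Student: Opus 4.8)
The plan is to reduce the statement to an equidistribution-type fact about $G_n$-invariant clopen sets and then exploit ergodicity via the mean ergodic theorem on the finite group $G_n$. First I would fix the ergodic measure $\mu$ and the clopen set $A$, and note that since $A$ is clopen it is compatible with $\Xi_N$ for some fixed level $N$. For $n\ge N$ the set $B_n$ is $G_n$-invariant, hence a union of atoms of the partition $\Xi_n$ that is invariant under relabeling paths through each vertex $v\in V_n$; equivalently, $\mu(A\cap B_n)$ should be close to the average of $\mu(g(A)\cap B_n)$ over $g\in G_n$, because $B_n$ is $G_n$-invariant and $\mu$ is $G$-invariant. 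The key computation is that
$$\mu(A\cap B_n) = \frac{1}{|G_n|}\sum_{g\in G_n}\mu\bigl(g(A)\cap B_n\bigr) = \mu\Bigl(\Bigl(\frac{1}{|G_n|}\sum_{g\in G_n}\mathbf 1_{g(A)}\Bigr)\mathbf 1_{B_n}\Bigr),$$
and the averaged function $\psi_n := \frac{1}{|G_n|}\sum_{g\in G_n}\mathbf 1_{g(A)}$ is constant on each atom of $\Xi_n$: on a cylinder through $v\in V_n$ its value equals $k_v^{(n)}(A)/h_v^{(n)}$, where $k_v^{(n)}(A)$ is the number of finite paths from $v_0$ to $v$ lying in $A$.

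Next I would show that $\psi_n \to \mu(A)$ in $L^1(\mu)$ (in fact $\mu$-a.e.\ along the martingale $\mathbb E_\mu[\mathbf 1_A \mid \Xi_n]$, since $\psi_n$ is exactly that conditional expectation — being $\Xi_n$-measurable with the right integral over each atom). By the martingale convergence theorem $\mathbb E_\mu[\mathbf 1_A\mid\Xi_n]\to \mathbf 1_A$ $\mu$-a.e., but that is not yet what I want; rather I want $\psi_n\to\mu(A)$ constantly. This is where ergodicity enters: the tail $\sigma$-algebra $\bigcap_n \sigma(\Xi_n^{\mathrm{tail}})$ — more precisely the $\sigma$-algebra of $G$-invariant sets — is trivial mod $\mu$, and $\psi_n$, being $G_n$-invariant-averaged, converges to $\mathbb E_\mu[\mathbf 1_A \mid \mathcal I]$ where $\mathcal I$ is the $\sigma$-algebra of $G$-invariant sets; ergodicity gives $\mathcal I$ trivial, so this limit is the constant $\mu(A)$. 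Concretely: since $\psi_n$ is $G_n$-invariant and the $G_n$ are increasing with union $G$, the reverse martingale $(\psi_n)$ converges $\mu$-a.e.\ and in $L^1$ to $\mathbb E_\mu[\mathbf 1_A\mid \mathcal I] = \mu(A)$.

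Finally I would combine the two ingredients. Write
$$\bigl|\mu(A\cap B_n) - \mu(A)\mu(B)\bigr| \le \bigl|\mu\bigl((\psi_n - \mu(A))\mathbf 1_{B_n}\bigr)\bigr| + \mu(A)\,\bigl|\mu(B_n) - \mu(B)\bigr| \le \|\psi_n - \mu(A)\|_{L^1(\mu)} + \mu(A)\,\bigl|\mu(B_n)-\mu(B)\bigr|,$$
and both terms tend to $0$: the first by the $L^1$-convergence $\psi_n\to\mu(A)$ just established, the second by hypothesis. (The hypothesis $\mu(B)>0$ is not actually needed for this inequality, but it is harmless to keep; it is presumably there for the application.) I expect the main obstacle to be the clean identification of $\lim_n\psi_n$ with the constant $\mu(A)$: one must argue carefully that the increasing union $\bigcup_n G_n = G$ forces the limiting $\sigma$-algebra of the reverse martingale to be exactly the $G$-invariant sets, and then invoke ergodicity. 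An alternative, more hands-on route avoiding martingale language: use the uniform mixing estimate $\psi_n = \mathbb E_\mu[\mathbf 1_A\mid\Xi_n]$ together with the fact that for $n$ large the value $k_v^{(n)}(A)/h_v^{(n)}$ is within $\varepsilon$ of $\mu(A)$ for all $v$ carrying most of the mass — this is a restatement of $\psi_n\to\mu(A)$ in $L^1$ and can be deduced directly from $G$-invariance and ergodicity of $\mu$ via the mean ergodic theorem applied to the averaging operators over $G_n$.
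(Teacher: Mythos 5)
Your argument is correct, but it is genuinely different from the paper's. The paper works directly with the conditioned measures $\mu_n(C)=\mu(C\cap B_n)/\mu(B_n)$: it extracts a weak-$*$ limit $\nu$, observes that $\mu_n$ is $G_n$-invariant (so $\nu$ is $G$-invariant), that $\nu\le\mu/\mu(B)$, and then uses ergodicity in its extremality form ($\mu-\mu(B)\nu\ge 0$ and $G$-invariant forces $\nu=\mu$) to conclude $\mu_n(A)\to\mu(A)$. You instead prove an equidistribution statement for the averages $\psi_n=\frac{1}{|G_n|}\sum_{g\in G_n}\mathbf 1_{g(A)}$ and feed it into the identity $\mu(A\cap B_n)=\int\psi_n\mathbf 1_{B_n}\,d\mu$, which holds because each summand $\mu(g(A)\cap B_n)=\mu(g(A\cap B_n))=\mu(A\cap B_n)$ by $G_n$-invariance of $B_n$ and $G$-invariance of $\mu$. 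One correction of bookkeeping: $\psi_n$ is \emph{not} $\mathbb E_\mu[\mathbf 1_A\mid\sigma(\Xi_n)]$ (for $n$ large $A$ is $\Xi_n$-measurable, so that conditional expectation is just $\mathbf 1_A$); rather $\psi_n=\mathbb E_\mu[\mathbf 1_A\mid\mathcal F_n]$ where $\mathcal F_n$ is the $\sigma$-algebra of $G_n$-invariant Borel sets, which is \emph{decreasing} in $n$ — you do recognize this when you switch to the reverse-martingale picture, and the step you flag as the ``main obstacle'' is in fact immediate: a set lies in $\bigcap_n\mathcal F_n$ iff it is invariant under every $g\in\bigcup_nG_n=G$, so ergodicity makes $\bigcap_n\mathcal F_n$ trivial mod $\mu$ and $\psi_n\to\mu(A)$ in $L^1(\mu)$. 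Your final estimate then closes the argument. What each approach buys: the paper's proof is shorter and uses only weak-$*$ compactness plus extremality of ergodic measures, at the cost of needing $\mu(B)>0$ to divide by $\mu(B_n)$; your route is slightly longer (it needs the reverse martingale or mean ergodic theorem) but is more robust — it does not use $\mu(B)>0$ at all, and it isolates the reusable fact that $\frac{1}{|G_n|}\sum_{g\in G_n}\mathbf 1_{g(A)}\to\mu(A)$ in $L^1(\mu)$ for every ergodic $\mu$.
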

 \begin{proof} Consider a sequence of probability measures $\{\mu_n\}$ given by $$\mu_n(C):=\frac{\mu(C\cap B_n)}{\mu(B_n)}.$$ Since the set of all probability measures on $X$ is weakly compact, we can find a subsequence $\{n_l\}$ such that the sequence $\{\mu_{n_l}\}$  weakly converges to some probability measure $\nu$. It will follow from the proof below that $\nu$ is the only accumulation point of the sequence $\{\mu_n\}$. Thus, the measures $\mu_n$ converge to $\nu$. To simplify the notation, we will drop the extra index $l$.

  Observe that $\mu_n$ is $G_n$-invariant. Indeed, for $g\in G_n$ and a Borel set $C$, we have that $$\mu_n(g(C)) = \frac{\mu(g(C)\cap B_n)}{\mu(B_n)} = \frac{\mu(g(C\cap B_n))}{\mu(B_n)} = \frac{\mu(C\cap B_n)}{\mu(B_n)} = \mu_n(C).$$
 This implies that the limiting measure $\nu$ is $G$-invariant.   Note also that   $\nu(C)\leq  \mu(C)/\mu(B)$ for any closed set (and thus for any set)  $C$. Set $\delta = \mu(B)$. Hence $\mu = \delta \nu + (\mu - \delta \nu)$.  Since $\mu - \delta\nu$ is a (non-negative) $G$-invariant measure, the ergodicity of $\mu$ implies that $\mu = \nu$.

  Using the definition of $\nu$, we obtain that
 $$\mu_n(A) = \frac{\mu(B_n\cap A)}{\mu(B_n)}\to \mu(A).$$
 Thus, $\mu(B_n\cap A)\to \mu(A) \mu(B)$.
 \end{proof}

  For two sequences $x = (x_i)$ and $y = (y_i)$ from $[0,1]^k$ we will write  $x\leq y$ if $x_i\leq y_i$ for every $i$. The following proposition shows that  $\varphi$ is a continuous, monotone,  and multiplicative function. Notice that the multiplicativity of $\varphi$ can be seen as a version of the multiplicativity of  characters for $S(\infty)$.

\begin{proposition}\label{PropositionFuncProperties} (1) The function $\varphi : (0,1]^k\to [0,1]$ is continuous.

(2) The function $\varphi$ is monotone, in the sense that,  $a\leq b$, $a, b\in (0,1]^k$,  implies $\varphi(a)\leq \varphi(b)$.

(3) The function $\varphi$ is multilicative, i.e. for $a,b\in (0,1]^k$ one has that $$\varphi(a\cdot b) = \varphi(a) \varphi(b),\mbox{ where }a\cdot b = (a_1 b_1,\ldots, a_k b_k).$$
\end{proposition}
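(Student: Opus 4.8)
The plan is to prove the three properties essentially in the order stated, using the constructions already in place: the function $\varphi$ is defined via limits $\varphi(x) = \lim_n tr(P^{X\setminus A_n})$ for clopen $A_n$ with $F(A_n)\to x$, and by Lemma~\ref{LemmaContinuityofPhi} this limit is well-defined and, in fact, depends only on $x$ in a uniformly continuous way on each region $\|F(\cdot)-c\|$ small. So for \emph{continuity} (1), I would argue as follows: given $x\in(0,1]^k$ and $\varepsilon>0$, apply Lemma~\ref{LemmaContinuityofPhi} with $c = x$ (note $x$ need not lie in $[0,1)^k$, but the same argument works since we only need each coordinate bounded below; if a coordinate of $x$ equals $1$ there is nothing to control on that side). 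This gives $\delta>0$ so that $\|F(A)-x\|, \|F(B)-x\|<\delta/2$ force $|tr(P^A)-tr(P^B)|<\varepsilon$. Then for any $y$ with $\|y-x\|<\delta/4$, pick approximating clopen sets realizing values near $y$ and near $x$; all such sets are within $\delta/2$ of $x$ in $F$-value, so the defining limits for $\varphi(y)$ and $\varphi(x)$ differ by at most $\varepsilon$. Hence $\varphi$ is (uniformly) continuous on compact subsets of $(0,1]^k$.

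\textbf{Monotonicity (2).} For $a\le b$ in $(0,1]^k$, I would use Lemma~\ref{LemmaDenseSubset} to produce, for any $\eta>0$ and any fixed level $n$, clopen sets $A\subset B$ (by first choosing $B$ approximating $b$ from below, then choosing $A\subset B$ approximating $a$ from below; one can arrange the inclusion because $\mu_i(A)<\mu_i(B)$ for all $i$ allows us, via Lemma~\ref{LemmaMain} applied inside $B$, to replace $A$ by a $G$-equivalent subset of $B$ without changing its measures). Then $X\setminus B\subset X\setminus A$, so by Proposition~\ref{PropositionPropertiesProj}(1) we get $tr(P^{X\setminus A})\ge tr(P^{X\setminus B})$. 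Passing to the limit over better and better approximations yields $\varphi(a)\le\varphi(b)$. One must be slightly careful with the approximation argument to make the inclusion $A\subset B$ compatible with the limit definition of $\varphi$; but since $\varphi$ is already known continuous, it suffices to establish the inequality on a dense set of pairs $(F(A),F(B))$ with $A\subset B$ clopen, which is exactly what Lemma~\ref{LemmaDenseSubset} delivers.

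\textbf{Multiplicativity (3) --- the main obstacle.} This is where the real work lies. Given $a,b\in(0,1]^k$, I want clopen sets whose $F$-values are approximately $a$, approximately $b$, and approximately $a\cdot b$, arranged so that the last one is (up to $G$-equivalence) a ``product'' sitting inside the first two in a way that forces $P^{X\setminus(A\cap B)} = P^{X\setminus A}\,P^{X\setminus B}$ via Proposition~\ref{PropositionPropertiesProj}(3). The key tool is Lemma~\ref{LemmaGInvar}: choose a clopen set $B$ approximating $b$; telescope / pass to a level $n$ large enough that there is a $G_n$-invariant clopen set $B_n$ with $\mu_i(B_n)$ close to $\mu_i(B)\approx b_i$; then inside the level-$n$ structure choose a clopen set $A$ approximating $a$ that is ``spread uniformly'' across the partition $\Xi_n$, so that Lemma~\ref{LemmaGInvar} gives $\mu_i(A\cap B_n)\to\mu_i(A)\mu(B)\approx a_i b_i$ for each ergodic $\mu_i$. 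This realizes $a\cdot b$ as $F(A\cap B_n)$ approximately. Now apply Proposition~\ref{PropositionPropertiesProj}(3): $P^{X\setminus A}\,P^{X\setminus B_n} = P^{(X\setminus A)\cup(X\setminus B_n)} = P^{X\setminus(A\cap B_n)}$. To turn the operator identity into a scalar identity $tr(P^{X\setminus A})\,tr(P^{X\setminus B_n}) = tr(P^{X\setminus(A\cap B_n)})$ I need the two projections $P^{X\setminus A}$ and $P^{X\setminus B_n}$ to be, in the trace, ``independent'' --- this should follow because $A$ can be chosen $G_m$-invariant-mod-$\varepsilon$ for the relevant level and the involutions implementing $P^{X\setminus A}$ (from Lemma~\ref{LemmaSequenceInvolutions}) can be taken to live on edges of levels disjoint from those defining $B_n$, making the corresponding unitaries behave independently under $tr$. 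That independence / disjoint-levels bookkeeping is the delicate point; I would handle it by choosing the approximating sets and the sequences of involutions at well-separated telescoping levels, then invoking continuity of $\varphi$ (part (1)) and Lemma~\ref{LemmaContinuityTraceOnPA} to pass to the limit and conclude $\varphi(a\cdot b) = \varphi(a)\varphi(b)$.
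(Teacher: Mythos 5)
Parts (1) and (2) of your proposal follow the paper's own route and are fine: continuity is immediate from Lemma~\ref{LemmaContinuityofPhi}, and monotonicity comes from Lemma~\ref{LemmaMain} (producing $g(A)\subset B$), Proposition~\ref{PropositionPropertiesProj}(1)--(2), and then density of $M$ plus continuity. A small point: Lemma~\ref{LemmaContinuityofPhi} is stated for $c\in[0,1)^k$ but its $\delta$ only involves $\min_i c_i$, so your use of it at points with some coordinate equal to $1$ is legitimate.

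For part (3) there is a genuine gap. You correctly set up everything the paper uses --- $G_n$-invariant sets $B_n$ with $F(B_n)\to b$, Lemma~\ref{LemmaGInvar} to get $\mu_i(A\cap B_n)\to a_ib_i$, and the operator identity $P^{X\setminus A}P^{X\setminus B_n}=P^{X\setminus(A\cap B_n)}$ from Proposition~\ref{PropositionPropertiesProj}(3) --- but you stop exactly where the real argument begins. Your proposed mechanism for factoring the trace, namely that the involutions implementing the two projections can be supported on disjoint telescoping levels and hence ``behave independently under $tr$,'' is not a proof and is not true in general: for commuting unitaries (or projections) in a $II_1$ factor one does \emph{not} have $tr(PQ)=tr(P)\,tr(Q)$. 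The missing idea is the centrality argument: since $B_n$ is $G_n$-invariant, Proposition~\ref{PropositionPropertiesProj}(2) shows $P^{X\setminus B_n}$ commutes with every $\pi(g)$, $g\in G_n$; hence any weak accumulation point $P$ of the sequence $\{P^{X\setminus B_n}\}$ commutes with all of $\pi(G)$ and lies in the center of $\mathcal M_\pi$. Because $\pi$ is a factor representation, $P=cI$ is a scalar, and continuity of the trace identifies $c=\lim_n tr(P^{X\setminus B_n})=\varphi(b)$. Then $tr\bigl(P^{X\setminus A}P^{X\setminus B_n}\bigr)\to tr\bigl(P^{X\setminus A}\cdot cI\bigr)=\varphi(a)\varphi(b)$, which combined with your setup gives $\varphi(a\cdot b)=\varphi(a)\varphi(b)$ for $a\in M$, and the general case follows by continuity. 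Without this step (or an equivalent one), your part (3) does not close.
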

\begin{proof}(1) The continuity of $\varphi$ immediately follows  from the definition
of $\varphi$ and Lemma \ref{LemmaContinuityofPhi}.

(2) First assume that $a,b\in M$ and $a<b$. Choose clopen sets $A$ and $B$ such that $a = F(A)$ and $b=F(B)$.  By Lemma
\ref{LemmaMain} there exists an element $g\in G$ such that $g(A)\subset B$. It follows
that $$\varphi(a)=tr(P^{X\setminus A})\leq tr(P^{X\setminus B})=\varphi(b).$$ The density of $M$ in $(0,1]^k$ and continuity of $\varphi$ imply that $\varphi(a)\leq \varphi(b)$ for any $a,b\in (0,1]^k$ with $a\leq b$.

(3) Let $a,b\in (0,1]^k$. Assume first of all that $a\in M$. Let $A$ be a
 clopen set such that $a = F(A)$. Using Lemma \ref{LemmaDenseSubset}, we can find a  sequence $\{B_n\}_{n \geq 1}$ of clopen sets with
  $\lim\limits_{n\rightarrow\infty}F(B_n) = b$ and $B_n$ being $G_n$-invariant.

   Lemma \ref{LemmaGInvar} implies that, for every $i=1,\ldots,k$,
$\lim\limits_{n\rightarrow\infty}\mu_i(A\cap B_{n}) = a_ib_i$. By the continuity of $\varphi$, we have that
$$\varphi(a\cdot b)=\lim\limits_{n\rightarrow\infty}\varphi(F(A\cap B_{n})).$$
Consider a sequence of projections $\{P^{X\setminus B_{n}}\}_{n\geq 1}$.
Passing to a subsequence (we keep the same notation),
we can assume that $P^{X\setminus B_{n}}$ is weakly convergent
 to an operator $P$, see \cite[Theorem 5,1,3]{kadison_ringrose:I}.
The $G_n$-invariance of the set $X\setminus B_{n}$ implies that
the projection $P^{X\setminus B_{n}}$ commutes with all operators of the
form  $\pi(g),g\in G_{n}$, see Proposition \ref{PropositionPropertiesProj}.
 Therefore, the operator  $P$ belongs to the center of the algebra $\mathcal M_\pi$ and, hence, is a scalar operator $P = cI$. Then by the continuity of $\varphi$, we get that $$\varphi(b)  = \lim\limits_{n\to\infty}\varphi(F(B_{n})) = \lim\limits_{m\to\infty}tr(P^{X\setminus B_{n}})  = tr(P) = c.$$

Using Proposition \ref{PropositionPropertiesProj}, we obtain that
\begin{eqnarray*}\varphi(F(A\cap B_{n})) & = & tr\left(P^{X\setminus (A\cap B_{n})}\right) \\
& = &  tr\left(P^{X\setminus A} P^{X\setminus B_{n}}\right) \\
& \rightarrow  &
tr\left(P^{X\setminus A}\right)\varphi(b)\mbox{ (as }n\to\infty\mbox{)} \\
& = & \varphi(a)\varphi(b).\end{eqnarray*} Therefore, $\varphi(a\cdot b)  = \varphi(a)\varphi(b)$ for all $a\in M$ and $b\in (0,1]^k$.
The continuity of $\varphi$ implies that $\varphi(a\cdot b)=\varphi(a)\varphi(b)$ for all $a,b\in (0,1]^k$.
\end{proof}


Now we are ready to establish the main result of the paper.  Recall that the character $\chi$ is regular if $\chi(g) = 0$ for all group elements $g$ different from the identity.

\begin{theorem}\label{TheoremFormulaCharacters} Let $G$ be the full group of a Bratteli diagram. Let  $\chi: G\rightarrow \mathbb C$ be an non-zero  indecomposable character of $G$. Assume that the action of $G$ on the Bratteli diagram has only a finitely many ergodic measures, say $\mu_1,\ldots,\mu_k$, and that either the group $G$ is simple or  the character $\chi$ is continuous with respect to the uniform topology. Then there exist $\alpha_1,\ldots,\alpha_k\in \{0,1,\ldots,\infty\}$ such that
\begin{equation}\label{EquationFormulaCharacter}\chi(g) = \mu_1(Fix(g))^{\alpha_1}\mu_2(Fix(g))^{\alpha_2}\cdots\mu_k(Fix(g))^{\alpha_k}\end{equation} for any  $g\in G$. If some $\alpha_i = \infty$, then the character $\chi$ is regular.
\end{theorem}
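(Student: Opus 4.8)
The plan is to combine Theorem~\ref{TheoremCharactersProjectors}, which reduces $\chi(g)$ to $tr(P^{supp(g)})$, with the analytic properties of the function $\varphi$ established in Proposition~\ref{PropositionFuncProperties}. First I would observe that for any $g$, writing $A = supp(g)$ and using that $Fix(g) = X\setminus A$, the value $\chi(g) = tr(P^{X\setminus(Fix(g))}) = \varphi(F(Fix(g))) = \varphi(\mu_1(Fix(g)),\ldots,\mu_k(Fix(g)))$ whenever $Fix(g)\neq\varnothing$; the boundary case $Fix(g)=\varnothing$ (i.e.\ $supp(g)=X$) is handled separately by a limiting argument, approximating $\mu_i(Fix(g))=0$ by clopen sets $A_n$ with $F(A_n)\to 0$ and invoking the continuity of $\varphi$ together with Lemma~\ref{LemmaContinuityofPhi}. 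Thus the whole theorem reduces to determining the monotone, continuous, multiplicative function $\varphi:(0,1]^k\to[0,1]$.

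Next I would solve the functional equation. Setting $f_i(s) = \varphi(1,\ldots,1,s,1,\ldots,1)$ with $s$ in the $i$-th slot, multiplicativity gives $\varphi(t_1,\ldots,t_k) = f_1(t_1)\cdots f_k(t_k)$ and each $f_i$ is a continuous, monotone, multiplicative function $(0,1]\to[0,1]$ with $f_i(1)=\varphi(1,\ldots,1)$. If $\chi\neq 1$ then $P^X=0$ by Remark~\ref{RemarkPropertiesProjectors}(1), so $\varphi(1,\ldots,1)=tr(P^X)$ —wait, more carefully $\varphi(1,\ldots,1)$ is a limit of $tr(P^{X\setminus A_n})$ with $\mu_i(A_n)\to 1$, and multiplicativity forces $\varphi(1,\ldots,1)^2=\varphi(1,\ldots,1)$, hence it is $0$ or $1$; the value $0$ would force $\varphi\equiv 0$ and thus $\chi(g)=0$ for all $g$ with $Fix(g)\neq X$, contradicting $\chi(e)=1$ unless one argues $\varphi(1,\dots,1)=1$ directly from $Fix(e)=X$ and $tr(P^\varnothing)=tr(I)=1$. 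So $f_i(1)=1$, and substituting $s=e^{-u}$, $u\ge 0$, the map $g_i(u)=-\log f_i(e^{-u})$ (allowing the value $+\infty$) satisfies Cauchy's equation $g_i(u+v)=g_i(u)+g_i(v)$, is monotone, hence $g_i(u)=\alpha_i u$ for some $\alpha_i\in[0,\infty]$, giving $f_i(s)=s^{\alpha_i}$ and therefore $\varphi(t_1,\ldots,t_k)=t_1^{\alpha_1}\cdots t_k^{\alpha_k}$.

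It remains to show the exponents $\alpha_i$ are integers (or $\infty$), which I expect to be the main obstacle and the place where the combinatorial Lemma~\ref{lemmaExistenceSI} and the semigroup structure of the projections must be used. The idea is that for a transposition-type element $g$ supported on a small cylinder set one can build, using the even-diagram structure and the ``gluing'' property of $G$, many copies $s_1,\ldots,s_N$ supported on disjoint pieces so that the element $\prod s_j$ has support equal to a union and is conjugate to a fixed model element; comparing $\chi$ of the product with the product of the $\chi(s_j)$ via Theorem~\ref{TheoremCharactersProjectors} and the relation $P^{A}P^{B}=P^{A\cup B}$ for disjoint clopen $A,B$, one gets that $\varphi$ restricted to values of the form $\mu_i(\text{small cylinder})$ behaves like raising to an integer power dictated by how the cylinder splits across the levels of the (telescoped, even) diagram. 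Concretely, if a clopen set $A$ decomposes as $A=A_1\sqcup A_2$ with $A_1,A_2$ $G$-equivalent, then $P^{X\setminus A} = P^{X\setminus A_1}P^{X\setminus A_2}$ forces $\varphi(F(A)) = \varphi(F(A_1))\varphi(F(A_2)) = \varphi(\tfrac12 F(A))^2$ in the uniquely ergodic slots, and iterating halvings pins down $\alpha_i$ to be a non-negative integer; the value $\alpha_i=\infty$ corresponds to $\varphi$ vanishing on $\{t_i<1\}$, i.e.\ $\chi$ regular, which I would record as the final case. I would close by noting that everything used — Theorem~\ref{TheoremCharactersProjectors}, Lemmas~\ref{LemmaMain}, \ref{lemmaExistenceSI}, \ref{LemmaContinuityofPhi}, \ref{LemmaGInvar}, and Proposition~\ref{PropositionFuncProperties} — is available under the stated hypothesis that $G$ is simple or $\chi$ is uniformly continuous, so no extra assumption is needed.
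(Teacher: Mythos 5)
The first half of your proposal---reducing $\chi(g)$ to $\varphi(F(Fix(g)))$ via Theorem~\ref{TheoremCharactersProjectors}, factoring $\varphi$ into one-variable multiplicative functions, and solving Cauchy's equation to get $\varphi_i(x)=x^{\alpha_i}$ with $\alpha_i\in[0,\infty]$---is essentially the paper's argument and is sound. The genuine gap is in the integrality step, and it is both a concrete error and a conceptual one. Concretely: for disjoint clopen $A_1,A_2$ with $A=A_1\sqcup A_2$, the semigroup law gives $P^{X\setminus A_1}P^{X\setminus A_2}=P^{(X\setminus A_1)\cup(X\setminus A_2)}=P^{X}$, \emph{not} $P^{X\setminus A}$; so the identity $\varphi(F(A))=\varphi(F(A_1))\varphi(F(A_2))$ you want from ``halving'' is simply false (test it on $\varphi(s)=s$: $1-t\neq(1-t/2)^2$). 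Conceptually: no amount of multiplicativity, monotonicity, continuity, or halving can force $\alpha_i$ to be an integer, because $t\mapsto t^{\alpha}$ satisfies every one of those constraints for \emph{every} real $\alpha\ge 0$. The functional-equation machinery is exhausted at that point.

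What actually pins down the integers is positive-definiteness, which your sketch never invokes. The paper's route is: concentrate a clopen set $A$ on the $i$-th ergodic component ($\mu_i(A)>1-\epsilon$, $\mu_j(A)<\epsilon$ for $j\neq i$, via Lemma~\ref{LemmaDenseSubset}), split it into $2^n$ disjoint $G$-equivalent pieces, and use the involutions permuting these pieces to embed $S(2^n)$ into $G$. Restricting $\chi$ to this copy and letting $\epsilon\to 0$ produces the class function $s\mapsto\bigl(card(fix(s))/2^n\bigr)^{\alpha_i}$ on $S(2^n)$, which must be positive-definite for every $n$; assembling these gives the function $\nu(Fix(\cdot))^{\alpha_i}$ on $S(2^\infty)$, and \cite[Proposition 12]{dudko:2011} shows such a function is a character only when $\alpha_i$ is a non-negative integer. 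Your proposal would need to be repaired by replacing the halving argument with this (or an equivalent) positive-definiteness obstruction; the handling of $\alpha_i=\infty$ as the regular character and the rest of your outline can stand.
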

\begin{proof} (1) For $i=1,\ldots,k$ and $x\in \mathbb R$, set $e_i(x) = (1,\ldots,1,x,1,\ldots,1)$, where $x$ stands on the $i$-th position.
Define a function $\varphi_i:(0,1]\rightarrow [0,1]$ by $\varphi(x) = \varphi(e_i(x))$.
 Proposition \ref{PropositionFuncProperties} implies that $$\varphi_i(x\cdot y)=\varphi(e_i(x\cdot y)) = \varphi(e_i(x)\cdot e_i(y)) = \varphi_i(x)\cdot \varphi_i(y)\mbox{ for all }x,y\in (0,1].$$

If $\varphi_i(x)=0$ for some $x \in (0,1]$, then the multiplicativity of $\varphi_i$ implies that $\varphi_i(y)=0$ for every $y\in (0,1]$. Since $\varphi((x_1,\ldots,x_k)) = \varphi_1(x_1)\cdots \varphi_k(x_k)$, we get that $\varphi(x) = 0$ for every $x\in (0,1]$ and  $\chi (g) =  0$, for all $g\neq 1$, by Theorem \ref{TheoremCharactersProjectors}. Thus, the character $\chi$ is regular.

If $\chi$ is not regular, then  $\varphi_i(x)\neq 0$ for all $x\in(0,1]$ and $i=1,\ldots, k$. Set $f_i(t)=-\log\varphi_i(e^{-t}).$ Then $f_i$ is continuous and additive on $\mathbb R_+ = \{t > 0 : t\in \mathbb R\}$. Thus, solving Cauchy's functional equation, we can find a real number $\alpha_i\geq 0$ such that $f_i(t) = \alpha_i t$ for all $t\in \mathbb R_+$. Therefore, $\varphi_i(x) = x^{\alpha_i}$ for every $x\in (0,1]$.
Thus, $$tr(P^A) = \varphi(F(X\setminus A)) = \mu_1(X\setminus A)^{\alpha_1}\mu_2(X\setminus A)^{\alpha_2}\cdots\mu_k(X\setminus A)^{\alpha_k}$$ for any
clopen set $A$.  Now, the formula (\ref{EquationFormulaCharacter})  follows from Theorem \ref{TheoremCharactersProjectors}.

(2) It only remains to show that the parameters $\alpha_1,\ldots,\alpha_k$ are integers.
 Fix   $1\leq i\leq k$ and  $\epsilon>0$. By Lemma \ref{LemmaDenseSubset},
there exists a clopen set $A$
such that $\mu_i(A)>1-\epsilon$ and $\mu_j(A)<\epsilon$ for $j\neq i$.

For every $n\in\mathbb{N}$, using Lemma \ref{LemmaMain}  choose mutually disjoint  $G$-equivalent clopen sets

$$A_1^{(n)},A_2^{(n)},\ldots, A_{2^n}^{(n)}\subset A\mbox{ such that }
\mu_i\left(A\setminus\bigcup_{j = 1}^{2^n} A_j^{(n)}\right)<\varepsilon.
$$ Set $$c_j^{(n,\varepsilon)} = \mu_j\left(\bigsqcup_{p=1}^{2^n} A_p^{(n)}\right).$$ Observe that
$
c_i^{(n,\varepsilon)}>1-2 \varepsilon$  and $c_j^{(n,\varepsilon)}<\varepsilon$  for all $j\neq i.$ Choose involutions
$s_1,\ldots,s_{2^n-1}$
such that $$s_j(A_j) = A_{j+1}
\text{ and }supp(s_j)=A_j\cup A_{j+1}\text{ for each }1\leq j < 2^n.$$

Denote by $H_n$ the subgroup generated by $s_1,\ldots, s_{2^n-1}$. Notice that $H_n$ is isomorphic to the symmetric group $S(2^n)$ on $2^n$ elements. Denote by $f_{n,\varepsilon}$ the isomorphism of $S(2^n)$ onto  $H_n$.

Let $s\in S(2^n)$ and $fix(s) = \{1\leq j\leq 2^n : s(j) = j\}$.  Since
$\mu_j(A_p^{(n)})=c_j^{(n,\varepsilon)}/2^n$ for each $1\leq j\leq k,1\leq p\leq 2^n$, we get that
\begin{equation}\label{EquationFormulaCharacterAux}\mu_j(Fix(f_{n,\varepsilon}(s))) = (1-c_j^{(n,\varepsilon)})+c_j^{(n,\varepsilon)}\frac{card(fix(s))}{2^n},\end{equation}  for each $1\leq j\leq 2^n$ and $s\in S(2^n)$.

Set  $\rho_{n,\epsilon}(s)=\chi_\alpha(f_{n,\varepsilon}(s))$ for every $s\in S(2^n)$. Then $\rho_{n,\epsilon}$ is a character of the group $S(2^n)$. It follows from the equations (\ref{EquationFormulaCharacter}) and (\ref{EquationFormulaCharacterAux}) that

$$\rho_n(s) \stackrel{def}{=} \lim\limits_{\varepsilon\to 0}\rho_{n,\epsilon}(s) =
 \left( \frac{card(fix(s))}{2^n}\right)^{\alpha_i},\;s\in S(2^n),$$
is a character of $S(2^n)$.
Consider the full group $S(2^\infty)$ of the Bratteli diagram
associated to the $2$-odometer, see \cite{dudko:2011} for description of this group.
Let $\nu$ be the unique invariant measure for $S(2^\infty)$.
Consider the function $\rho(s) = \nu(Fix(s))^{\alpha_i}$,
 $s\in S(2^\infty)$. Notice that the symmetric group $S(2^n)$
can be seen as a subgroup of $S(2^\infty)$ that permutes only
the first $n$ edges of every infinite path. Moreover,
 it is easy to see that $\rho(s) = \rho_n(s)$
for every $s\in S(2^n)$ and $n\geq 1$.
Since for every $n\geq 1$  the function $\rho_n: S(2^n)\rightarrow
 \mathbb C$ is a character, we get that  $\rho: S(2^\infty)\rightarrow
 \mathbb C$ is also a character.

Proposition 12 in \cite{dudko:2011} implies that the function $\rho$ can be a character only if $\alpha_i$ is an integer. We would like to mention that the proof of  \cite[Proposition 12]{dudko:2011}  is self-contained and is independent of other results from \cite{dudko:2011}.

\end{proof}

\begin{remark} (1) The theorem above implies that if $\{A_n\}$ is a sequence of clopen sets such that $\mu(A_n)\to 0$ for every  $\mu\in\mathcal M(G)$, then $tr(P^{A_n})\to 1$ as $n\to\infty$ provided the representation is built over a non-regular character.

(2) If some number $\alpha_i = 0$ in the theorem above, then the trace does not depend on the measure $\mu_i$.

\end{remark}

 Theorem \ref{TheoremFormulaCharacters} implies that non-regular indecomposable
 characters on {\it simple} full grops
are automatically continuous with respect to the uniform topology. Observe that this is not true if the group
$G$ is not simple. For instance, characters of the form $\chi(g)=\rho([g])$
(see Conjecture in Section \ref{SectionPreliminaries}) are discontinuous if $\rho$ is non-trivial.
 We would like to mention that the automatic continuity
(in a  stronger sense) holds for many ``big''
transformation groups  preserving various structures,
see, for example, \cite{kechris_rosendal:2007},
\cite{rosendal_solecki:2007}, \cite{rosendal:2009},
 and \cite{kittrell_tsankov:2010}. Using Proposition \ref{PropositionAutomaticContinuity} and Theorem \ref{TheoremFormulaCharacters}, we get the following result.

\begin{corollary}  Let $G$ be the simple full group of a Bratteli diagram $B$. Let $X$ be the path-space of the diagram $B$. Assume that the system $(X,G)$ only has a finite number of ergodic measures. Then any finite factor representation of the group $G$ corresponding to a non-regular character is automatically continuous with respect to the uniform topology.

\end{corollary}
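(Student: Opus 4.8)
The plan is to derive this from the explicit character formula of Theorem \ref{TheoremFormulaCharacters} together with Proposition \ref{PropositionAutomaticContinuity}. Let $\pi: G\to\mathcal M_\pi$ be a finite factor representation of $G$ corresponding to a non-regular character, and let $\chi(g)=tr(\pi(g))$ be the associated indecomposable character. By Proposition \ref{PropositionAutomaticContinuity} it suffices to prove that $\chi$ is continuous with respect to the uniform topology, that is, with respect to the (bi-invariant) metric $D$ of Definition \ref{DefinitionUniformTopology}.

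Since $G$ is simple, Theorem \ref{TheoremFormulaCharacters} applies with no continuity hypothesis and gives $\chi(g)=\mu_1(Fix(g))^{\alpha_1}\cdots\mu_k(Fix(g))^{\alpha_k}$ with $\alpha_i\in\{0,1,\ldots,\infty\}$. Because $\chi$ is non-regular, none of the $\alpha_i$ equals $\infty$, so $s:=\alpha_1+\cdots+\alpha_k$ is a finite non-negative integer; in particular $\chi$ is real-valued with $0\le\chi\le1$.

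Next I would reduce continuity of $\chi$ to continuity at the identity, which holds for any character. Writing $(\pi,H,\xi)$ for the GNS data (so $\|\xi\|=1$), Cauchy-Schwarz gives $|\chi(g)-\chi(h)|\le\|\pi(g)\xi-\pi(h)\xi\|=\sqrt{2(1-\chi(h^{-1}g))}$, and invariance of $D$ under the group operations yields $D(h^{-1}g,e)=D(g,h)$, so it is enough to control $\chi$ near $e$. Now $D(g,e)=\sup_{\mu\in\mathcal E(G)}\mu(supp(g))=\max_{1\le i\le k}\mu_i(supp(g))$, hence $D(g,e)<\delta$ forces $\mu_i(Fix(g))>1-\delta$ for every $i$ and therefore $1\ge\chi(g)=\prod_{i=1}^k\mu_i(Fix(g))^{\alpha_i}>(1-\delta)^{s}$. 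Given $\varepsilon>0$, pick $\delta>0$ with $1-(1-\delta)^{s}<\varepsilon$; then $D(g,e)<\delta$ implies $|\chi(g)-1|<\varepsilon$. Thus $\chi$ is continuous at $e$, hence everywhere, and Proposition \ref{PropositionAutomaticContinuity} finishes the proof.

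I do not anticipate a genuine obstacle: the only delicate point is that the metric $D$ averages only over ergodic measures, but this is precisely why the argument closes, since under our hypothesis $\mathcal E(G)=\{\mu_1,\ldots,\mu_k\}$ is finite and these are exactly the measures entering the formula for $\chi$, so ``$D(g,e)$ small'' is literally ``$\mu_i(Fix(g))$ close to $1$ for all $i$''. It is also worth noting that the non-regularity hypothesis is essential, since the regular character (the case where some $\alpha_i=\infty$) is discontinuous in the uniform topology.
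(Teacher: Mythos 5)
Your proof is correct and takes essentially the same route as the paper, which derives the corollary in one line by combining Theorem \ref{TheoremFormulaCharacters} (simplicity removes the continuity hypothesis, and non-regularity forces all $\alpha_i$ finite) with Proposition \ref{PropositionAutomaticContinuity}. You have merely supplied the routine verification the paper leaves implicit, namely that $g\mapsto\prod_{i=1}^{k}\mu_i(Fix(g))^{\alpha_i}$ is uniformly continuous for the metric $D$, via the Cauchy--Schwarz reduction to continuity at the identity.
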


As a corollary of Theorem \ref{TheoremFormulaCharacters}, we also obtain the classification of continuous characters for full groups of minimal dynamical systems.  Let $T: X\rightarrow X$ be a homeomorphism of a Cantor set $X$. Assume that the dynamical system $(X,T)$ is minimal, i.e every orbit of $T$ is dense in $X$. Denote by $[T]$ the set of all homeomorphisms $S$ of $X$ such that $Sx = T^{n_S(x)}x$, $x\in X$, for some function $n_S:X\rightarrow \mathbb Z$.
Then  $[T]$ is a group, termed the {\it full group of the system $(X,T)$}, see \cite{giordano_putnam_skau:1999} and \cite{bezuglyi_medynets:2008}   for more details on this group.

Assume that the system $(X,T)$  has a finitely many of ergodic measures, say
 $\mu_1,\ldots,\mu_k$. Endow the group $[T]$ with the uniform topology.
 Note that the metric generating this topology is defined by $$D(g,h)=
 \max_{i=1,\ldots,k}\mu_i(\{x\in X : g(x)\neq h(x)\}).$$
Denote by $\pi: [T]\rightarrow \mathcal M_\pi$ a continuous factor representation of the full group  $[T]$.

Each minimal dynamical system can be represented as a Vershik map acting on the path space of some Bratteli diagram, see \cite{herman_putnam_skau:1992}. Let $G$  be the full group of the Bratteli diagram associated to the system $(X,T)$.
 Observe that the group $G$ is dense in $[T]$ with respect to the uniform topology, see \cite[Theorem 2.8]{bezuglyi_dooley_kwiatkowski:2006-1} or \cite[Theorem 2.1]{medynets:2007} for a more general result.  This shows that the group $\pi(G)$ generates the von Neumann algebra $\mathcal M_\pi$.

Let $\chi$ be an indecomposable character of $[T]$ corresponding to the representation $\pi$. Since $\pi(G)$ generates $\mathcal M_\pi$, the restriction of $\chi$ onto $G$ is an indecomposable character of $G$. Using Theorem  \ref{TheoremFormulaCharacters}, we get   the following result describing the structure of continuous characters on the full group $[T]$. In particular, this result  gives an algebraic criterion of unique ergodicity for $(X,T)$.

\begin{corollary}\label{CorollaryCharactersMinimalSystems}  Let $(X,T)$ be a Cantor minimal system. Assume that $T$ has only a finitely many ergodic measures $\mu_1,\ldots,\mu_k$. Let $\chi: [T]\rightarrow \mathbb C$ be a continuous (with respect to the uniform topology) indecomposable character. Then  there exist parameters $\alpha_1,\ldots,\alpha_k\in \{0,1,\ldots, \infty \}$ such that
$$\chi(g) = \mu_1(Fix(g))^{\alpha_1}\mu_2(Fix(g))^{\alpha_2}\cdots\mu_k(Fix(g))^{\alpha_k}$$ for any $g\in [T]$.
\end{corollary}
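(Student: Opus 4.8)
The plan is to transfer the problem to the dense subgroup $G$, invoke Theorem \ref{TheoremFormulaCharacters} there, and then push the formula back to all of $[T]$ by continuity and density.

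First I would set up the dictionary already sketched in the paragraphs above: fix a Bratteli--Vershik model of $(X,T)$ and let $G$ be the full group of the associated Bratteli diagram, which is simple because $(X,T)$ is minimal. Then $(X,G)$ is minimal, and the $G$-invariant (resp. ergodic) probability measures coincide with the $T$-invariant (resp. ergodic) ones, so $\mathcal{E}(G) = \{\mu_1,\ldots,\mu_k\}$ is finite. Since $G$ is dense in $[T]$ for the uniform topology, $\pi(G)$ generates $\mathcal{M}_\pi$; hence $\chi_0 := \chi|_G$ is an indecomposable character of $G$, and it is continuous for the uniform topology of $G$, this topology being the one induced from $[T]$.

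Next I would apply the continuous-character branch of Theorem \ref{TheoremFormulaCharacters} to $\chi_0$ (recall $G$ need not be simple here, which is why one uses continuity rather than simplicity): there are $\alpha_1,\ldots,\alpha_k \in \{0,1,\ldots,\infty\}$ with $\chi_0(g) = \mu_1(Fix(g))^{\alpha_1}\cdots\mu_k(Fix(g))^{\alpha_k}$ for all $g\in G$. A continuous character cannot be regular --- the identity is not isolated in the uniform topology --- so in fact every $\alpha_i$ is finite; alternatively, if one prefers to allow $\alpha_i=\infty$, the regular case is disposed of directly, since minimality of $(X,G)$ gives $\mu_i(Fix(h))<1$ for all $h\neq e$ and hence both sides of the formula vanish off the identity. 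In either case the right-hand side is a well-defined function on all of $[T]$.

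Finally I would extend the identity from $G$ to $[T]$. From $Fix(g)\triangle Fix(h)\subseteq\{x: g(x)\neq h(x)\}$ we get $|\mu_i(Fix(g))-\mu_i(Fix(h))|\le D(g,h)$ for each $i$, so $g\mapsto(\mu_1(Fix(g)),\ldots,\mu_k(Fix(g)))$ is continuous from $[T]$ into $[0,1]^k$; composing with $(t_1,\ldots,t_k)\mapsto t_1^{\alpha_1}\cdots t_k^{\alpha_k}$ (continuous since all $\alpha_i$ are finite) shows that $g\mapsto\prod_i\mu_i(Fix(g))^{\alpha_i}$ is continuous on $[T]$. This function agrees with the continuous function $\chi$ on the dense subgroup $G$, hence everywhere on $[T]$, which is the asserted formula. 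I expect no real obstacle here: the dynamical facts (matching of invariant measures, density of $G$ in $[T]$) and the reduction to a character of $G$ are standard or recalled above, and Theorem \ref{TheoremFormulaCharacters} carries the weight; the only point requiring care is ensuring the exponent-$\infty$ case does not interfere with the continuity-and-density extension.
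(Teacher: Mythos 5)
Your argument follows the paper's route exactly: pass to the full group $G$ of a Bratteli--Vershik model of $(X,T)$, use density of $G$ in $[T]$ (uniform topology) to see that $\pi(G)$ generates $\mathcal M_\pi$ and hence that $\chi|_G$ is an indecomposable continuous character of $G$, apply the continuity branch of Theorem \ref{TheoremFormulaCharacters}, and extend the formula back to $[T]$ by continuity and density. The only blemish is the passing claim that $G$ is simple because $(X,T)$ is minimal --- minimality only gives simplicity of $D(G)$, not of $G$ (Proposition \ref{PropositionSimpleDiagrams}) --- but you immediately discard this and rely on continuity instead, so the argument is unaffected.
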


%


%
\section{Examples of Factor Representations} \label{SectionExampleRepresentations}
In this section we  construct all possible (up to quasi-equivalence) finite factor representations for the simple full group $G$ under the assumption  that the group $G$ has finitely many  ergodic measures.  The constructions of group representations presented in this section have been  known in ergodic theory, see, for example, \cite{feldman_moore:2}. We  show that these representations give rise to indecomposable characters.

\subsection{Regular character}

We will first consider the case of the regular character $\chi$, i.e. $\chi(g) = 0$ if $g\neq id$ (the identity of the group) and $\chi(id) = 1$. Set $H = l^2(G)$. Define the representation $\pi$ of $G$ on $H$ by $(\pi(g)f)(h) = f(g^{-1}h)$, $h\in G$. Set $\xi = \delta_{id}$. Clearly, the vectors $\{\pi(g)\xi : g\in G\}$ are mutually orthogonal. Consider the group von Neumann algebra $\mathcal M_\pi = \{\pi(g) : g\in G\}''$. Since the group $G$ has the infinite conjugacy class property, the algebra $\mathcal M_\pi$ is a $II_1$ factor. The unique trace on $\mathcal M_\pi$ is given by $tr(Q) = (Q\xi,\xi)$, $Q\in \mathcal M_\pi$. Therefore, $\chi(g) = tr(\pi(g))$ is a regular character.

\subsection{Non-regular characters}
Denote by $\widetilde{X}$ the orbit equivalence relation of $(X,G)$, i.e.
$$\widetilde{X} = \{(x,y)\in X\times X:x\sim y\},$$
where $x\sim y$  if there is $g\in G$ such that $g(x) = y$. Note that $\widetilde{X}$ is a Borel subset of $X\times X$. Assume that  $\{\mu_1,\ldots,\mu_k\}$ is a family  of ergodic measures for the dynamical system $(X,G)$. Each ergodic measure $\mu_i$ can be lifted up to the {\it left counting measure} on $\widetilde{X}$ by $$\widetilde{\mu_i}(A) = \int_{X} card(A_x)d\mu_i(x),$$ where $A\subset \widetilde{X}$ is a Borel set and
$A_x = \{(x,y) \in A  \}$, see  \cite[Theorem 2]{feldman_moore:1} for the details.

The group $G$ has  the left and right action on the space $\widetilde{X}$
defined as $g(x,y) = (gx,y)$ and $g(x,y) = (x,gy)$,  $(x,y)\in \widetilde{X}$,
$g\in G$, respectively.  The measure $\widetilde \mu_i$ is
Borel and invariant with respect to both the left and right action of $G$.
Furthermore, the restriction of $\widetilde{\mu_i}$ on the diagonal
 $\Delta = \{(x,x) : x\in X\}$ coincides with the measure $\mu_i$ (after the identification of $\Delta$ with $X$).

Consider the Hilbert space $H_i = L^2(\widetilde{X},\widetilde{\mu_i})$.  Define the left
representation  of the group $G$ on $H$
by  $$(\pi_i(g)f)(x,y)=f(g^{-1} x,y),\;f\in H_i.$$
Similarly, the right representation $\pi'$ of $G$ is defined as  $(\pi_i'(g)f)(x,y)=f( x,g^{-1}y)$, $f\in H_i.$  The right representation will play an auxiliary and technical role in our construction.
Denote the characteristic function of the diagonal $1_\Delta(x,y)\in H_i$  by $\xi_i$.
   Set
$$\chi_i(g)=(\pi_i(g)\xi_i,\xi_i)\mbox{ for }g\in G.$$  Using straightforward computations, one can get that
\begin{equation}\label{EquationExampleRepresentation1}\chi_i(g) = \mu_i(\{x\in X : g(x) = x\})\mbox{ for all }g\in G.\end{equation}
Then for every $g,h\in G$, we have that
\begin{eqnarray*}\chi_i(h^{-1}gh) & =  & \mu_i(\{x\in X : ghx = hx\}) \\
& = &  \mu_i(h^{-1}\{y\in X : gy = y\}) \\
& = & \chi_i(g).\end{eqnarray*} Notice also  that the function $\chi_i$ is positive-definite. Hence
 $\chi_i$ is a character.

Fix non-negative integers $\alpha_1,\ldots,\alpha_k$ and set $$H=
H_1^{\otimes \alpha_1}\otimes\cdots \otimes
H_k^{\otimes \alpha_k},\;\;\xi=\xi_1^{\otimes \alpha_1}\otimes\cdots \otimes
\xi_k^{\otimes \alpha_k},\;\;\chi=\prod\chi_i^{\alpha_i}.$$

Expand the representations $\pi_i$ and $\pi_i'$ to unitary representations on $H$ by $$\pi=
\pi_1^{\otimes \alpha_1}\otimes\cdots \otimes
\pi_k^{\otimes \alpha_k}\mbox{ and }
\pi'=
(\pi'_1)^{\otimes \alpha_1}\otimes\cdots \otimes
(\pi'_k)^{\otimes \alpha_k}.$$
Thus, $\pi$ is a unitary representation of $G$ on the space $H$. Denote by $\mathcal M_\pi$ the von Neumann algebra generated by the left representation $\pi(G)$. Set $H_0=\overline{Lin(\pi(G)\xi)}.$ We can assume that the algebra $\mathcal M_\pi$ is only defined on the Hilbert space $H_0$. Hence, $\xi$ is a cyclic vector for $\mathcal M_\pi$.

We also observe that    $\xi$ is separating for the algebra $\mathcal{M_\pi}$. Assume the converse, i.e. that there is a non-zero operator $R\in \mathcal M_\pi$  such that $R\xi = 0$.
Since the representations $\pi$ and
$\pi'$ commute, we get that $0 = \pi'(g) R\xi = R\pi'(g)\xi$ for every $g\in G$. The identity
$\pi(g)\xi=\pi'(g^{-1})\xi$, $g\in G$, implies that $\{\pi'(G)\xi\}$ is dense in $H_0$. Hence $R = 0$, which is a contradiction.

Note also that the   GNS-construction applied to the character $\chi$ yields precisely the representation  $(H_0,\pi,\xi)$. It follows from the equation (\ref{EquationExampleRepresentation1}) that

\begin{equation}\label{EquationCharacterConstructionPresentations}\chi(g) = \mu_1(Fix(g))^{\alpha_1}\cdots
\mu_k(Fix(g))^{\alpha_k}\mbox{ for all }g\in G.\end{equation} Our goal is to show that the character $\chi$ is indecomposable. This will imply that $\mathcal M_\pi$ is a $II_1$-factor.  We  mention a recent work of Vershik \cite{vershik:2011}, where he  sketches a proof of the fact that the character $\chi$ constructed by one measure $\mu_i$ and $\alpha_i = 1$ is always   indecomposable provided that the group action is totally non-free.

\begin{lemma}\label{LemmaAsymptoticMultiplicativity} Let $G$ be the  full group of a Bratteli diagram.
 For any group elements $g$ and $h$ there exists a sequence $\{h_n\}_{n=1}^\infty$ such that $\chi(g h_n)\to \chi(g)\chi(h)$ for any character $\chi$  defined as in the equation (\ref{EquationCharacterConstructionPresentations}).
\end{lemma}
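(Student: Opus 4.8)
The plan is to reduce to the single-measure functions $\chi_i$ and then exploit asymptotic independence of ``far-away'' coordinates on the Bratteli diagram. Since $\chi=\prod_{i=1}^k\chi_i^{\alpha_i}$ with $\chi_i(g)=\mu_i(Fix(g))$ by (\ref{EquationExampleRepresentation1}), and all the factors lie in $[0,1]$, it suffices to exhibit a single sequence $\{h_n\}$ that works for every $\chi_i$ at once, i.e. with $\mu_i(Fix(gh_n))\to\mu_i(Fix(g))\mu_i(Fix(h))$ for all $i=1,\dots,k$; indeed then $\chi(gh_n)=\prod_i\chi_i(gh_n)^{\alpha_i}\to\prod_i(\chi_i(g)\chi_i(h))^{\alpha_i}=\chi(g)\chi(h)$, a finite product of convergent factors.

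First I would telescope $B$ so that between any two vertices of consecutive levels there are many edges (this changes neither $G$, nor $X$, nor the $\mu_i$), and fix $N$ with $g\in G_N$. For each large $n$ I would take $h_n\in G_{n,n+1}$ to be a $G$-conjugate of $h$ ``living'' on the $(n+1)$-st edge only, built as in the proof of Lemma \ref{lemmaExistenceSI}: decompose $X$ into the clopen sets on which $h$ has a constant period, split the edge sets $E_{v,w}^{(n+1)}$ into families carrying the corresponding cyclic structure, and glue. Two observations make this choice work. First, because $g$ alters only the first $N\le n$ edges while $h_n$ alters only the $(n+1)$-st, the two act on disjoint coordinates; hence a path is fixed by $gh_n$ precisely when it is fixed by $g$ and by $h_n$, so $Fix(gh_n)=Fix(g)\cap Fix(h_n)$. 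Second, $Fix(h_n)$ is a clopen $G_n$-invariant set (no element of $G_n$ touches the $(n+1)$-st edge nor its endpoints), and, $h_n$ being conjugate to $h$ and each $\mu_i$ being $G$-invariant, $\mu_i(Fix(h_n))=\mu_i(Fix(h))$ for every $i$ and every $n$.

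Now fix $i$. If $\mu_i(Fix(g))=0$ or $\mu_i(Fix(h))=0$, then $\mu_i(Fix(gh_n))\le\min\{\mu_i(Fix(g)),\mu_i(Fix(h_n))\}=0=\mu_i(Fix(g))\mu_i(Fix(h))$ and there is nothing to prove. Otherwise $\mu_i(Fix(h))>0$, and Lemma \ref{LemmaGInvar} applied with $A=Fix(g)$ and $B_n=Fix(h_n)$ (a clopen $G_n$-invariant set with $\mu_i(B_n)\equiv\mu_i(Fix(h))>0$) yields $\mu_i(Fix(gh_n))=\mu_i(Fix(g)\cap Fix(h_n))\to\mu_i(Fix(g))\mu_i(Fix(h))$. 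Combining the finitely many coordinates finishes the argument.

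The main obstacle is the construction in the second paragraph: producing, for each large $n$, an \emph{honest} $G$-conjugate of $h$ inside $G_{n,n+1}$, so that it both commutes with $g$ in the strong ``disjoint-coordinates'' sense and keeps all the fixed-point measures unchanged. This is where the telescoping and the block/symmetric-group structure of $G_{n,n+1}$ are used, essentially repeating the combinatorial bookkeeping from the proof of Lemma \ref{lemmaExistenceSI}. Alternatively, one may avoid exact conjugacy and merely pick $h_n\in G_{n,n+1}$ with $\mu_i(Fix(h_n))\to\mu_i(Fix(h))$ for every $i$ — possible since the $\mu_i$ are mutually singular, by the argument of Lemma \ref{LemmaDenseSubset} — which feeds into Lemma \ref{LemmaGInvar} equally well. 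Everything else is routine.
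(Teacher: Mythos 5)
Your proof is correct and runs on the same engine as the paper's: push $h$ to a deep level so that $Fix(gh_n)=Fix(g)\cap Fix(h_n)$ with $Fix(h_n)$ clopen and $G_n$-invariant, then invoke Lemma \ref{LemmaGInvar}. The paper takes essentially your fallback route: it uses Lemma \ref{LemmaDenseSubset} to produce $G_n$-invariant clopen sets $B_n$ with $\mu_i(B_n)\to\mu_i(Fix(h))$ and takes for $h_n$ any element with $Fix(h_n)=B_n$ that preserves the level-$n$ cylinders, rather than a conjugate of $h$. Be aware that your primary route --- an honest $G$-conjugate of $h$ inside $G_{n,n+1}$ --- is not automatic: matching the full cycle type of $h$ within each edge set $E^{(n+1)}_{v,w}$ imposes divisibility constraints (for instance, if every point has $h$-period $p$ one needs $p$ to divide the relevant edge multiplicities), so without further telescoping that version can fail; it is your fallback (which only asks $\mu_i(Fix(h_n))\to\mu_i(Fix(h))$) that actually closes the argument.
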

\begin{proof} Fix a character $\chi$ defined by measures as in
the equation (\ref{EquationCharacterConstructionPresentations}).
For each $i=1,\ldots,k$, set $c_i = \mu_i(Fix(h))$. Using Lemma
\ref{LemmaDenseSubset}, find a sequence of $G_n$-invariant clopen
 sets $B_n$ with $\mu_i(B_n)\to c_i$, $i=1,\ldots,k$, as $n\to\infty$.
 Consider  the clopen partition $$\Xi_n = \{C_v^{(i)} : v\in V_n, i=0,
\ldots p_v^{(n)}-1\}$$ determined by the level $n$ of the diagram.
Choose $h_n$  as any element of the group $G$ such that 
$Fix(h_n) = B_n$  and $h_n(C_v^{(i)}) = C_v^{(i)}$ for all $v\in V_n$
and $i=0,\ldots, p_v^{(n)}-1$.  It is not hard to see that  $supp(g h_n) = supp(g)\cup supp(h_n)$ for every $g\in G_n$. Hence $Fix(g h_n) = Fix(g)\cap Fix(h_n)$.
Using Lemma \ref{LemmaGInvar}, we obtain that
\begin{eqnarray*}\chi(gh_n) & = & \mu_1(Fix(g)\cap Fix(h_n))^{\alpha_1}\cdots
\mu_k(Fix(g)\cap Fix(h_n))^{\alpha_k} \\
&\to & \mu_1(Fix(g))^{\alpha_1}\cdots
\mu_k(Fix(g))^{\alpha_k} \cdot c_1^{\alpha_1}\cdots c_k^{\alpha_k}\\
 &= & \chi(g)\chi(h).\end{eqnarray*}
\end{proof}

\begin{theorem} Let $G$ be the  simple full group  of a Bratteli diagram $B$. Assume that the group $G$ has only finitely many ergodic measures $\{\mu_1,\ldots,\mu_k\}$ on the path-space of $B$.  Let $\chi$ be a character on $G$ of the form
$$\chi(g) = \mu_1(Fix(g))^{\alpha_1}\cdots
\mu_k(Fix(g))^{\alpha_k}\mbox{ for all }g\in G,$$
where $\alpha_1,\ldots,\alpha_k\in \mathbb{N}\cup \{0\}$.
Then $\chi$ is indecomposable.\end{theorem}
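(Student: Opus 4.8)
The plan is to obtain the indecomposability of $\chi$ from the factoriality of $\mathcal{M}_\pi=\pi(G)''$ acting on $H_0$. Since $(H_0,\pi,\xi)$ is exactly the triple produced by the GNS construction for $\chi$ and $\xi$ is cyclic and separating for $\mathcal{M}_\pi$, the character $\chi$ is indecomposable precisely when $\mathcal{M}_\pi$ is a factor. We may assume $\chi\not\equiv 1$ (if all $\alpha_i=0$ then $H=\mathbb{C}$, $\mathcal{M}_\pi=\mathbb{C}I$, and there is nothing to prove). To prove that $\mathcal{M}_\pi$ is a factor it suffices to show that every projection $P$ in the center $\mathcal{M}_\pi\cap\mathcal{M}_\pi'$ is scalar, and for this it is enough to verify the identity
\[
tr\bigl(P\,\pi(g)\bigr)=tr(P)\,\chi(g)\qquad\text{for every }g\in G.
\]
Indeed, this says $tr\bigl((P-tr(P)I)\pi(g)\bigr)=0$ for all $g$; since $Lin\,\pi(G)$ is weak-operator dense in $\mathcal{M}_\pi$ (Theorem \ref{TheoremBicommutant}) and $tr$ is weak-operator continuous on bounded sets, it follows that $tr\bigl((P-tr(P)I)A\bigr)=0$ for all $A\in\mathcal{M}_\pi$, and then, taking $A=P-tr(P)I$ and invoking the faithfulness of $tr$, that $P=tr(P)I$, so $tr(P)\in\{0,1\}$.

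To prove the displayed identity, fix $g\in G$ and a central projection $P$. Apply Lemma \ref{LemmaAsymptoticMultiplicativity} with $g$ in the role of $h$: its proof produces a sequence $\{h_n\}\subset G$, determined by $c_i=\mu_i(Fix(g))$ together with a choice of $G_n$-invariant clopen set $B_n$ with $\mu_i(B_n)\to c_i$, where $h_n$ has $Fix(h_n)=B_n$ and fixes every level-$n$ cylinder setwise, and such that $\chi(a h_n)\to\chi(a)\chi(g)$ for every $a\in G$; equivalently, $\pi(h_n)\xi\to\chi(g)\xi$ weakly in $H_0$, whence $tr\bigl(P\,\pi(h_n)\bigr)=\bigl(\pi(h_n)\xi,P\xi\bigr)\to\chi(g)\,tr(P)$. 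The point (carried out in the next paragraph) is that the freedom left in the choice of $h_n$ on $X\setminus B_n$ can be used to pick, simultaneously, a genuine $G$-conjugate $s_n=k_n\,g\,k_n^{-1}$ of $g$ agreeing with $h_n$ off a clopen set $E_n$ with $\delta_n:=\sup_i\mu_i(E_n)\to 0$. Given such $s_n$, one has $Fix(s_n^{-1}h_n)\supseteq X\setminus E_n$, hence $\chi(s_n^{-1}h_n)\ge(1-\delta_n)^{\alpha_1+\cdots+\alpha_k}\to 1$, and since $\chi$ is real-valued, $\|\pi(h_n)\xi-\pi(s_n)\xi\|^2=2\bigl(1-\chi(s_n^{-1}h_n)\bigr)\to 0$; therefore $tr\bigl(P\,\pi(s_n)\bigr)=tr\bigl(P\,\pi(h_n)\bigr)+o(1)\to\chi(g)\,tr(P)$. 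On the other hand, $P$ being central and $tr$ a trace, $tr\bigl(P\,\pi(s_n)\bigr)=tr\bigl(\pi(k_n)^{-1}P\,\pi(k_n)\,\pi(g)\bigr)=tr\bigl(P\,\pi(g)\bigr)$ for every $n$. Combining, $tr(P\,\pi(g))=\chi(g)\,tr(P)$, as needed.

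The step I expect to be the main obstacle is the simultaneous construction above: showing that the ``spread-out'' element $h_n$ of Lemma \ref{LemmaAsymptoticMultiplicativity} may be taken, after an adjustment on a set of $\mu_i$-measure tending to $0$, conjugate to $g$ in $G$. I would decompose $g$ by its periods, $X=\bigsqcup_p A_p$ with $g|_{A_p}$ consisting of $p$-cycles, and argue for each $p$ separately; since $\mu_i(supp(h_n))=1-\mu_i(B_n)\to\mu_i(supp(g))$ for every $i$, the quantities of $p$-cycle material carried by $h_n$ and by $g$ agree asymptotically in every invariant measure, so after telescoping the diagram one can realize a $G$-conjugate $s_n$ of $g$ whose cycle data over each vertex matches that of $h_n$ outside a set of vanishing measure. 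The combinatorial moves needed --- splitting clopen sets into $G$-equivalent pieces of prescribed sizes and transporting them (Lemmas \ref{LemmaMain} and \ref{lemmaExistenceSI}) --- are precisely those made available by the simplicity of $G$ (Proposition \ref{PropositionSimpleDiagrams}(2); passing to an even diagram also removes any parity obstruction to conjugating permutations on a vertex-column). Finally, once $\mathcal{M}_\pi$ is a factor it is finite, since $\xi$ is a tracial vector, and it cannot be of type $I_n$, because the infinite simple locally finite group $G$, being torsion, has no nontrivial finite-dimensional unitary representation (Jordan--Schur); hence $\mathcal{M}_\pi$ is a $II_1$-factor.
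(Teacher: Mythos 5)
Your overall strategy is sound and genuinely different from the paper's. You prove factoriality of $\mathcal M_\pi$ directly, by showing $tr(P\,\pi(g))=tr(P)\chi(g)$ for every central projection $P$; the paper instead assumes $\chi$ decomposes, invokes Theorem \ref{TheoremFormulaCharacters} to conclude that the disintegration is a countable convex combination $\chi=\sum_n\alpha_n\chi_n$ of characters of the same product form, and then uses Lemma \ref{LemmaAsymptoticMultiplicativity} together with the Cauchy--Schwarz inequality (applied to the identity $\chi(a)\chi(b)=\sum\alpha_i\chi_i(a)\chi_i(b)$) to force all the $\chi_n$ to coincide with $\chi$. The paper's route is short precisely because it leans on the already-proved classification; yours, if completed, would be independent of Theorem \ref{TheoremFormulaCharacters}. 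Your reductions are all correct: the equivalence of indecomposability with factoriality, the density/faithfulness argument giving $P=tr(P)I$, the weak convergence $\pi(h_n)\xi\to\chi(g)\xi$ extracted from Lemma \ref{LemmaAsymptoticMultiplicativity}, the norm estimate $\|\pi(h_n)\xi-\pi(s_n)\xi\|^2=2(1-\chi(s_n^{-1}h_n))$, and the trace identity $tr(P\,\pi(s_n))=tr(P\,\pi(g))$ for a central $P$ and a conjugate $s_n$ of $g$.

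The gap is exactly where you flag it, and it is more serious than your sketch acknowledges: the existence of conjugates $s_n=k_ngk_n^{-1}$ agreeing with $h_n$ off a set of vanishing measure. First, the justification you give is a non sequitur: knowing only that $\mu_i(supp(h_n))=1-\mu_i(B_n)\to\mu_i(supp(g))$ does \emph{not} imply that the $p$-cycle material of $h_n$ and of $g$ match asymptotically --- the $h_n$ of Lemma \ref{LemmaAsymptoticMultiplicativity} is \emph{any} element with $Fix(h_n)=B_n$ preserving the level-$n$ cylinders, and could consist entirely of $2$-cycles while $g$ consists of $3$-cycles, in which case no conjugate of $g$ agrees with $h_n$ outside a small set (the period-$p$ sets of $s_n$ have exactly the measures $\mu_i(A_p)$ for every invariant $\mu_i$). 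So you must build the period structure into $h_n$ from the start: you need $G_n$-invariant clopen sets $D_p^{(n)}$ with $\mu_i(D_p^{(n)})\to\mu_i(A_p)$ for all $i,p$ simultaneously (a partition version of Lemma \ref{LemmaDenseSubset}), and you need $h_n$ to act by $p$-cycles on $D_p^{(n)}$ inside each level-$n$ cylinder while still satisfying $supp(ah_n)=supp(a)\cup supp(h_n)$ for $a\in G_n$. Second, even with matching measures, conjugacy in $G$ is an \emph{exact} combinatorial condition --- equal numbers of $p$-cycles over every vertex of a sufficiently deep level --- so one must correct the counts on an exceptional set $E_n$ whose measure is shown to tend to $0$, using Lemma \ref{LemmaMain} and the splitting available for simple $G$ (Proposition \ref{PropositionSimpleDiagrams}). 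None of this is fatal --- it is comparable in difficulty to Lemma \ref{lemmaExistenceSI} --- but it is the actual content of your proof and is not carried out; as written the argument does not close.
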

\begin{proof}  If the character $\chi$ is decomposable, then there exists a standard probability measure space
$(\Gamma,\nu)$ such that

$$\chi(g) = \int_\Gamma \chi_\gamma(g) d\nu(\gamma),$$ where $\chi_\gamma$ is an indecomposable character, see, for example, \cite[Chapter IV, Theorem 8.21]{Tak}. In view of Theorem \ref{TheoremFormulaCharacters}, there is at most a  countable number of indecomposable characters. Hence $$\chi(g) = \sum_{n\geq 1}\alpha_n\chi_n(g),$$ where $\alpha_n > 0$  and $\chi_n$ is an indecomposable character built as in the equation (\ref{EquationFormulaCharacter}).  We notice that the regular character does not show up amongst $\{\chi_n\}_{n\geq 1}$.  Observe also that $1 = \chi(id) = \sum_{n\geq 1}\alpha_n $.

Fix any two elements $a,b \in G$. Choose a sequence $\{a_n\}$ as in Lemma \ref{LemmaAsymptoticMultiplicativity}. It follows from Lemma \ref{LemmaAsymptoticMultiplicativity} and the dominated convergence theorem that $$\chi(a)\chi(b) = \lim_{n\to \infty} \chi(a b_n) = \lim_{n\to\infty}\sum_{i\geq 1}\alpha_i\chi_i(a b_n) = \sum_{i\geq 1}\alpha_i\chi_i(a)\chi_i(b).$$

Consider $g,h\in G$. Applying the Cauchy-Schwarz inequality, we get that
\begin{eqnarray*}\chi(g)\chi(h) & = & \sum_{i\geq 1}\alpha_i\chi_i(g)\chi_i(h) \\
& \leq & \left(\sum_{i\geq 1}\alpha_i\chi_i(g) \chi_i(g)\right)^{1/2} \cdot \left(\sum_{i\geq 1}\alpha_i\chi_i(h)\chi_i(h)\right)^{1/2}\\
 & = & (\chi(g)\chi(g))^{1/2}(\chi(h)\chi(h))^{1/2}\\
 & = & \chi(g)\chi(h).\end{eqnarray*}
The equality here means that $\chi_i(g) = c\chi_i(h)$ for all indices $i\geq 1$, where $c$ is some constant. Hence $\chi(g) = c \chi(h)$. Repeating these arguments for $h = id$, we get that $\chi_i(g) = q$ for all $i\geq 1 $. Thus, $\chi(g) = q = \chi_i(g)$ for all $i\geq 1$. Since the element $g$ was chosen arbitrarily, we get that $\chi \equiv \chi_{i}$ for all $i$, i.e. $\chi$ is indecomposable.
\end{proof}

It follows from Theorem \ref{TheoremFormulaCharacters} that the list of characters constructed in this section is exhaustive.

%
%

\section{Rational Permutations of the Unit Interval}\label{SectionIntervalPermutations}

In the paper \cite{goryachko_petrov:2010} (see also \cite{goryachko:2008})
Goryachko and Petrov described all indecomposable characters on the group $R$
of rational permutations of the unit interval $[0,1)$. In this section we will
apply Theorem \ref{TheoremFormulaCharacters} to get a new proof of their result.

Denote by $S(n)$ the group of permutation on
the set $\{0,1,\ldots,n-1\}$. For any $n\in \mathbb{N}$ and
a permutation $s\in S(n)$, define a bijection $g_s:[0,1)\rightarrow [0,1)$, which
  permutes subintervals
$[0,1/n),\ldots,[1-1/n,1)$ as follows $$g_s(x)=\frac{s([nx])+\{nx\}}{n},$$  where $[a]$ and $\{a\}$ is the integer and the fraction part of the number $a$, respectively.

The group $R$ is defined as the set of  all
bijections  $\{g_s : s\in S(n),\;n\in\mathbb{N}\}$. The detailed structure of the group $R$ is described in \cite{goryachko:2008}, \cite{goryachko_petrov:2010}. The representation of $R$ as an inductive limit (presented below) is taken from \cite{goryachko:2008}.

For each
$n,m\in \mathbb{N}$, define an embedding $ S(n)\rightarrow S(nm)$
 as  \begin{equation}\label{EquationParabolicEmbadding}
s\rightarrow s',\;\;s'(j)=ms([j/m])+(j\;mod\;m),\;s\in S(n),\;s'\in S(nm).\end{equation} These embeddings
are called {\it periodic}.   Observe that the map $s\rightarrow g_s$, $s\in S(n)$ and $g_s\in R$, is invariant under the periodic embeddings. We can introduce a partial ordering on $\mathbb N$ by saying that $n$ is less than $m$ if $n$ divides $m$.  The family of symmetric groups $\{S(n)\}_{n\in \mathbb N}$ is a directed family with respect to this partial ordering. The group $R$ is an inductive limit of this family with respect to the periodic embeddings, see \cite[Proposition 2]{goryachko:2008}.

 Observe that
each group $S(n)$ is embedded by a periodical embedding into a group $S(n!)$. This shows that $R$ is the inductive limit of the sequence
of groups $\{S(n!)\}_{n\geq 1}$ with respect to the periodic embeddings.

Let $B_R=(V,E)$ be a diagram such that (i) on each level $n\geq 0$, the diagram has exactly $n+1$ vertices and (ii) each vertex from $V_n$ is connected to each vertex of $V_{n+1}$, $n\geq 0$,
 exactly by one edge. In particular, $card(E_n)  = n(n+1)$.  The following figure gives  a diagrammatic representation of $B_R$.

\unitlength = 0.5cm
\begin{center}
\begin{graph}(10,7)
\graphnodesize{0.4}
%
\roundnode{V0}(5, 6)
\roundnode{V11}(3.5,4)
\roundnode{V12}(6.5,4)
%
\edge{V11}{V0}
\edge{V12}{V0}
%
\roundnode{V21}(1, 1)
\roundnode{V22}(5, 1)
\roundnode{V23}(9, 1)
%
\edge{V21}{V11}
\edge{V21}{V12}
\edge{V22}{V11}
\edge{V22}{V12}
\edge{V23}{V11}
\edge{V23}{V12}

\end{graph}
\vskip0.4cm
$.\ .\ .\ .\ .\ .\ .\ .\ .\ .\ .\ .\ .\ .\ .\ .$
\vskip0.5cm
Fig. 1
\end{center}

\begin{proposition} The full group $G$ of the Bratteli diagram $B_R$ is isomorphic to the group of rational permutations $R$.
\end{proposition}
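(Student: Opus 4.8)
The plan is to compare the two inductive-limit descriptions directly, using the explicit combinatorics of $B_R$. First I would record the data of the diagram: since $|V_n|=n+1$ and every incidence matrix $F_n$ consists entirely of $1$'s, one gets $h_v^{(n)}=n!$ for each $v\in V_n$, and a finite path from $v_0$ to a vertex $v\in V_n$ is exactly a choice of intermediate vertices $(u_1,\dots,u_{n-1})\in V_1\times\cdots\times V_{n-1}$ (with $v$ as the last vertex), a set of size $n!$ that is independent of $v$. Fixing once and for all the mixed-radix bijection $V_1\times\cdots\times V_{n-1}\cong\{0,1,\dots,n!-1\}$ defined recursively so that each newly appended coordinate is the \emph{least} significant digit, I obtain identifications $M_n^{(v)}\cong\{0,\dots,n!-1\}$, and hence $G_n^{(v)}\cong S(n!)$, that are uniform in $v$; thus $G_n\cong S(n!)^{n+1}$, with one factor $G_n^{(v)}$ for each $v\in V_n$.

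Next I would introduce the diagonal embeddings $\iota_n\colon S(n!)\hookrightarrow G_n$, $s\mapsto(s,\dots,s)$, and establish two facts. \textbf{(a)} For every $s\in S(n!)$ the element $\rho_{n+1}(\iota_n(s))\in G_{n+1}$ again lies in the diagonal, and the induced map $S(n!)\to S((n+1)!)$ is precisely the periodic embedding (\ref{EquationParabolicEmbadding}). \textbf{(b)} The diagonal subgroups exhaust $G$: already $\rho_{n+1}(G_n)\subseteq\iota_{n+1}(S((n+1)!))$, so $G=\bigcup_n G_n=\bigcup_n\iota_n(S(n!))$.

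Both (a) and (b) come from the same elementary observation: an element of $G_n$ rearranges only the first $n$ edges of a path, and in particular never consults the vertex $r(e_{n+1})$ at level $n+1$. Since, moreover, every $f_{v,w}^{(n+1)}=1$, the decomposition $M_{n+1}^{(w)}=\bigsqcup_{v\in V_n}M_n^{(v)}$ of (\ref{EquationEmbeddingGroups}) is independent of $w\in V_{n+1}$, so $\rho_{n+1}(g)$ acts identically on each factor $G_{n+1}^{(w)}$, i.e., it is a diagonal element; this gives (b) at once. For (a) one specializes to $g=\iota_n(s)$: on $M_{n+1}^{(w)}=\bigsqcup_v M_n^{(v)}$ the element acts as $\bigoplus_v s$ (the same $s$ inside each block, with no mixing of blocks), and declaring the new vertex $u_n$ to be the least significant digit of the level-$(n+1)$ label turns this into the map $j\mapsto(n+1)\,s\!\left(\left[\tfrac{j}{n+1}\right]\right)+\bigl(j\bmod(n+1)\bigr)$, which is exactly (\ref{EquationParabolicEmbadding}). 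I expect this last step --- reconciling the ad hoc periodic-embedding formula with the block structure of the diagram --- to be the only real point of friction, and it is precisely the recursive choice of mixed-radix labelling above that makes it fall out.

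Finally I would put the pieces together. By (a) the maps $(\iota_n)$ form a morphism from the directed system $\bigl(S(n!),\ \text{periodic embeddings}\bigr)$ to $\bigl(G_n,\ \rho_{n+1}\bigr)$; it is componentwise injective (a diagonal map into a direct product is injective) and, by (b), has cofinal image, so it induces an isomorphism of the colimits: $R=\varinjlim S(n!)\cong\varinjlim G_n=G$. I would close with the remark that this is the expected concrete identification: under the factorial-number-system map from $X$ onto $[0,1)$ --- a bijection off a countable set of terminating expansions --- the homeomorphism $\iota_n(s)$ of $X$ corresponds to the rational permutation $g_s$ of $[0,1)$, so that $G$ and $R$ are realized literally as the same transformation group.
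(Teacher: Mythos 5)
Your proof is correct and follows essentially the same route as the paper's: both rest on the mixed-radix labelling of finite paths (newest vertex as the least significant digit), under which the block structure of $B_R$ produces exactly the periodic embedding, and both conclude via the interleaving $\iota_n(S(n!))\subseteq G_n\subseteq \iota_{n+1}(S((n+1)!))$ of the two inductive systems. The only cosmetic difference is the direction of the comparison maps --- the paper embeds $G_n$ into $S((n+1)!)$ as the permutations preserving residues mod $n+1$, whereas you embed $S(n!)$ diagonally into $G_n\cong S(n!)^{n+1}$ --- but the underlying interleaving is identical.
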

\begin{proof} For each $n\geq 1$, enumerate vertices from $V_n$ by the numbers $0,1\ldots,n$.
Then each finite path $p$ starting at $v_0$ and terminating at a vertex from $V_n$
can be uniquely encoded by a sequence of numbers $(a_1,\ldots,a_n)$,
where the entry $a_j\in\{0,1\ldots,j\}$
represents the vertex from $V_j$ the path $p$ goes through.

 Denote by $X_n$ the set of all
finite paths terminating at the level $n$. Let $G_n$ be the subgroup of the full group $G$ consisting of homeomorphisms that only act on $X_n$. Define a bijection $i_n: X_n\rightarrow \{0,1,\ldots,(n+1)!-1\}$ by
$$i_n(a_1,\ldots,a_n) =
a_1\frac{(n+1)!}{2!}+a_2\frac{(n+1)!}{3!}+\ldots+a_n.$$ The bijection $i_n$
induces an embedding $\varphi_n$ of $G_n$ into $S((n+1)!)$. The image
$\varphi_n(G_n)$ consists
of all permutations that do not change the remainder modulo
$n+1$. It follows that $\varphi_n(G_n)$ contains the image of $S(n!)$ in $S((n+1)!)$
under the periodic embedding. Observe that the embeddings $\varphi_n$ commute with the
embeddings $S(n!)\rightarrow S((n+1)!)$ and $G_n\rightarrow G_{n+1}$. This implies
that the inductive limit of the groups $G_n$ is isomorphic to the inductive limit
of the groups $S(n!)$.
\end{proof}

For each pair of vertices $(v,w)\in V_n\times V_m$, $n<m$, denote by $f_{v,w}^{(n,m)}$ the number of finite paths connecting $v$ to $w$. Similarly, let $f_w^{(m)}$ stand for the number of paths from the top vertex $v_0$ to the vertex $w$.  Let $\lambda$ be a $G$-invariant ergodic measure on the diagram $B_R$. By the pointwise ergodic theorem there exists an infinite path $x$ such that the measure of any cylinder set $U = U(e_1,\ldots,e_n)$ can be found by

$$\lambda(U) = \lim_{m\to\infty} f_{v,w_m}^{(n,m)} / f_{w_m}^{(m)}, $$ where $w_m$ is the vertex of level $m$ the path $x$ goes through and $v$ is the vertex of level $n$ the cylinder set $U$ terminates at,  see the details in \cite[Theorem 2]{vershik_kerov:1981} or \cite[Proposition 5.1]{bezuglyi_kwiatkowski_medynets_solomyak:2010}.
 Due to the structure of the diagram $B_R$, the quantity $f_{v,w}^{(n,m)} / f_w^{(m)}$ does not depend on the choice of vertices $v$ and $w$. Therefore, $\lambda$ is the only $G$-invariant measure. Thus, the following result is an immediate corollary of Theorem \ref{TheoremFormulaCharacters}.

\begin{corollary}[Goryachko and Petrov]\label{CorollaryCharsOnR} The functions
 $\chi_k(g)=\lambda(Fix(g))^k$,
$k\in \mathbb{N}\cup\{0,\infty\}$, $g\in R$, are the only  indecomposable characters of the group $R$ of rational permutations of the unit interval.
\end{corollary}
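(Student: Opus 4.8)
The plan is to derive the corollary directly from Theorem \ref{TheoremFormulaCharacters} and the constructions of Section \ref{SectionExampleRepresentations}; the only real work is to check that $R$, viewed as the full group of $B_R$, satisfies the hypotheses of those results, namely simplicity together with the fact that its path space carries a unique ergodic measure.

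By the preceding proposition, $R$ is isomorphic to the full group $G$ of $B_R$, so it suffices to treat $G$. The diagram $B_R$ is simple, since every vertex of level $n$ is joined to every vertex of level $n+1$ by an edge; hence the $G$-action on $X = X_{B_R}$ is minimal and, since every vertex of level $n \ge 2$ receives at least two edges, $X$ is a Cantor set. Between levels $n$ and $m$ any two vertices are joined by $(n+2)(n+3)\cdots m$ finite paths, a number that is even as soon as $m \ge n+3$; telescoping $B_R$ between two such levels shows that every cylinder set, and therefore every clopen subset of $X$, splits into two $G$-equivalent clopen pieces. By Proposition \ref{PropositionSimpleDiagrams}(2) this means that $G = R$ is simple. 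Finally, the computation carried out just before the statement shows that $\lambda$ is the \emph{only} $G$-invariant probability measure, so the set of ergodic measures is the singleton $\{\lambda\}$.

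With these facts in hand I would apply Theorem \ref{TheoremFormulaCharacters} with the single measure $\lambda$: every indecomposable character $\chi$ of $R$ satisfies $\chi(g) = \lambda(Fix(g))^{k}$ for some $k \in \{0,1,\dots,\infty\}$, the value $k = \infty$ being the regular character (for $g \ne \mathrm{id}$ the clopen set $supp(g)$ is nonempty, hence $\lambda(Fix(g)) < 1$ since $\lambda$ has full support). Thus $\chi = \chi_{k}$, which gives one inclusion. For the reverse inclusion I would invoke Section \ref{SectionExampleRepresentations}: for each $k \in \mathbb{N} \cup \{0\}$ the function $\chi_k(g) = \lambda(Fix(g))^{k}$ is an indecomposable character of the simple full group $G$ by the theorem proved there (applied with the single measure $\lambda$ and exponent $\alpha_1 = k$), while $\chi_\infty$ is the regular character, whose indecomposability is established in the same section. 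Hence the functions $\chi_k$, $k \in \mathbb{N} \cup \{0, \infty\}$, are precisely the indecomposable characters of $R$. The one step that genuinely requires an argument is the verification that $R$ is simple (equivalently, that $B_R$ telescopes to an even diagram); everything else is an assembly of results already at our disposal.
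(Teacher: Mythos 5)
Your proof is correct and follows essentially the same route as the paper: reduce to the full group of $B_R$, observe unique ergodicity from the path-count computation, and apply Theorem \ref{TheoremFormulaCharacters} together with the indecomposability results of Section \ref{SectionExampleRepresentations}. Your explicit check that $B_R$ telescopes to an even diagram, so that $R$ is simple via Proposition \ref{PropositionSimpleDiagrams}, supplies a hypothesis of Theorem \ref{TheoremFormulaCharacters} that the paper leaves unverified, and is a welcome addition.
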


\begin{remark} Note that $\chi_0$ and $\chi_\infty$ in the corollary above are the identity character and  the regular character, respectively.
\end{remark}

\subsection*{Acknowledgement.}
 The second-named author would like to acknowledge the support and hospitality of Erwin Schr\"odinger Institute for Mathematical Physics in Vienna.

%
%

%

\end{document}